\DeclareSymbolFont{slenderlargesymbols}{OMX}{ccex}{m}{n}
\DeclareMathSymbol{\prod}{\mathop}{slenderlargesymbols}{"51}
\tikzstyle{vertex}=[circle,black, fill=black, draw, inner sep=0pt, minimum size=6pt]
\definecolor{cof}{RGB}{219,144,71}
\definecolor{pur}{RGB}{186,146,162}
\definecolor{greeo}{RGB}{91,173,69}
\definecolor{greet}{RGB}{52,111,72}
\pgfplotsset{compat=1.14}
\newcommand*{\email}[1]{\texttt{#1}}
\providecommand{\keywords}[1]
{
  \small	
  \textbf{Keywords:} #1
}
\providecommand{\classificationnumbers}[1]
{
  \small	
  \textbf{2020 AMS Mathematics Subject Classification:} #1
}
\providecommand{\customgenericname}{}
\newcommand{\newcustomtheorem}[2]{%
  \newenvironment{#1}[1]
  {%
   \renewcommand\customgenericname{#2}%
   \renewcommand\theinnercustomgeneric{##1}%
   \innercustomgeneric
  }
  {\endinnercustomgeneric}
}
\newtheorem{thm}{Theorem}
\newtheorem{cor}[thm]{Corollary}
\newtheorem{ex}[thm]{Example}
\newcommand{\R}{\mathbb{R}}
\newcommand{\E}{\mathbb{E}}
\newcommand{\Tcal}{{\mathcal{T}}}
\newcommand{\Ccal}{\mathcal{C}}
\newcommand{\Rcal}{\mathcal{R}}
\newcommand{\Fcal}{\mathcal{F}}
\newcommand{\Scal}{\mathcal{S}}
\newcommand{\Bcal}{\mathcal{B}}
\newcommand{\Dcal}{\mathcal{D}}
\newcommand{\Ncal}{\mathcal{N}}
\newcommand{\Acal}{\mathcal{A}}
\newcommand{\Pcal}{\mathcal{P}}
\newcommand{\Lcal}{\mathcal{L}}
\newcommand{\vol}{\mathrm{vol}}
\newcommand{\Cfrak}{\mathfrak{C}}
\DeclareMathOperator*{\argmax}{arg\,max}
\DeclareMathOperator*{\argmin}{arg\,min}
\DeclareMathOperator*{\conv}{conv}
\DeclareMathOperator*{\rank}{rank}
\DeclarePairedDelimiterX{\infdivx}[2]{(}{)}{%
  #1\;\delimsize\|\;#2%
}
\newtheorem{theorem}{Theorem}
\newtheorem{lemma}{Lemma}
\newtheorem{corollary}{Corollary}
\newtheorem{proposition}{Proposition}
\newtheorem{definition}{Definition}
\newcommand\ShortTitle{Optimal Regularization}
\newcommand\Author{O. Leong, E. O'Reilly, Y.S. Soh, V. Chandrasekaran}
\let\Title\@title
\def\ps@mystyle{%
      \def\@oddfoot{\hfill\thepage\hfill\makebox[0pt][l]{}}
      \def\@evenfoot{\hfill\thepage\hfill\makebox[0pt][l]{}}
      \def\@oddhead{%
       \ifodd\value{page}\relax
          \hfill\Author\hfill\makebox[0pt][l]{}%
      \else
          \makebox[0pt][l]{}\hfill\ShortTitle\hfill%
      \fi%
      }%
      \let\@mkboth\markboth}
\begin{document}
\title{Optimal Regularization for a Data Source}

\author[1]{Oscar Leong\thanks{Correspondence email: \email{oleong@stat.ucla.edu}}}
\author[2]{Eliza O'Reilly}
\author[3]{Yong Sheng Soh}
\author[4]{Venkat Chandrasekaran}
\affil[1]{Department of Statistics and Data Science, UCLA}
\affil[2]{Department of Applied Mathematics and Statistics, Johns Hopkins University}
            
\affil[3]{Department of Mathematics, National University of Singapore}

\affil[4]{Department of Computing and Mathematical Sciences and Electrical Engineering, Caltech}
\date{December 29, 2022,\ Revised: September 10, 2024}
\maketitle

\begin{abstract}
In optimization-based approaches to inverse problems and to statistical estimation, it is common to augment criteria that enforce data fidelity with a regularizer that promotes desired structural properties in the solution.  The choice of a suitable regularizer is typically driven by a combination of prior domain information and computational considerations.  Convex regularizers are attractive computationally but they are limited in the types of structure they can promote.  On the other hand, nonconvex regularizers are more flexible in the forms of structure they can promote and they have showcased strong empirical performance in some applications, but they come with the computational challenge of solving the associated optimization problems.  In this paper, we seek a systematic understanding of the power and the limitations of convex regularization by investigating the following questions: Given a distribution, what is the optimal regularizer for data drawn from the distribution? What properties of a data source govern whether the optimal regularizer is convex?  We address these questions for the class of regularizers specified by functionals that are continuous, positively homogeneous, and positive away from the origin.  We say that a regularizer is optimal for a data distribution if the Gibbs density with energy given by the regularizer maximizes the population likelihood (or equivalently, minimizes cross-entropy loss) over all regularizer-induced Gibbs densities.  As the regularizers we consider are in one-to-one correspondence with star bodies, we leverage dual Brunn-Minkowski theory to show that a radial function derived from a data distribution is akin to a ``computational sufficient statistic'' as it is the key quantity for identifying optimal regularizers and for assessing the amenability of a data source to convex regularization.  Using tools such as $\Gamma$-convergence from variational analysis, we show that our results are robust in the sense that the optimal regularizers for a sample drawn from a distribution converge to their population counterparts as the sample size grows large. Finally, we give generalization guarantees for various families of star bodies that recover previous results for polyhedral regularizers (i.e., dictionary learning) and lead to new ones for a variety of classes of star bodies.
\end{abstract}

\keywords{Brunn-Minkowski theory, convex bodies, $\Gamma$-convergence, star bodies, statistical learning, variational analysis}

\classificationnumbers{49, 52, 90}

\section{Introduction}
\label{sec:intro}


Balancing computational considerations and accurate modeling of various phenomena of interest is a fundamental and long-standing challenge in data science. In the fields of statistical estimation and inverse problems, this challenge manifests itself most prominently in the choice of an appropriate regularization functional for a data source.  Regularizers are commonly employed to augment data fidelity criteria in optimization-based methods for inverse problems, and they are used to promote desired structural properties in the solution.  Popular forms of structure for which regularization is routinely employed include smoothness, sparsity, matrices or tensors with small rank, and many others.  As inverse problems are often ill-posed due to the available data being noisy or incomplete, the selection of a suitable regularizer is crucial to obtaining high-quality solutions.  In deriving a regularizer that promotes a desired structure, a central goal is that of obtaining a computationally tractable optimization formulation.

The core property of an optimization problem that governs its computational tractability is whether the problem is convex or not.  Indeed, as articulated by Rockafellar \cite{Rockafellar93} some decades ago ``... the great watershed in optimization isn't between linearity and nonlinearity, but convexity and nonconvexity.''  Convex optimization problems can be solved reliably to global optimality, and they come with the rich toolkit of convex analysis that provide convergence guarantees and bounds on sensitivity to perturbations of the underlying problem data.  These attributes have significant consequences in the context of inverse problems; for example, in medical imaging applications solution reliability and robustness are of critical importance.  As such, the design of convex regularizers\footnote{Nonlinear inverse problems can potentially give rise to nonconvex data fidelity terms.  We assume for simplicity that such data fidelity terms are convex, and that they yield convex optimization problems when augmented with convex regularizers.} that promote a desired structure is a question that has been studied for many decades \cite{StatLearnSparsityBook}.  However, convex regularizers are limited in the types of structure they can promote.  As a result, there has been a lot of interest in recent years in the incorporation of nonconvex regularizers in solution methods for inverse problems, and the resulting nonconvex optimization formulations have showcased strong empirical performance in some applications \cite{Arridgeetal19}.  Nonetheless, these approaches come with few guarantees about solution quality, robustness to data perturbations, and convergence of numerical solution schemes.

Despite substantial literature devoted to the topic of regularizer selection, we still do not have a systematic understanding of the power and the limitations of convex regularization for general data sources.  For example, it is unclear whether there exist data sources for which a convex regularizer is optimal and one need not sacrifice computational efficiency.  In this paper, we take first steps towards addressing these larger objectives by formulating and answering the following specific questions: Given a data source specified as a probability distribution, what is the optimal regularizer that promotes the structure contained in that data?  Are there data sources for which this optimal regularizer is convex?  What are the ``sufficient statistics'' associated with a data source that govern the choice of the optimal regularizer?

\subsection{A Maximum-Likelihood Criterion for Optimal Regularizer Selection} \label{sec:MLE-criterion}

To make the preceding conceptual questions precise, we begin by fixing a family of functionals over which we identify optimal regularizers.  Whether convex or not, a regularizer ought to satisfy basic regularity properties that lead to well-behaved optimization formulations for inverse problems.  To this end, we consider the collection of functionals on $\R^d$ that are $(i)$ positive except at the origin; $(ii)$ positively homogeneous { of some degree $\alpha > 0$}; and $(iii)$ continuous.  These assumptions are commonplace in the context of inverse problems; in particular, positivity is natural as a regularizer serves as a penalty function to promote a desired structure, positive homogeneity aligns with the idea that we often wish to solve inverse problems at varying levels of signal-to-noise ratio using the same regularizer, and finally, continuity is appealing computationally as it yields well-behaved optimization formulations.  Most regularizers employed in the literature in inverse problems satisfy these three conditions. { For ease of exposition and simplicity of results, we will focus on positive homogeneity of degree $\alpha = 1$, but will discuss how our results extend to the general $\alpha > 0$ case in Section \ref{sec:pos-hom-general-degree}.}


A simple but important consequence of these three assumptions is that the collection of functionals satisfying these conditions is in one-to-one correspondence with the collection of star bodies in $\R^d$; we call a set $K \subseteq \R^d$ a \emph{star body} if it is compact with non-empty interior and for each $x \in \R^d \backslash \{0\}$ the ray $\{\lambda x : \lambda > 0\}$ intersects the boundary of $K$ exactly once. Given a star body $K \subseteq \R^d$, the regularizer associated with this star body is given by its gauge function (also called the Minkowski functional of $K$):
\begin{equation}
\|x\|_K = \inf\{t > 0 ~:~ x \in t \cdot K\}. \label{eq:gauge}
\end{equation}
This regularizer is nonconvex for general star bodies $K$ and is convex if and only if $K$ is convex.  The $1$-sublevel set of the regularizer is $K$.  See Section~\ref{sec:preliminaries} for background on star bodies.


With this formalization, we describe next what we mean by a regularizer being optimal for a given data source.  As data sources in this paper are characterized by probability distributions, a natural approach to obtaining an optimal regularizer is to map our collection of regularizers to a family of densities and to identify the regularizer for which the associated density maximizes the population likelihood.  Specifically, we associate a regularizer $\|\cdot\|_K$ given by the gauge of a star body $K \subseteq \R^d$ to the Gibbs density $p_K(x) \propto \exp\left(-\|x\|_K\right)$, which yields the following family of densities:
\begin{align*}
    \mathcal{D} := \left\{p_K : \R^d \rightarrow \R ~|~ p_K(x) := e^{-\|x\|_K}/Z_K,\ Z_K := \int_{\R^d} e^{-\|x\|_K} \mathrm{d} x < \infty,\ K\ \text{is a star body} \right\}.
\end{align*}
Note that this class is quite expressive, as it includes log-concave densities with convex gauges as energy functions as well as a wide class of nonconvex energies.  For a data distribution $P$ on $\R^d$, the negative population log-likelihood for any $p_K \in \mathcal{D}$ is given by\begin{align*}
    \E_P[-\log p_K(x)] = \E_P[\|x\|_K] + \log Z_K.
\end{align*}
The first term is the average value that the star body gauge assigns to points drawn according to the distribution $P$, and the second term corresponds to the normalizing constant.  This normalization is typically intractable to evaluate for general Gibbs densities \cite{GelfandSmith98}, but in our case it has an appealing geometric characterization in terms of the volume of the star body $K$ as described in the following result.



\begin{proposition}
    For any star body $K \subset \R^d$, we have \begin{align*}
        \int_{\R^d} e^{-\|x\|_K} \mathrm{d} x = \vol_d(K) \cdot \Gamma(d+1)
    \end{align*} where $\Gamma(z) := \int_0^{\infty} t^{z-1} e^{-t} \mathrm{d} t$ is the Gamma function.
\end{proposition}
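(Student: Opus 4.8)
The plan is to compute the integral by the layer-cake (Cavalieri) formula, reducing it to a one-dimensional integral that is recognizably a Gamma function. Since $\|\cdot\|_K$ is continuous, the map $x \mapsto e^{-\|x\|_K}$ is a non-negative measurable function, so Tonelli's theorem gives
\begin{align*}
    \int_{\R^d} e^{-\|x\|_K}\, \mathrm{d}x = \int_{\R^d} \int_0^{\infty} \mathbbm{1}\{t < e^{-\|x\|_K}\}\, \mathrm{d}t\, \mathrm{d}x = \int_0^{1} \vol_d\!\left(\{x \in \R^d : \|x\|_K < -\log t\}\right) \mathrm{d}t,
\end{align*}
where the outer integral runs only over $t \in (0,1)$ because $e^{-\|x\|_K} \le 1$ everywhere.

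The key geometric step is to identify the sublevel sets of the gauge. By positive homogeneity of $\|\cdot\|_K$ and the fact (noted in the excerpt) that $K$ is exactly the $1$-sublevel set of $\|\cdot\|_K$, we have for any $s > 0$ that $\|x\|_K < s \iff \|x/s\|_K < 1 \iff x/s \in \interior K$, so $\{x : \|x\|_K < s\} = s \cdot \interior K$ up to a Lebesgue-null set, and in particular $\vol_d(\{x : \|x\|_K < s\}) = s^d\, \vol_d(K)$ by the scaling behavior of Lebesgue measure. (Here $\vol_d(K) < \infty$ since $K$ is compact, which incidentally also establishes $Z_K < \infty$.) Substituting with $s = -\log t$ yields
\begin{align*}
    \int_{\R^d} e^{-\|x\|_K}\, \mathrm{d}x = \vol_d(K) \int_0^1 (-\log t)^d\, \mathrm{d}t.
\end{align*}

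Finally, the change of variables $u = -\log t$, i.e. $t = e^{-u}$ and $\mathrm{d}t = -e^{-u}\, \mathrm{d}u$, transforms $\int_0^1 (-\log t)^d\, \mathrm{d}t$ into $\int_0^{\infty} u^d e^{-u}\, \mathrm{d}u = \Gamma(d+1)$, which completes the proof. I do not anticipate a genuine obstacle here; the only points requiring a word of care are the measure-theoretic justification for interchanging the order of integration (handled by Tonelli, since the integrand is non-negative) and the remark that the boundary $\partial(sK)$ is Lebesgue-null so that the strict and non-strict sublevel sets have the same volume — both are routine. As an alternative one could instead pass to polar coordinates $x = r\theta$ and use the identity $\vol_d(K) = \tfrac1d \int_{S^{d-1}} \rho_K(\theta)^d\, \mathrm{d}\theta$ for the radial function $\rho_K$, reducing the radial integral to $\Gamma(d)$ and recovering the same constant $d\,\Gamma(d) = \Gamma(d+1)$; the layer-cake argument above, however, avoids invoking the polar volume formula.
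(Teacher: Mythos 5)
Your proof is correct and follows essentially the same route as the paper's: both are Tonelli/Cavalieri arguments that reduce to the one-dimensional Gamma integral via the volume scaling $\vol_d(sK)=s^d\vol_d(K)$. The only (cosmetic) difference is the choice of decomposition: you slice horizontally, writing $e^{-\|x\|_K}=\int_0^1 \mathbbm{1}\{t<e^{-\|x\|_K}\}\,\mathrm{d}t$ and then substituting $u=-\log t$, whereas the paper writes $e^{-\|x\|_K}=\int_{\|x\|_K}^\infty e^{-t}\,\mathrm{d}t=\int_0^\infty \mathbbm{1}\{\|x\|_K\le t\}e^{-t}\,\mathrm{d}t$, which lands directly on $\int_0^\infty e^{-t}\vol_d(tK)\,\mathrm{d}t$ without the extra change of variables. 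Your attention to the strict-versus-nonstrict sublevel set issue and to Tonelli's justification is fine but not strictly necessary at the level of rigor the paper adopts; the polar-coordinate alternative you sketch is also a standard valid route.
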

\begin{proof}
    The result follows from elementary integration. Let $\mathbf{1}_{E}$ denote the indicator function on the event $E$. Recall that for any star body $K \subseteq \R^d$, we have $\int_{\R^d} \mathbf{1}_{\{x \in K\}} \mathrm{d} x = \vol_d(K)$ and $\vol_d(tK) = t^d\vol_d(K)$ for $t \geqslant 0$. Then observe that \begin{align*}
        \int_{\R^d} e^{-\|x\|_K} \mathrm{d} x  & = \int_{\R^d} \left(\int_{\|x\|_K}^{\infty} e^{-t}\mathrm{d} t\right) \mathrm{d} x 
        = \int_{\R^d} \left(\int_{0}^{\infty} \mathbf{1}_{\{\|x\|_K \leqslant t\}} e^{-t}\mathrm{d} t\right) \mathrm{d} x \\
        & = \int_{0}^{\infty}e^{-t}\left(\int_{\R^d} \mathbf{1}_{\{\|x\|_K \leqslant t\}} \mathrm{d} x \right)\mathrm{d} t =\int_0^{\infty}e^{-t} \vol_d(tK) \mathrm{d} t = \vol_d(K) \int_0^{\infty} t^d e^{-t} \mathrm{d} t.
        \end{align*}
\end{proof}

{ This result shows that minimizing the negative population log-likelihood objective $\E_P[-\log p_K(x)]$ over $p_K \in \mathcal{D}$ is equivalent to minimizing the following functional over all star bodies $K \subseteq \R^d$
\begin{align}
    K \mapsto \E_P[\|x\|_K] + \log \vol_d(K). \label{eq:MLE-objective}
\end{align}}
{There are additional observations one can make regarding this functional to arrive at a simpler, yet equivalent, formulation to maximum likelihood. For any fixed star body $K$, we consider the optimal positive dilate $\lambda$ that minimizes $\mathbb{E}_P[\|x\|_K] + \log\vol_d(K)$, i.e., we wish to solve $\min_{\lambda > 0} ~ \mathbb{E}_P[\|x\|_{\lambda K}] + \log\vol_d(\lambda K)$.  By noting that $\mathbb{E}_P[\|x\|_{\lambda K}] = \lambda^{-1} \mathbb{E}_P[\|x\|_K]$ and that $\vol_d(\lambda K) = \lambda^d \vol_d(K)$, we conclude from single-variable calculus that the expression $\lambda^{-1} \mathbb{E}_P[\|x\|_K] + \log\left(\lambda^d \vol_d(K)\right)$ is minimized when $\lambda_K := \E_P[\|x\|_K] / d$.  One can check that $\mathbb{E}_P[\|x\|_{\lambda_K K}] + \log\vol_d(\lambda_K K)$ is equal to $\log\left(\E_P\left[\|x\|_{K/\vol_d(K)^{1/d}}\right]\right)$ up to additive constants and multiplicative factors that do not depend on $K$. As $K/\vol_d(K)^{1/d}$ has unit volume, we conclude that in the minimization of the functional \eqref{eq:MLE-objective} over all star bodies $K$, it suffices to only consider unit-volume star bodies.  Consequently, we arrive at the following natural optimization problem:
\begin{equation}
\argmin_{\substack{K \subseteq \R^d, ~ \vol_d(K) = 1, \\ K \text{ is a star body}}} ~ \E_P[\|x\|_K] \label{eq:star-body-min}
\end{equation}
which is the main problem of interest in the paper.}

Solving the optimization problem \eqref{eq:star-body-min} entails many challenges as the space of star bodies is infinite-dimensional with its own intricate topology.  In particular, characterizing basic properties about the problem, such as existence and uniqueness of minimizers, requires an analysis of the map $K \mapsto \E_P[\|x\|_K]$ on the space of star bodies. To make progress on these challenges, we leverage concepts and ideas from the field of star geometry, with dual Brunn-Minkowski theory playing a prominent role \cite{Lutwak1975}.  In particular, as we show in Section \ref{sec:Opt_starbody_regularizers}, the expected gauge $\E_P[\|x\|_K]$ can be written as a \textit{dual mixed volume} between the star body $K$ and a special star body induced by the data distribution $P$, which we call $L_P$.  The star body $L_P$ serves as a ``computational sufficient statistic'' that summarizes all the relevant information about $P$ for the selection of a regularizer.  A virtue of this geometric characterization of $\E_P[\|x\|_K]$ is that the dual mixed volume between $K$ and $L_P$ can be bounded below by the product of the volumes of $K$ and $L_P$, with equality holding if and only if $K$ is a (positive) dilate of $L_P$, i.e., $K = \lambda L_P$ for some $\lambda > 0$ \cite{Lutwak1975}.

This problem \eqref{eq:star-body-min} constitutes the central mathematical object of interest in our paper.  We show in Section \ref{sec:Opt_starbody_regularizers} that under appropriate conditions the volume-normalized $L_P$ is the unique solution  to the above optimization problem.  In Sections~\ref{sec:general-existence-results}--\ref{sec:posterior-persp}, we further analyze variational, statistical, and learning-theoretic aspects of this problem; see Section~\ref{sec:our-contributions} for a summary of our contributions.


From an information-geometric perspective, our maximum likelihood criterion for regularizer selection is equivalent to computing a \textit{moment projection} (or $m$-projection) \cite{amari2000methods} of the data distribution $P$ onto the family of distributions $\overline{\mathcal{D}} := \{p_K \in \mathcal{D} : Z_K = \Gamma(d+1)\}.$  Assuming $P$ is absolutely continuous with respect to any $p_K \in \overline{\mathcal{D}}$ (i.e., $P \ll p_K$ for any $p_K \in \overline{\mathcal{D}})$, we have the following interpretation of the problem \eqref{eq:star-body-min}:
\begin{align*}
    \min_{p_K \in \overline{\mathcal{D}}} D_{\mathrm{KL}}(P || p_K) \Leftrightarrow \min_{p_K \in \overline{\mathcal{D}}} \int_{\R^d}\log\left(\frac{\mathrm{d}P}{\mathrm{d}P_K}(x)\right)\mathrm{d}P(x) \Leftrightarrow \max_{p_K \in \overline{\mathcal{D}}} \int_{\R^d} \log p_K(x) \mathrm{d}P(x) \Leftrightarrow \min_{p_K \in \overline{\mathcal{D}}} \mathbb{E}_P[\|x\|_K]
\end{align*}
where $P_K$ is the measure with density $p_K$. The minimizer $p^*_{K} := \argmin_{p_K \in \overline{\mathcal{D}}} D_{\mathrm{KL}}(P || p_K)$ is known as the $m$-projection of $P$ onto the family $\overline{\mathcal{D}}$, and it is given by the Gibbs density induced by the star body that solves the optimization problem \eqref{eq:star-body-min}.



\subsection{Our Contributions} \label{sec:our-contributions}

In Section~\ref{sec:minimizers-star-bodies}, we identify the solution of \eqref{eq:star-body-min} using techniques from dual Brunn-Minkowski theory \cite{Lutwak1975}.  This branch of geometric functional analysis extends classical results in Brunn-Minkowski theory concerning mixed volumes of convex bodies \cite{conv-bodies-Schneider} to the class of star bodies; we show that the objective in \eqref{eq:star-body-min} may be viewed as a dual intrinsic volume, and this interpretation yields a precise characterization of the optimal regularizer.  Given a data distribution that has a density with respect to the Lebesgue measure, we show that a radial function induced by the distribution's density provides a kind of ``summary statistic'' for the data that provides all the information required to obtain the optimal regularizer for that data; in particular, this radial function allows us to characterize when the optimal regularizer is convex.  Next, we establish the existence of minimizers of \eqref{eq:star-body-min} in Section~\ref{sec:general-existence-results} for a broader class of distributions than those considered in Section~\ref{sec:minimizers-star-bodies}.  Along the way, we describe a compactness result akin to Blaschke's Selection Theorem for ``well-conditioned'' star bodies.

In Section~\ref{sec:convergence-of-minimizers}, we show that our results are robust in the sense that the optimal regularizers for a sample drawn from a distribution converge to their population counterparts as the sample size grows large.  Our approach to establishing this result is based on tools from variational analysis, such as $\Gamma$-convergence \cite{Braides-Gamma-Handbook}, which we use to establish uniform convergence of the objective in \eqref{eq:star-body-min} as the sample size grows large.  These tools also allow us to show that star bodies learned on noisy data converge to star bodies learned on uncorrupted data as the noise level goes to zero (see Section \ref{sec:robustness}).

Finally, in Sections \ref{sec:stat-learning} and \ref{sec:posterior-persp}, we describe additional results on the statistical learning of star body regularizers and how our optimal regularizer characterization has consequences for inverse problems. Section \ref{sec:stat-learning} builds upon results in Sections \ref{sec:minimizers-star-bodies} -- \ref{sec:convergence-of-minimizers} for learning regularizers from certain parametrized families of star bodies. We prove a general uniform convergence result that gives a bound on generalization error as a function of the metric entropy of the family of bodies being considered. We then apply this result to several classes of interest, including star bodies given as unions of convex bodies, ellipsoids, polytopes, and linear images of Schatten-$p$ norm balls. Several of these generalization bounds are novel, while others recover known results in the literature for problems such as dictionary learning. Section \ref{sec:posterior-persp} discusses how our results have consequences for posterior sampling, a crucial ingredient for Bayesian inference in inverse problems.

\subsection{Related Work}

\subsubsection{Regularizers Derived from Domain Expertise}

The traditional paradigm in solving inverse problems incorporates the use of hand-crafted regularization functionals to promote structure, a practice dating as far back as Tikhonov regularization \cite{TikReg} and the Nyquist sampling theorem \cite{NyquistShannonSampling}. For instance, natural images have been observed to be compressible, or approximately sparse, under the wavelet or the discrete cosine transform. Subsequently, regularization functionals such as the $\ell_1$-norm in the transformed basis, have shown to be effective at inducing sparse structure and is computationally tractable to optimize over owing to its convexity \cite{Daubechiesetal04, Tao2006, Donoho2006, Tibshirani94}. A prominent convex regularizer promoting smoothness is total variation regularization \cite{Rudinetal92}, which also includes generalizations incorporating higher-order derivatives \cite{ChambolleLions, BrediesKunischPock}. Other examples of hand-crafted convex regularization techniques include the nuclear norm for low-rankness \cite{FazelThesis, CandesRecht09, Rechtetal10} as well as entropic regularization \cite{Eggermont93}. Several of these regularizers also fall under the wider class of atomic norm regularizers \cite{Chandrasekaranetal12a, BhaskarRecht13, Tangetal13, Shahetal12, OymakHassibi}. We also highlight the rich use of Gaussian priors \cite{Stuart-BIP, Knapiketal11, DashtiStuart, GiordanoNickl, Monardetal21}, which have been used extensively in infinite-dimensional inverse problems for regularization and uncertainty quantification.

While utilizing convex regularizers has computational benefits for optimization, there has been a recent surge of interest in the incorporation of nonconvex regularization functionals. For example, returning to sparsity as a model for structure, several approaches have proposed the use of $\ell_p$ regularizers for $p \in [0,1)$ \cite{FoucartLai09, Sun12, Mohimanietal08} or other nonconvex approximations to the $\ell_1$-norm \cite{YaoKwok18, Selesnick17, FanLi2001, Wangetal14, Zhang10, PieperPetrosyan22}. Please see \cite{BenningBurger18} for a comprehensive overview of hand-crafted regularizers for inverse problems. A significant challenge in this paradigm, however, is that choosing an appropriate regularizer for an inverse problem requires precise domain knowledge or expertise. Moreover, for general data sources, it is difficult to make claims regarding the optimality of such regularizers.

\subsubsection{Regularizers Learned from Data}

A dominant paradigm in recent years in designing regularizers has been to take a data-driven approach by directly \textit{learning} a regularization functional from example data. This framework has the advantage that the learned regularizer is particular to the data distribution of interest, and can help ameliorate a lack of domain knowledge or expertise. One prominent instantiation of this is given by learned convex regularizers from dictionary learning or sparse coding (see \cite{OlshausenField96, OlshausenField97, Spielmanetal12, Aharonetal2006, Aroraetal15, Agarwaletal16, Agarwaletal17, Schnass14, Schnass16, Baraketal15} and the surveys \cite{MairaletalSurvey14, EladSurvey10} for more). Given a training set of signals of interest, this class of methods computes a basis such that each example can be sparsely represented in this new basis. Once learned, such a basis can be incorporated in a regularization functional for subsequent tasks by enforcing that reconstructed data should also be sparsely represented in this basis. Geometrically, this regularizer constructs a polytope (whose extreme points are given by the basis elements) such that datapoints of interest should lie on its low-dimensional faces. Extensions of such an approach have also been investigated for particular types of bases, such as convolutional dictionaries \cite{Papyanetal17, Wohlberg15, GarciaCardonaWolhberg18}, and those representable by semidefinite programming using an infinite collection of basis elements \cite{SohChandrasekaran}. Such regularizers are convex, but there have also been many approaches incorporating more complicated nonconvex structures for data, such as learned regularizers parametrized by deep neural networks. Such approaches have showcased strong empirical performance and have been empirically observed to out-perform handcrafted regularizers in a range of tasks \cite{Lunzetal18, venkatakrishnan2013pnp, romano2017RED, Boraetal17, Kobleretal20, HabringHoller22, ReehorstSchniter19}. For a comprehensive survey on data-driven regularizers for inverse problems, please see the survey \cite{Arridgeetal19}. 

Despite this empirical success of data-driven regularizers, there is a lack of an overarching understanding as to why such approaches better capture underlying data geometry than handcrafted approaches. For example, when and why is it the case that certain data distributions can be captured or modeled by sparsity in a learned basis? On the other hand, which data sources should instead be modeled by nonconvex regularizers and how much information is lost by restricting oneself to a family of convex regularizers? These are the types of questions that we investigate in the present paper.

\subsubsection{Characterizing Optimal Regularizers}

To our knowledge, there are few works that consider the optimality of a regularizer for a given dataset. For example, in the area of dictionary learning, most prior work and performance guarantees analyze convergence guarantees of learning a synthetically generated, planted basis \cite{Agarwaletal16, Sunetal16I, Sunetal16II, ChatterjiBartlett17}, but these works do not consider the question of studying why a polyhedral regularizer is the ``best choice'' as a regularizer for a given dataset. A recent work that has tackled the question of identifying an optimal regularizer is \cite{Traonmilinetal-21}. Here, the authors consider signal recovery problems where the underlying signal has intrinsic low-dimensional structure. Given linear measurements of the signal, the goal is to characterize the optimal convex regularizer for the inverse problem. The authors introduce several notions of optimality for convex regularizers (dubbed compliance measures) and establish that canonical low-dimensional models, such as the $\ell_1$-norm for sparsity, are optimal for sparse recovery under such compliance measures. The results in this work, however, focus exclusively on optimality of convex regularizers for a given low-dimensional model of the underlying data source. In contrast, our results can prove optimality of both convex and nonconvex regularizers for general data sources modeled as distributions.

\section{Preliminaries} \label{sec:preliminaries}

In this section, we provide relevant background concerning convex bodies and star bodies. For a more detailed treatment of each subject, we recommend \cite{conv-bodies-Schneider} for convex geometry and \cite{geom-tom-Gardner, Hansen-starset-survey} for star geometry.

First, we review some basic functions. 
Let $K$ be a closed subset of $\R^d$. The \emph{radial function} of $K$ is defined by 
\begin{align}\label{e:radial}
\rho_{K} (x) := \sup \{ t > 0 : t \cdot x \in K \}. 
\end{align}
It follows that the gauge function of $K$ satisfies $\|x\|_K = 1/ \rho_{K} (x)$ for all $x \in \mathbb{R}^{d}$ such that $x \neq 0$.  
The \emph{support function} of $K$ at $x \in \mathbb{R}^{d}$ is given by
$$
h_{K}(x) := \sup \{ \langle x, y \rangle : y \in K \}.
$$
Finally, we note that the gauge function and the support function are homogeneous with degree $1$, while the radial function is homogeneous with degree $-1$.  As such, in what follows, we generally speak of these functions restricted to the unit sphere $\mathbb{S}^{d-1} := \{ u \in \mathbb{R}^d : \|u\|_{\ell_2} = 1 \}$. We also use $\vol_d(K)$ to denote the usual $d$-dimensional volume of $K \subseteq \R^d$.

Recall that a closed set $K \subseteq \mathbb{R}^{d}$ is \emph{convex} if $x , y \in K$ implies that $\theta x + (1-\theta)y \in K$ for all $0 \leqslant \theta \leqslant 1$.  We call a convex set $K$ a \emph{convex body} if it is also compact and contains the origin in its interior.
We say that a closed set $K \subseteq \mathbb{R}^{d}$ is \emph{star-shaped} (with respect to the origin) if  
for all $x \in K$, the line segment $[0,x] := \{tx : t \in [0,1]\}$ is also contained in $K$. An equivalent definition of a star body as described in the introduction is 
a compact set $K \subseteq \R^d$ that is star-shaped and has a positive and continuous radial function $\rho_K$.  
We will denote the class of convex bodies and star bodies in $\R^d$ by $\Ccal^d$ and $\Scal^d$, respectively. Note that $\Ccal^d$ is a strict subclass of $\Scal^d$ for $d > 1$.  Convex bodies are determined uniquely by their support function, and star bodies are determined uniquely by their radial function. 

For $K, L \in \mathcal{S}^d$, 
it is easy to see that $K \subseteq L$ if and only if $\rho_K \leqslant \rho_L$.  We say that $L$ is a \emph{dilate} of $K$ if there exists $\lambda > 0$ such that $L = \lambda K$; that is, $\rho_{L} = \lambda \rho_K$.  In addition, given a linear transformation $\phi \in GL(\R^d)$, one has $\rho_{\phi K}(x) = \rho_{K}(\phi^{-1}x)$ for all $x \in \R^d \backslash \{0\}$.

An additional important aspect of star bodies are their kernels. Specifically, we define the \emph{kernel} of a star body $K$ as the set of all points for which $K$ is star-shaped with respect to, i.e., $\ker(K) := \{x \in K : [x,y] \subseteq K,\ \forall y \in K\}$. Note that $\ker(K)$ is a convex subset of $K$ and $\ker(K) = K$ if and only if $K$ is convex. For a parameter $r > 0$, we define the following subset of $\mathcal{S}^d$ consisting of \emph{well-conditioned} star bodies with nondegenerate kernels:
$$\mathcal{S}^d(r) := \{K \in \mathcal{S}^d : rB^d \subseteq \ker(K)\}.$$ 
Here $B^d$ is the unit (Euclidean) ball in $\R^d$.

\subsection{Metric Spaces} \label{sec:prelims-metric-spaces}

Let $K,L \in \mathcal{S}^d$ 
be star bodies.  The \emph{Minkowski sum} between $K, L$ is given by $K+L := \{x + y: x \in K, y \in L\}$.  Following the definition of the support function, we have that $h_{K+L} = h_{K} + h_{L}$. 
The \emph{Hausdorff distance} is defined by
$$
d_H(K,L) := \inf\{ \varepsilon \geqslant 0  : K \subseteq L + \varepsilon B^d, L \subseteq K + \varepsilon B^d\}.
$$ 

If $K$ and $L$ are also convex, then the Hausdorff distance can be defined in terms of the infinity norm of their support functions $d_H(K,L) = \|h_K - h_L\|_{\infty}$ -- here, both support functions are defined over the unit-sphere $\mathbb{S}^{d-1}$. The Hausdorff distance also defines a metric over the space of nonempty compact sets. The metric space 
$(\Ccal^d, d_H)$ enjoys favorable compactness properties that will aid in establishing several results in our analysis. In particular, it is known that the metric space $(\Ccal^d, d_H)$ is \emph{locally compact}, a seminal result known as Blaschke's Selection Theorem \cite{conv-bodies-Schneider}. Prior to stating this result, we recall that a collection $\Cfrak$ of sets is bounded if there exists a sufficiently large ball $R B^d$ of radius $R < \infty$ such that $K \subseteq R B^d$ for all $K \in \Cfrak$.
\begin{theorem}[Blaschke's Selection Theorem\label{thm:Blaschke}]
The metric space of convex bodies is locally compact. That is, given any bounded collection $\Cfrak \subset \Ccal^d$ in the space of convex bodies and a sequence $(K_n) \subset \Cfrak$, there exists a subsequence $(K_{n_m})$ and a convex body $K \in \Ccal^d$ such that $K_{n_m} \rightarrow K$ in the Hausdorff metric. 
\end{theorem}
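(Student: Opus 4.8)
The plan is to deduce the statement from the sequential compactness of the hyperspace of nonempty compact subsets of a fixed large ball, together with a short argument that convexity survives Hausdorff limits. Since $\Cfrak$ is bounded, fix $R < \infty$ with $K \subseteq R B^d$ for all $K \in \Cfrak$, and let $\mathcal{K}$ denote the collection of nonempty compact subsets of $R B^d$ equipped with $d_H$. The crux is to show that $(\mathcal{K}, d_H)$ is sequentially compact; local compactness of $(\Ccal^d, d_H)$ then follows because every bounded collection of convex bodies is contained in such a $\mathcal{K}$.

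First I would verify that $(\mathcal{K}, d_H)$ is totally bounded. Given $\varepsilon > 0$, take a finite $\varepsilon$-net $N_\varepsilon \subseteq R B^d$ of the (totally bounded) ball $R B^d$. For any $C \in \mathcal{K}$ let $N_\varepsilon(C) := \{ v \in N_\varepsilon : \dist(v, C) \leqslant \varepsilon \}$; this is a nonempty subset of $N_\varepsilon$, and one checks directly that $C \subseteq N_\varepsilon(C) + \varepsilon B^d$ (each point of $C$ lies within $\varepsilon$ of $N_\varepsilon$) and $N_\varepsilon(C) \subseteq C + \varepsilon B^d$ (by construction, using compactness of $C$), so $d_H(C, N_\varepsilon(C)) \leqslant \varepsilon$. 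Since $N_\varepsilon$ has only finitely many subsets, this yields a finite $\varepsilon$-net of $\mathcal{K}$. Next I would check completeness: for a Cauchy sequence $(C_n)$ in $\mathcal{K}$, the set $C := \bigcap_{n \geqslant 1} \overline{\bigcup_{m \geqslant n} C_m}$ is nonempty and compact, and a routine estimate shows $d_H(C_n, C) \to 0$. A complete, totally bounded metric space is compact and hence sequentially compact, so the given sequence $(K_n)$ has a subsequence with $K_{n_m} \to K$ in $d_H$ for some nonempty compact $K \subseteq R B^d$. (Alternatively one may bypass completeness and extract the convergent subsequence directly via a diagonal argument across the nets $N_{1/k}$.)

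It then remains to show $K$ is convex. Fix $x, y \in K$ and $\theta \in [0,1]$. From $\dist(x, K_{n_m}) \leqslant d_H(K, K_{n_m}) \to 0$, and similarly for $y$, pick $x_m, y_m \in K_{n_m}$ with $x_m \to x$ and $y_m \to y$. Convexity of $K_{n_m}$ gives $\theta x_m + (1 - \theta) y_m \in K_{n_m}$; this sequence converges to $\theta x + (1 - \theta) y$, and since $\dist\big(\theta x + (1-\theta) y,\, K\big) \leqslant \| \theta x + (1-\theta) y - \theta x_m - (1-\theta) y_m \| + d_H(K_{n_m}, K) \to 0$ and $K$ is closed, we conclude $\theta x + (1-\theta) y \in K$. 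Thus $K$ is a nonempty compact convex set. To place $K$ in $\Ccal^d$ exactly as stated — i.e.\ with the origin in its interior — one uses a uniform nondegeneracy condition on the collection, such as $r B^d \subseteq K_n$ for all $n$ (in the spirit of the classes $\Scal^d(r)$ defined above), which passes to the limit to give $r B^d \subseteq K$; absent such a condition the Hausdorff limit of convex bodies may be lower-dimensional.

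I expect the main obstacle to be the hyperspace compactness step, namely carrying out the total-boundedness and completeness arguments (equivalently, the diagonal extraction) with the correct $\varepsilon$ bookkeeping in the Hausdorff metric; the persistence of convexity under $d_H$-limits is by comparison a short closedness argument, and the reduction to a fixed ball is immediate from boundedness of $\Cfrak$.
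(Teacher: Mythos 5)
Your proof is correct, but there is nothing in the paper to compare it against: Theorem~\ref{thm:Blaschke} is stated with a citation to Schneider's book \cite{conv-bodies-Schneider} and is not proved in the paper. Your argument is essentially the standard textbook route — reduce to the hyperspace of nonempty compact subsets of a fixed ball $RB^d$, show that hyperspace is compact in $d_H$ (via total boundedness from a finite net of $RB^d$ together with completeness, or equivalently via a diagonal extraction), extract a convergent subsequence, and then observe that convexity is a closed property under Hausdorff limits. All of those steps are sound; the total-boundedness bookkeeping with $N_\varepsilon(C)$ and the Kuratowski-style limit $C = \bigcap_n \overline{\bigcup_{m\geq n} C_m}$ for completeness are both correct.

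You are also right to flag the subtlety at the end: as the paper defines ``convex body'' (compact, convex, origin in the interior), a bounded sequence of convex bodies can Hausdorff-converge to a lower-dimensional compact convex set — e.g.\ $K_n = B^1 \times [-1/n,1/n]$ in $\R^2$ — so the limit need not lie in $\Ccal^d$ without an additional uniform nondegeneracy hypothesis such as $rB^d \subseteq K_n$. The classical Blaschke Selection Theorem is usually stated with the limit being a nonempty compact convex set, not a full-dimensional body. This gap is harmless for the paper because every place Theorem~\ref{thm:Blaschke} is invoked (notably in the proof of Lemma~\ref{lem:Scald-r-R-closed}, applied to the kernels $\ker(L_i) \supseteq rB^d$) the sets in play all contain a common ball, so the limit is automatically a convex body; but as a free-standing statement the theorem, as written, is slightly too strong, and your note identifies exactly the fix.
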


While this result plays an important role in the context of our results on convex bodies, we also require an analogous result for star bodies. A generalization is known for star bodies equipped with the Hausdorff metric \cite{Hirose1965}, however this is not the metric for which relevant continuity properties over the space of star bodies hold. We present here an alternative metric -- the radial metric -- over star bodies for which it is possible to derive a Blaschke Selection Theorem.  In the convex geometry literature, the radial metric is often more natural over the space of star bodies than the Hausdorff metric. Concretely, let $K, L \subset \mathbb{R}^{d}$ be star bodies.  We define the \emph{radial sum} $\tilde{+}$ between $K$ and $L$ as $K \tilde{+} L := \{ x + y : x \in K, y \in L, x = \lambda y \}$; that is, unlike the Minkowski sum, we restrict the pair of vectors to be parallel.  The radial sum obeys the relationship $\rho_{K \tilde{+} L}(u) := \rho_{K}(u) + \rho_{L}(u)$.  
We denote the \emph{radial} metric between two star bodies $K,L$ as
$$
\delta(K,L) := \inf\{ \varepsilon \geqslant 0 : K \subseteq L \,\tilde{+}\, \varepsilon B^d, L \subseteq K\, \tilde{+}\, \varepsilon B^d\}.
$$
In a similar fashion, the radial metric satisfies $\delta(K,L) := \|\rho_K - \rho_L\|_{\infty}$.

\subsection{Brunn-Minkowski Theory}

The Brunn-Minkowski theory for convex bodies \cite{conv-bodies-Schneider} combines Minkowski addition and notions of volume to understand the structure of the space of convex bodies and prove important geometric inequalities.  Starting with Lutwak \cite{Lutwak1975}, a dual Brunn-Minkowski theory for star bodies equipped with radial addition was also developed. This theory studies functionals that measure the size of a star body $K$ relative to another $L$ in a particular way, and studies the extremals of such functionals. The importance of such results in the context our work is that we show our objective of interest has an equivalent interpretation as this functional of interest, and such results analyzing the extremals can aid in characterizing minimizers of our optimization problem. More concretely, the \emph{dual mixed volume} between two star bodies $K, L \in \mathcal{S}^d$ for any $i \in \R$ is defined as:
\begin{align}
    \tilde{V}_i(K, L) := \frac{1}{d}\int_{\mathbb{S}^{d-1}} \rho_K(u)^{i}\rho_L(u)^{d-i} \mathrm{d}u. \label{eq:dual-mixed-vol-definition}
\end{align}
Note that for all $i$,  $\tilde{V}_i(K, K) = \frac{1}{d}\int_{\mathbb{S}^{d-1}} \rho_K(u)^{d} \mathrm{d}u = \mathrm{vol}_d(K)$ is the usual $d$-dimensional volume of $K$. Lutwak \cite{Lutwak1975} analyzed upper and lower bounds of $\tilde{V}_i(K,L)$ depending on the volumes of $K$ and $L$, and characterized for which sets $K$ and $L$ would these bounds be attained. An important case for our purposes is the following result when $i = -1$: \begin{theorem}[Special Case of Theorem 2 in \cite{Lutwak1975}] \label{thm:lutwak-dmv} For star bodies $K,L \in \Scal^d$, we have
\[\tilde{V}_{-1}(K, L)^d \geqslant \vol_d(K)^{-1}\vol_d(L)^{d + 1},\]
and equality holds if and only if $K$ and $L$ are dilates.

\end{theorem}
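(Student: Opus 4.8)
The plan is to reduce the claimed inequality to a single application of Hölder's inequality on the unit sphere $\mathbb{S}^{d-1}$. Recall from the definitions that
\[
d\,\tilde{V}_{-1}(K,L) = \int_{\mathbb{S}^{d-1}} \rho_K(u)^{-1}\rho_L(u)^{d+1}\,\mathrm{d}u, \qquad d\,\vol_d(K) = \int_{\mathbb{S}^{d-1}}\rho_K(u)^d\,\mathrm{d}u,
\]
and likewise $d\,\vol_d(L) = \int_{\mathbb{S}^{d-1}}\rho_L(u)^d\,\mathrm{d}u$. Since $K$ and $L$ are star bodies, their radial functions are continuous and strictly positive on the compact set $\mathbb{S}^{d-1}$, hence bounded above and bounded away from zero, so all three integrals are finite and positive and the manipulations below are legitimate.

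First I would decompose the integrand as
\[
\rho_L(u)^d = \bigl(\rho_K(u)^{-1}\rho_L(u)^{d+1}\bigr)^{\frac{d}{d+1}} \cdot \bigl(\rho_K(u)^d\bigr)^{\frac{1}{d+1}},
\]
which one verifies by matching the exponents of $\rho_K$ and $\rho_L$. Applying Hölder's inequality with conjugate exponents $p = \tfrac{d+1}{d}$ and $q = d+1$ then gives
\[
\int_{\mathbb{S}^{d-1}}\rho_L(u)^d\,\mathrm{d}u \;\leqslant\; \left(\int_{\mathbb{S}^{d-1}}\rho_K(u)^{-1}\rho_L(u)^{d+1}\,\mathrm{d}u\right)^{\frac{d}{d+1}}\left(\int_{\mathbb{S}^{d-1}}\rho_K(u)^d\,\mathrm{d}u\right)^{\frac{1}{d+1}}.
\]
Rewriting this in terms of $\tilde{V}_{-1}$ and $\vol_d$, the factors of $d$ on the two sides match since $d = d^{\frac{d}{d+1}}\cdot d^{\frac{1}{d+1}}$, so they cancel and leave $\vol_d(L) \leqslant \tilde{V}_{-1}(K,L)^{\frac{d}{d+1}}\vol_d(K)^{\frac{1}{d+1}}$. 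Raising both sides to the power $d+1$ and rearranging yields exactly $\tilde{V}_{-1}(K,L)^d \geqslant \vol_d(K)^{-1}\vol_d(L)^{d+1}$.

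For the equality characterization I would invoke the equality case of Hölder's inequality: equality holds if and only if the functions $\rho_K^{-1}\rho_L^{d+1}$ and $\rho_K^d$ are proportional almost everywhere on $\mathbb{S}^{d-1}$, i.e.\ $\rho_L(u)^{d+1} = c\,\rho_K(u)^{d+1}$ for some constant $c>0$ and a.e.\ $u$. Because both radial functions are continuous and positive, this forces $\rho_L = c^{1/(d+1)}\rho_K$ everywhere on $\mathbb{S}^{d-1}$, and since a star body is determined by its radial function this is precisely the assertion that $L$ is a dilate of $K$. Conversely, if $L = \lambda K$ then $\rho_L = \lambda\rho_K$, and substituting into the integral definitions gives $\tilde{V}_{-1}(K,L)^d = \lambda^{d(d+1)}\vol_d(K)^d = \vol_d(K)^{-1}\bigl(\lambda^d\vol_d(K)\bigr)^{d+1} = \vol_d(K)^{-1}\vol_d(L)^{d+1}$, so equality does hold.

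There is no serious obstacle: the core of the argument is a clean Hölder estimate. The only points needing a little care are (a) confirming finiteness and positivity of the three integrals so that Hölder and the subsequent rearrangement apply, and (b) upgrading the ``proportional almost everywhere'' conclusion from the Hölder equality case to ``proportional everywhere'' using continuity of the radial functions, and checking the (routine) converse direction.
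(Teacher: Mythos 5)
Your proof is correct. The paper does not reprove this statement — it cites it as Theorem~2 of Lutwak (1975) — and your Hölder-on-the-sphere argument is exactly the standard route by which Lutwak's dual mixed volume inequalities are established, so there is nothing here that diverges from the cited source. The exponent bookkeeping in the decomposition $\rho_L^d = (\rho_K^{-1}\rho_L^{d+1})^{d/(d+1)}(\rho_K^d)^{1/(d+1)}$ checks out, the conjugate pair $(\tfrac{d+1}{d},\,d+1)$ is right, the factors of $d$ cancel as you claim, and the equality case is handled properly (Hölder equality gives $\rho_L^{d+1}=c\,\rho_K^{d+1}$ a.e., which continuity and positivity upgrade to everywhere, yielding a dilate; the converse substitution is routine).
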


{
\subsection{Notation}

For a distribution $P$, let $F(\cdot; P) : \Scal^d \rightarrow \R$ denote the functional $F(K; P) := \E_P[\|x\|_K]$. The letters $C$ and $c$ will be reserved for numerical constants, with subscripts denoting quantities they may depend on, e.g., $C_{\Cfrak}$ will denote a constant that depends on $\Cfrak \subset \Scal^d$. We say that $a \lesssim b$ if and only if $a \leqslant Cb$ for some positive absolute constant $C$. Note that $a \gtrsim b$ is defined in an analogous fashion. Special constants include $\kappa_{\ell} := \vol_{\ell}(B^{\ell})$ where $B^{\ell}$ is the unit Euclidean ball in dimension $\ell \in [d]$ and $\vol_{\ell}(\cdot)$ is the $\ell$-dimensional volume functional. For a parameter $r > 0$, we set $R_r := \frac{d+1}{r^{d-1}\kappa_{d-1}}$ and $\Scal^d(r) := \{K \in \Scal^d : rB^d \subseteq \ker(K)\}$. We further set $\Scal^d(r,1):=\Scal^d(r) \cap \{K \in \Scal^d : \vol_d(K) = 1\}.$ The corresponding restrictions of these sets to convex bodies are denoted by $\Ccal^d(r)$ and $\Ccal^d(r,1)$, respectively.
}

\section{Minimizers of the Population Risk} \label{sec:minimizers-star-bodies}

In this section, we will describe a setting where the unique optimal regularizer for a data distribution $P$ can be derived and provide a characterization for when this optimal regularizer is convex. 
As described in the introduction, we aim to find the unique star body that solves the following optimization problem:
\begin{align}\label{e:opt_1}
\argmin_{K \in \mathcal{S}^d: \mathrm{vol}_d(K) = 1}    \E_P[\|x\|_K]. 
\end{align}
The optimal regularizer is convex when the solution of \eqref{e:opt_1} is a convex body. We will see in the following that a unique solution can be obtained by interpreting the objective as a dual mixed volume.

In Section \ref{sec:radial_function} we define the fundamental quantity determined by $P$ that is needed to characterize the optimal star body. In Section \ref{sec:Opt_starbody_regularizers} we will state the main result characterizing the solution to \eqref{e:opt_1} and in Section \ref{sec:opt_reg_convex} we give a condition on $P$ that implies the optimal star body regularizer will be convex. Finally in Section \ref{sec:examples} we provide examples of different data distributions $P$ for which we apply this theory to obtain a description of the optimal regularizer.

\subsection{Radial Function Associated to a Data Distribution}\label{sec:radial_function}

Given a data distribution $P$, we would like to find a way to summarize the relevant structure that determines an optimal regularizer. 
Since we have restricted to positively homogeneous functions, the regularizer is characterized by a function on the unit sphere. Thus, given a data distribution, we need a statistic related to the distribution in each unit direction.
Recall that star bodies are uniquely determined by the distance from the origin to the boundary in each direction given by the radial function of the star body. Thus, we can hope to characterize an optimal regularizer through a function on the unit sphere obtained from $P$ that determines the radial function of the optimal star body that solves \eqref{e:opt_1}.

To be precise, we first restrict to data distributions $P$ that have a density with respect to Lebesgue measure on $\R^d$. We then define the following function on the unit sphere for a given $P$ with density $p$:
\begin{align}\label{e:rhoP}
    \rho_{P}(u) := \left(\int_0^{\infty} r^d p(ru) \mathrm{d} r\right)^{1/(d+1)}, \qquad u \in \mathbb{S}^{d-1}.
\end{align} Under certain conditions, the function $\rho_P$ is precisely the radial function of a star body $L_P$ in $\mathbb{R}^d$.
Using this interpretation, we will see that $\rho_P$ captures all the relevant information from the data distribution in order to determine a unique optimal regularizer.

Note that many probability distributions may correspond to the same radial function and associated star body. The following is thus a natural question: What aspects of the distribution $P$ characterize the star body $L_P$? Intuitively, $\rho_P(u)$ captures how much and how far away on average the mass of the distribution is in a given direction $u$. Thus the corresponding star body $L_P$ will have larger radius in directions where there is more mass of the distribution and where the mass is farther away from the origin. More precisely, $\rho_P^{d+1}$ is the density of the measure
$\mu_P(\cdot) = \E_P\left[\|x\|_{\ell_2} \mathbf{1}_{\{x/\|x\|_{\ell_2} \in \cdot\}}\right]$ on the unit sphere.
Additionally, $\rho_P$ captures linear transformations of the data. For a random vector with distribution $P$ and its transformation under a linear map $\phi$, the radial function corresponding to the distribution of the transformed vector will define a dilate of the linear image $\phi(L_P)$. For example, 
rotations of the data distribution will be captured by a rotation of $L_P$. To illustrate what the radial function statistic does not capture, the following example describes a class of probability distributions that all correspond to a constant radial function.
\begin{ex}Let $f : \mathbb{S}^{d-1} \to [0, \infty)$ be a bounded function on the unit sphere, and define another function
\[g_f(u) := \left(1 + f(u)^{d+1}\right)^{\frac{1}{d+1}}, \quad u \in \mathbb{S}^{d-1}.\]
Note that $g_f > f$. Consider the compact set
\[A_f = \{x \in \R^d : f(x/\|x\|_{\ell_2}) \leqslant \|x\|_{\ell_2} \leqslant g_f(x/\|x\|_{\ell_2})\},\]
and let $P_f$ be the uniform distribution over $A_f$. For any $f$, the radial function associated with $P_f$ satisfies 
\[\rho_{P_f}(u)^{d+1} = \frac{1}{\vol_d(A_f)}\int_{f(u)}^{g_f(u)} r^d \mathrm{d} r = \frac{1}{(d+1)\vol_d(A_f)}\left(g_f(u)^{d+1} - f(u)^{d+1}\right) = \frac{1}{(d+1)\vol_d(A_f)},\]
and the corresponding star body $L_{P_f}$ is a dilate of $B^d$.
Thus, the entire class of distributions $\{P_f : f : \mathbb{S}^{d-1} \to [0, \infty) \text{ is bounded}\}$ correspond to constant radial functions and under the volume normalization will all correspond to the same optimal star body given by a unit volume $\ell_2$ ball. See Figure \ref{fig:l2norm-supports} for examples of the support of $P_f$ for different functions $f$. Intuitively, each distribution $P_f$ induces the same star body (up to scaling) because in each direction, the average mass of the distribution (weighted by its distance from the origin) is the same, regardless of the behavior of $f$.
\end{ex}
 \begin{figure}
    \centering
    \includegraphics[width=.27\textwidth]{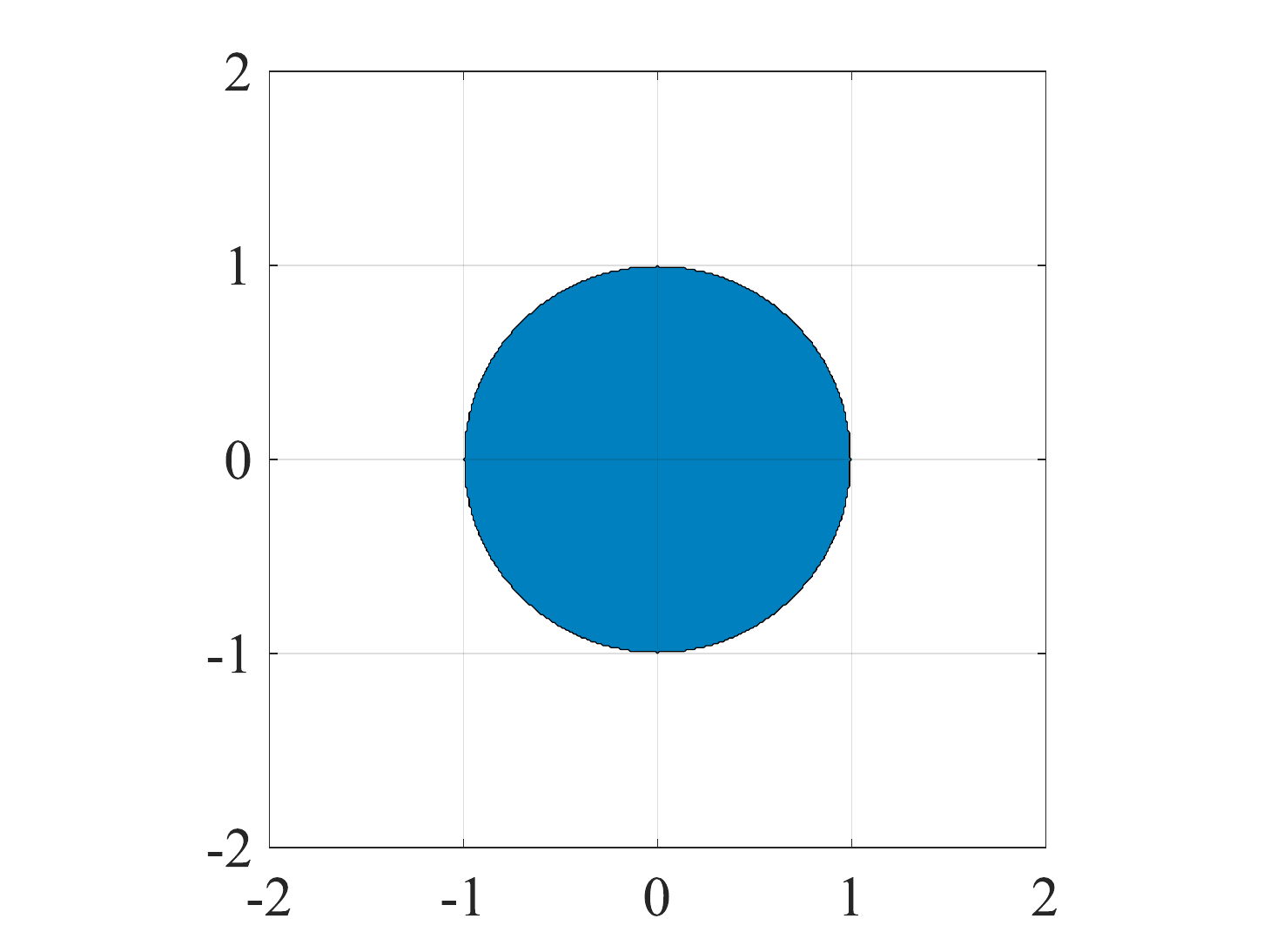}\hspace{-.75cm}
    \includegraphics[width=.27\textwidth]{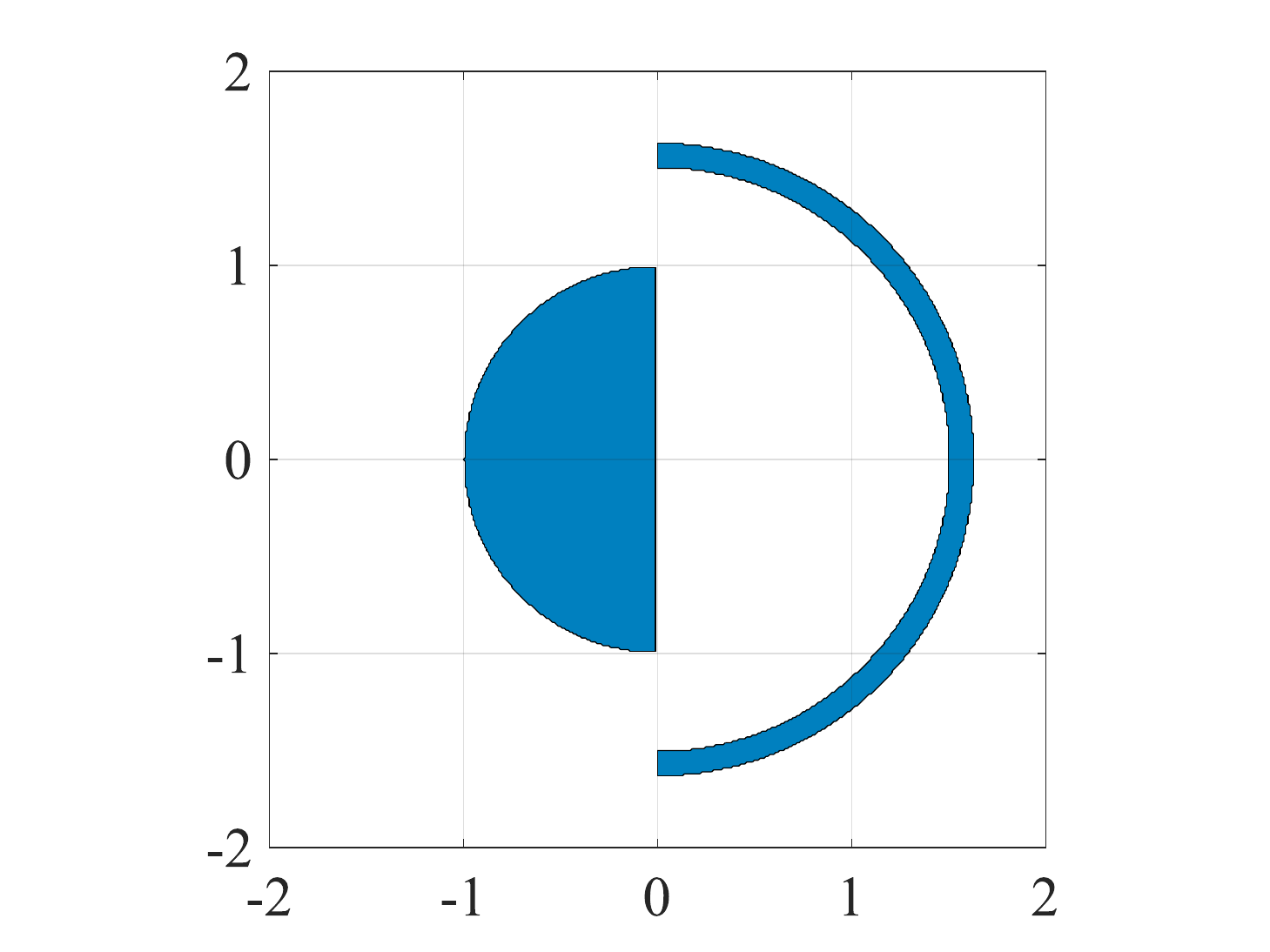}\hspace{-.75cm}
    \includegraphics[width=.27\textwidth]{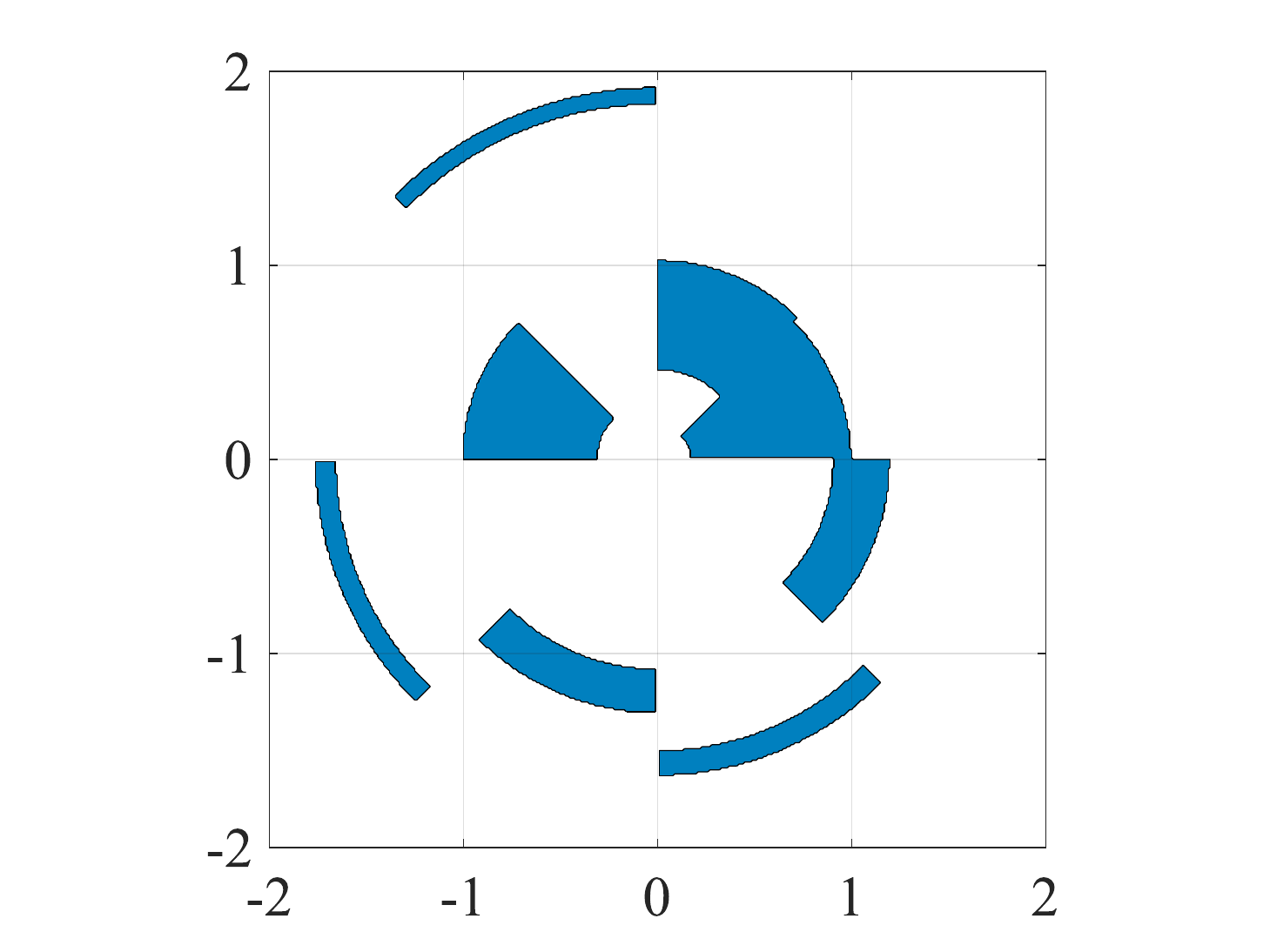}\hspace{-.75cm}
    \includegraphics[width=.27\textwidth]{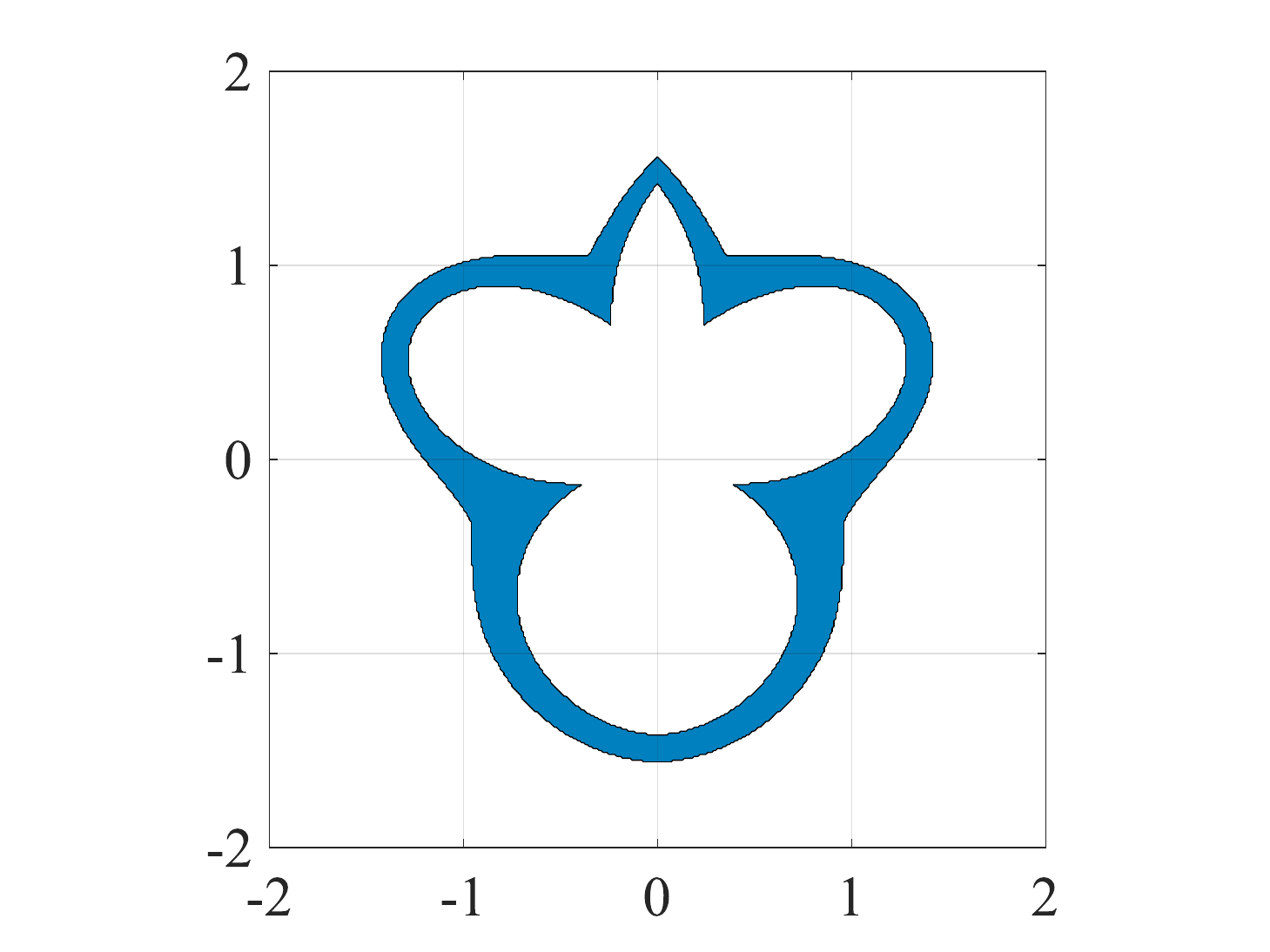}
    \caption{
    Plots of the support of four different compact sets $A_f$ for different functions $f$. The uniform distribution over each of these supports will correspond to the same constant radial function, and thus have the same optimal star body regularizer given by a scaled $\ell_2$-norm.}
    \label{fig:l2norm-supports}
\end{figure}

\subsection{Optimal Star Body Regularizers}\label{sec:Opt_starbody_regularizers}

We now state and prove our main result characterizing the optimal star body regularizer for \eqref{eq:star-body-min}. Given a data distribution $P$ of interest, our result establishes that there is a unique star body $K_*$ that achieves the minimum value of the objective over the space of star bodies of constant volume. 
The conditions on our distribution are fairly mild, and are satisfied for any distribution with a continuous density and finite second moments. They are also satisfied for more general cases, and we will explore examples of such distributions in the sequel.  Our main result is stated as follows:

\begin{theorem}\label{t:starbody_min}
Let $P$ be a distribution on $\R^d$ with density $p$ with respect to Lebesgue measure, and assume $\E_P[\|x\|_{\ell_2}] < \infty$. Suppose the function $\rho_P$ as defined in \eqref{e:rhoP} is positive and continuous over the unit sphere. Then there exists a unique star body $L_P \in \mathcal{S}^d$ whose radial function is $\rho_P$, and the set $K_* := \mathrm{vol}_d(L_P)^{-1/d}L_P$ is the unique solution to the minimization problem \eqref{e:opt_1}.

\end{theorem}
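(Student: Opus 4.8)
The plan is to connect the objective $F(K;P) = \E_P[\|x\|_K]$ to a dual mixed volume and then invoke Lutwak's inequality (Theorem~\ref{thm:lutwak-dmv}) with $i = -1$. First I would rewrite the expectation in polar coordinates: writing $x = r u$ with $r = \|x\|_{\ell_2}$ and $u \in \mathbb{S}^{d-1}$, and using $\|ru\|_K = r \|u\|_K = r / \rho_K(u)$, we get
\[
\E_P[\|x\|_K] = \int_{\mathbb{S}^{d-1}} \int_0^\infty \frac{r}{\rho_K(u)}\, r^{d-1} p(ru)\, \mathrm{d}r\, \mathrm{d}u = \int_{\mathbb{S}^{d-1}} \frac{1}{\rho_K(u)} \left( \int_0^\infty r^d p(ru)\, \mathrm{d}r \right) \mathrm{d}u = \int_{\mathbb{S}^{d-1}} \frac{\rho_P(u)^{d+1}}{\rho_K(u)}\, \mathrm{d}u,
\]
using the definition \eqref{e:rhoP} of $\rho_P$. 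The hypothesis $\E_P[\|x\|_{\ell_2}] < \infty$ is what guarantees these manipulations are valid and the integral is finite (taking $K$ to be a ball shows $\rho_P^{d+1}$ is integrable on the sphere). Recognizing $\rho_P(u)^{d+1} = \rho_{L_P}(u)^{d+1}$ (once we know $\rho_P$ is a legitimate radial function) and $\rho_K(u)^{-1} = \rho_K(u)^{-1}$, this integral is exactly $d \cdot \tilde V_{-1}(K, L_P)$ in the notation of \eqref{eq:dual-mixed-vol-definition} with $i = -1$, $d - i = d+1$.

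Next I would dispatch the existence and uniqueness of $L_P$: since $\rho_P$ is assumed positive and continuous on the sphere, the set $L_P := \{ r u : u \in \mathbb{S}^{d-1},\ 0 \le r \le \rho_P(u)\}$ is compact, star-shaped with respect to the origin, and has positive continuous radial function equal to $\rho_P$ — hence it is a star body, and it is the unique such body because star bodies are determined by their radial functions (Section~\ref{sec:preliminaries}). Then the minimization \eqref{e:opt_1} becomes
\[
\argmin_{K \in \mathcal{S}^d:\, \vol_d(K) = 1} \ d\, \tilde V_{-1}(K, L_P).
\]
By Theorem~\ref{thm:lutwak-dmv}, for every star body $K$ we have $\tilde V_{-1}(K, L_P)^d \ge \vol_d(K)^{-1} \vol_d(L_P)^{d+1}$, so on the constraint set $\vol_d(K) = 1$ we obtain $\tilde V_{-1}(K, L_P) \ge \vol_d(L_P)^{(d+1)/d}$, with equality if and only if $K$ is a dilate of $L_P$. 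Among dilates $\lambda L_P$, exactly one has unit volume, namely $\lambda = \vol_d(L_P)^{-1/d}$, i.e. $K_* = \vol_d(L_P)^{-1/d} L_P$. This proves $K_*$ is the unique minimizer.

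I expect the only genuinely delicate point to be the justification of the polar-coordinate computation and the finiteness claims: one must argue that Tonelli/Fubini applies (all integrands are nonnegative, so this is fine), that $\rho_P(u)^{d+1} = \int_0^\infty r^d p(ru)\,\mathrm{d}r$ is finite for a.e.\ $u$ and in fact — given the continuity hypothesis — for every $u$, and that plugging $K = B^d$ into the identity together with $\E_P[\|x\|_{\ell_2}] < \infty$ yields $\int_{\mathbb{S}^{d-1}} \rho_P(u)^{d+1}\,\mathrm{d}u < \infty$, so that $\tilde V_{-1}(K, L_P) < \infty$ for all star bodies $K$ (using that $\rho_K$ is bounded below by a positive constant by compactness and the origin being interior). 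Everything after the reduction to $\tilde V_{-1}$ is an immediate application of Lutwak's theorem, so the paper's contribution here is really the identification of the objective as a dual mixed volume.
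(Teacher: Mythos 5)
Your argument is correct and follows essentially the same route as the paper's proof: rewrite $\E_P[\|x\|_K]$ in polar coordinates to identify it with $d\,\tilde V_{-1}(K,L_P)$, construct $L_P$ from $\rho_P$, and then invoke Lutwak's dual mixed volume inequality (Theorem~\ref{thm:lutwak-dmv}) with its equality case to pin down the unique unit-volume minimizer. The only cosmetic difference is that the paper constructs $L_P$ as the $1$-sublevel set of the positively homogeneous function $f(x)=\rho_P(x/\|x\|_{\ell_2})^{-1}$ while you describe it directly as $\{ru : u\in\mathbb{S}^{d-1},\ 0\le r\le \rho_P(u)\}$, but these define the same star body.
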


\begin{proof}[Proof of Theorem \ref{t:starbody_min}]
The main idea of our proof is that the population objective can be written as a dual mixed volume with respect to the star body $K$ we are optimizing and the distribution-dependent star body $L_P$. Once this is established, we can utilize the extremal inequality in Theorem \ref{thm:lutwak-dmv} to characterize the minimizers of our objective subject to a fixed volume constraint.

We first construct the star body $L_P$. Define the function 
\[f(x) := \left(\int_0^{\infty} r^d p(rx) \mathrm{d} r\right)^{-1/(d+1)}, \quad x \in \mathbb{R}^d.\]
Note that $f$ is positively homogeneous and non-negative. Define the subset $L_P$ of $\R^d$ by
\[L_P := \{ x \in \R^d : f(x) \leqslant 1\}.\]
The set $L_P$ is star-shaped, because for $x \in L_P$ we have for all $t \in [0,1]$, $f(tx) = tf(x) \leqslant t \leqslant 1$, and thus, $[0,x] \subseteq L_P$.
The gauge of $L_P$ is then
\begin{align*}
\|x\|_{L_P} &= \inf\{ t : x \in t L_P\} = \inf\{ t : x/t \in L_P\} \\
&= \inf\{ t : f(x/t) \leqslant 1\} = \inf\{ t : f(x) \leqslant t \} = f(x).   
\end{align*}
Thus, $f(x)$ is the gauge of $L_P$. In addition, note that $\rho_P(u) = 1/f(u)$ for $u \in \mathbb{S}^{d-1}$ where $\rho_P$ is defined in \eqref{e:rhoP}. Thus $\rho_P$ is the radial function of $L_P$, and $L_P$ is a star body because $\rho_P$ is positive and continuous.

We now establish that the objective in \eqref{e:opt_1} can be written as a dual mixed volume. Observe that by positive homogeneity of the gauge, we have via integrating in spherical coordinates that
\begin{align*}
\E_P[\|x\|_K] &= \int_{\R^d} \|x\|_K p(x) \mathrm{d}x = \int_{\mathbb{S}^{d-1}} \int_{0}^{\infty} r^d\|u\|_K p(ru) \mathrm{d}r\mathrm{d}u\\
&= \int_{\mathbb{S}^{d-1}} \|u\|_K \rho_{P}(u)^{d+1} \mathrm{d} u
\end{align*} where $\rho_P$ is defined in \eqref{e:rhoP}. Moreover, since $\|u\|_K = 1/\rho_K(u)$, the definition of the dual mixed volume gives

\begin{align*}
\E_P[\|x\|_K] &= \int_{\mathbb{S}^{d-1}} \|u\|_K \rho_{P}(u)^{d+1} \mathrm{d} u = \int_{\mathbb{S}^{d-1}} \rho_K(u)^{-1} \rho_{P}(u)^{d+1} \mathrm{d} u = d\tilde{V}_{-1}(K, L_P).
\end{align*} Hence, we can apply Theorem \ref{thm:lutwak-dmv} to show that our objective satisfies \begin{align*}
    \E_P[\|x\|_K] = d\tilde{V}_{-1}(K,L_P) \geqslant d\vol_d(K)^{-1/d}\vol_d(L_P)^{(d+1)/d}
\end{align*} with equality if and only if $K$ and $L_P$ are dilates. 
Thus,
$\E_P[\|x\|_K]$ is minimized over the collection $\{K \in \mathcal{S}^d : \vol_d(K) = 1\}$ by $K_* := \vol_d(L_P)^{-1/d}L_P$.
\end{proof}

\subsection{When is the Optimal Regularizer Convex?}\label{sec:opt_reg_convex}

Theorem \ref{t:starbody_min} explicitly characterizes the relationship between the optimal star body $K_*$ and the distribution $P$. As previously observed, if the star body $K_*$ is in fact a convex body, then the optimal regularizer is a convex function. This connection helps to justify intuition for when a data source modeled by $P$ is amenable to convex regularization. For example, consider the class of log-concave distributions which include many common distributions such as Gaussian, Laplace, and uniform distributions over convex bodies. Since the densities of such distributions have convex level sets, it is natural to guess that a convex regularizer would be most appropriate. If the logarithm of the density is both concave and positively homogeneous of some degree $k$, then the distribution $P$ falls into the class of probability distributions considered in Example \ref{ex:starbodies-density} below. In particular, the density is then a function, $\psi(r) = \exp(-r^k)$, of the gauge of a convex body $L$ and in Example \ref{ex:starbodies-density} we show that the optimal star body will be $L$ by computing the radial function $\rho_P$. Thus, since $L$ is convex, a convex regularizer is optimal for these log-concave distributions.

We can also use Theorem \ref{t:starbody_min} to characterize when a convex regularizer is optimal for general distributions. In particular, the density of $P$ defines the radial function of $K_*$, which determines whether or not the set $K_*$ is convex. This observation leads to an important contribution of this work: a condition on the data distribution $P$ such that the optimal regularizer is convex. 

\begin{corollary}\label{cor:convex_opt}
Let $P$ be a probability measure on $\R^d$ as in Theorem \ref{t:starbody_min}. Define the radial function $\rho_{P}$ as in \eqref{e:rhoP}. If $x \mapsto 1/\rho_P(x)$ is a convex function on $\R^d$, then the optimal $K_*$ as defined in Theorem $\ref{t:starbody_min}$ is a convex body, and the optimal star body regularizer is convex. 
\end{corollary}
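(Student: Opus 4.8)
The plan is to reduce the convexity of $K_*$ to the convexity of its gauge function, which the hypothesis supplies almost directly. First I would recall from Theorem \ref{t:starbody_min} that $K_* = \vol_d(L_P)^{-1/d} L_P$ is a positive dilate of $L_P$; since dilation by a positive scalar preserves convexity, it suffices to prove that $L_P$ is itself a convex body.

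Next I would identify the gauge of $L_P$. In the proof of Theorem \ref{t:starbody_min} it is shown that $\|x\|_{L_P} = f(x)$, where $f(x) = \left(\int_0^{\infty} r^d p(rx)\,\mathrm{d}r\right)^{-1/(d+1)}$, and that $\rho_P(u) = 1/f(u)$ for $u \in \mathbb{S}^{d-1}$. Using the degree-$(-1)$ homogeneity of the radial function, the degree-$1$ homogeneity of the gauge, and the identity $\|x\|_{L_P} = 1/\rho_{L_P}(x)$ valid for all $x \neq 0$, this extends to $\|x\|_{L_P} = 1/\rho_P(x)$ for every $x \in \R^d \setminus \{0\}$ (with the usual convention $\|0\|_{L_P} = 0$). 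So the hypothesis that $x \mapsto 1/\rho_P(x)$ is convex on $\R^d$ is exactly the statement that the gauge $\|\cdot\|_{L_P}$ is a convex function.

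Finally I would invoke the elementary fact that a star body is convex if and only if its gauge is convex: since $L_P = \{x \in \R^d : \|x\|_{L_P} \leqslant 1\}$ is the $1$-sublevel set of the convex function $\|\cdot\|_{L_P}$, it is convex; it is moreover compact with the origin in its interior since $\rho_P$ is positive and continuous on the sphere (part of the hypothesis of Theorem \ref{t:starbody_min}), hence $L_P \in \Ccal^d$. Therefore $K_*$, being a dilate of a convex body, is a convex body, and its gauge $\|\cdot\|_{K_*}$ --- the optimal regularizer --- is convex.

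There is no serious obstacle here; the only point requiring a little care is verifying that the function whose convexity is hypothesized is genuinely the Minkowski functional of $L_P$, and not merely a function agreeing with it on the unit sphere --- this is settled by the homogeneity bookkeeping in the second step. Everything else is immediate from Theorem \ref{t:starbody_min} together with the sublevel-set characterization of convexity.
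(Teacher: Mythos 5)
Your proof is correct and is essentially the argument the paper has in mind (the paper states Corollary~\ref{cor:convex_opt} without a displayed proof, treating it as an immediate consequence of Theorem~\ref{t:starbody_min}): extend $1/\rho_P$ to $\R^d$ by homogeneity, recognize it as the gauge $\|\cdot\|_{L_P}$, and note that $L_P$ is the $1$-sublevel set of this convex function, hence convex, so its dilate $K_*$ is as well. Your remark about checking that the hypothesized function really is the Minkowski functional and not just something agreeing with it on $\mathbb{S}^{d-1}$ is exactly the right point of care, and your homogeneity bookkeeping settles it.
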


This corollary gives a characterization of data distributions where convexity is the correct structure that should be sought among a class of both convex and nonconvex regularizers, and provides a tool for determining whether a given data source is amenable to convex regularization. We will illustrate in Example \ref{ex:gmm-sol} a setting where we can use this corollary to determine a parameter range for which the optimal regularizer for a parametric class of distributions is convex.

\subsection{Examples of Optimal Regularizers for a Given Distribution}\label{sec:examples}

In order to provide more intuition on the above results, we discuss here several additional examples of distributions $P$ and their associated stay body $L_P$ with radial function $\rho_P$ defined in \eqref{e:rhoP}. The two general classes of distributions we consider are distributions that depend on a compact body and general mixture distributions whose mixture components satisfy the assumptions of our Theorem. When the data distribution explicitly depends on the gauge of a star body $L$, then the optimal star body will be a dilate of $L$. For mixture distributions, the optimal star body will be a special sum of sets, akin to radial addition for star bodies. 
We now dive deeper into such examples here:

 \begin{ex}[Densities induced by star bodies]\label{ex:starbodies-density} Suppose $P$ has density $p(x) = \psi(\|x\|_L)$, i.e., the density only depends on the gauge function induced by a star body $L$. If $\psi$ is such that the integral $\int_0^{\infty} t^d \psi(t) dt < \infty$,  then we have that the corresponding radial function of $P$ satisfies
\begin{align*}
\rho_P(u) := \left(\int_{0}^{\infty} r^{d}\psi(r\|u\|_L) \mathrm{d}r\right)^{1/(d+1)} =  \rho_L(u) \left[\int_{0}^{\infty}t^d \psi(t) \mathrm{d}t\right]^{1/(d+1)} = \rho_{c(\psi)L}(u),
 \end{align*}
where $c(\psi) :=  \left[\int_{0}^{\infty}t^d \psi(t) \mathrm{d}t\right]^{1/(d+1)}$. Hence the optimal star body $K_*$ is the unit volume dilate of $L$:
 \[K_* = \vol_d(L)^{-1/d}L.\] 
There are several examples of distributions of interest that fall under this category, which we outline here.

\begin{itemize}
\item Suppose $L = \Bcal_{\ell_q} := \{x \in \R^d : \|x\|_{\ell_q} \leqslant 1\}$ is the $\ell_q$-function unit ball for $0 < q \leqslant \infty$ and the density of $P$ is given by $p(x) = \psi(\|x\|_{\ell_q})$. Then our optimal set is a dilate of the $\ell_q$-ball: $K_* = \vol_d(\Bcal_{\ell_q})^{-1/d}\Bcal_{\ell_q}.$ Note that this includes both convex norm balls $q \geqslant 1$ and nonconvex unit balls $q \in (0,1).$
 \item Another class of distributions $P$ that satisfy our assumptions are uniform distributions over a star body $L$. Let $\mathbf{1}_{E}$ denote the indicator function on the event $E$. In this case, the density is given by 
 \[p(x) = \frac{\mathbf{1}_{\{x \in L\}}}{\mathrm{vol}_d(L)} = \frac{\mathbf{1}_{\{\|x\|_L \leqslant 1\}}}{\mathrm{vol}_d(L)},\] and the radial function $\rho_P$ is
 \begin{align*}
     \rho_{P}(u) = \frac{1}{\mathrm{vol}_d(L)^{1/(d+1)}} \left(\int_0^{\infty} t^d 1_{\{t \leqslant1\}}\mathrm{d}t\right)^{1/(d+1)}\|u\|_L^{-1} =: c_{d,L}\rho_L(u)
 \end{align*} where $c_{d,L} := \left((d+1)\vol_d(L)\right)^{-1/(d+1)}.$
Hence $L_P := c_{d,L}L$, and 
 thus $K_*$ is given by
 \[K_* = \vol_d(L)^{-1/d}L.\]
 \item Suppose $P = \mathcal{N}(0,\Sigma)$ is a multivariate Gaussian distribution with mean zero and covariance matrix $\Sigma$ in $\R^d$. Then, a direct calculation shows that the radial function $\rho_P$ is given by
\begin{align*}
    \rho_P(u) = \det(2\pi\Sigma)^{-\frac{1}{2(d+1)}}\left(\int_0^{\infty} t^d e^{-t^2/2}\mathrm{d}t \right)^{\frac{1}{d+1}}\|\Sigma^{-1/2}u\|_{\ell_2}^{-1}.
\end{align*}
Note that the norm $\|\Sigma^{-1/2}u\|_{\ell_2}$ defines the gauge of an ellipsoid $E_{\Sigma}$ induced by $\Sigma$. Hence the minimizer $K_*$ is the ellipsoid
\begin{align*}
   K_* = \kappa_d^{1/d}\det(\Sigma)^{-1/2d} E_{\Sigma},
\end{align*}
where $\|u\|_{E_{\Sigma}} = \|\Sigma^{-1/2} u\|_{\ell_2}$ and $\kappa_d = \vol_d(B^d)$.
 \end{itemize}
\end{ex}

\begin{ex}[Gaussian mixture model] \label{ex:gmm-sol}
Consider the following Gaussian mixture model $P_{\varepsilon} = \frac{1}{2} \Ncal(0, \Sigma_{\varepsilon,1}) + \frac{1}{2} \Ncal(0,\Sigma_{\varepsilon,2})$ where $\Sigma_{\varepsilon,1} := [1, 0 ; 0 ,\varepsilon] \in \R^{2 \times 2}$ and $\Sigma_{\varepsilon,2} := [\varepsilon, 0 ; 0 , 1] \in \R^{2 \times 2}$ for $0 < \varepsilon < 1$ with density $p_{\varepsilon}(x)$ in $\R^2$. Then, we have that the radial function of $L_{P_{\varepsilon}}$ is given by \begin{align*}
    \rho_{P_{\varepsilon}}(u)^3  & = \int_0^{\infty} r^2 p_{\varepsilon}(ru)\mathrm{d}u = c_{\varepsilon, 1}\|\Sigma_{\varepsilon,1}^{-1/2}u\|_{\ell_2}^{-3} + c_{\varepsilon, 2} \|\Sigma_{\varepsilon,2}^{-1/2}u\|_{\ell_2}^{-3}
\end{align*} where $c_{\varepsilon, i} = \frac{1}{2}\det(2\pi\Sigma_{\varepsilon,i})^{-1/2}\left(\int_0^{\infty} t^2 e^{-t^2/2}\mathrm{d} t \right)$ for $i = 1,2$. The set $L_{P_{\varepsilon}}$ with radial function $\rho_{P_{\varepsilon}}$ is similar to a radial sum of two ellipsoids induced by $\Sigma_{\varepsilon,1}$ and $\Sigma_{\varepsilon,2}$, which is discussed in the next example. In Figure \ref{fig:gauss-mix-sol}, we plot the solutions $L_{P_{\varepsilon}}$ for different values of $\varepsilon$. We see in the plots that for small $\varepsilon$, $L_{P_{\varepsilon}}$ will be a nonconvex star body, but as $\varepsilon$ approaches 1, $L_{P_{\varepsilon}}$ approaches a convex $\ell_2$ ball. One can observe that $1/\rho_{P_{\varepsilon}}(x)$ will be convex for all $\varepsilon$ such that $\varepsilon \in (\varepsilon^*_c, 1]$ for some $\varepsilon^*_c \in (0,1)$. By Corollary \ref{cor:convergence-of-minimizers-conv}, the optimal regularizer will be convex for this range of the parameter $\varepsilon$. \begin{figure}
    \centering
    \includegraphics[width=.27\textwidth]{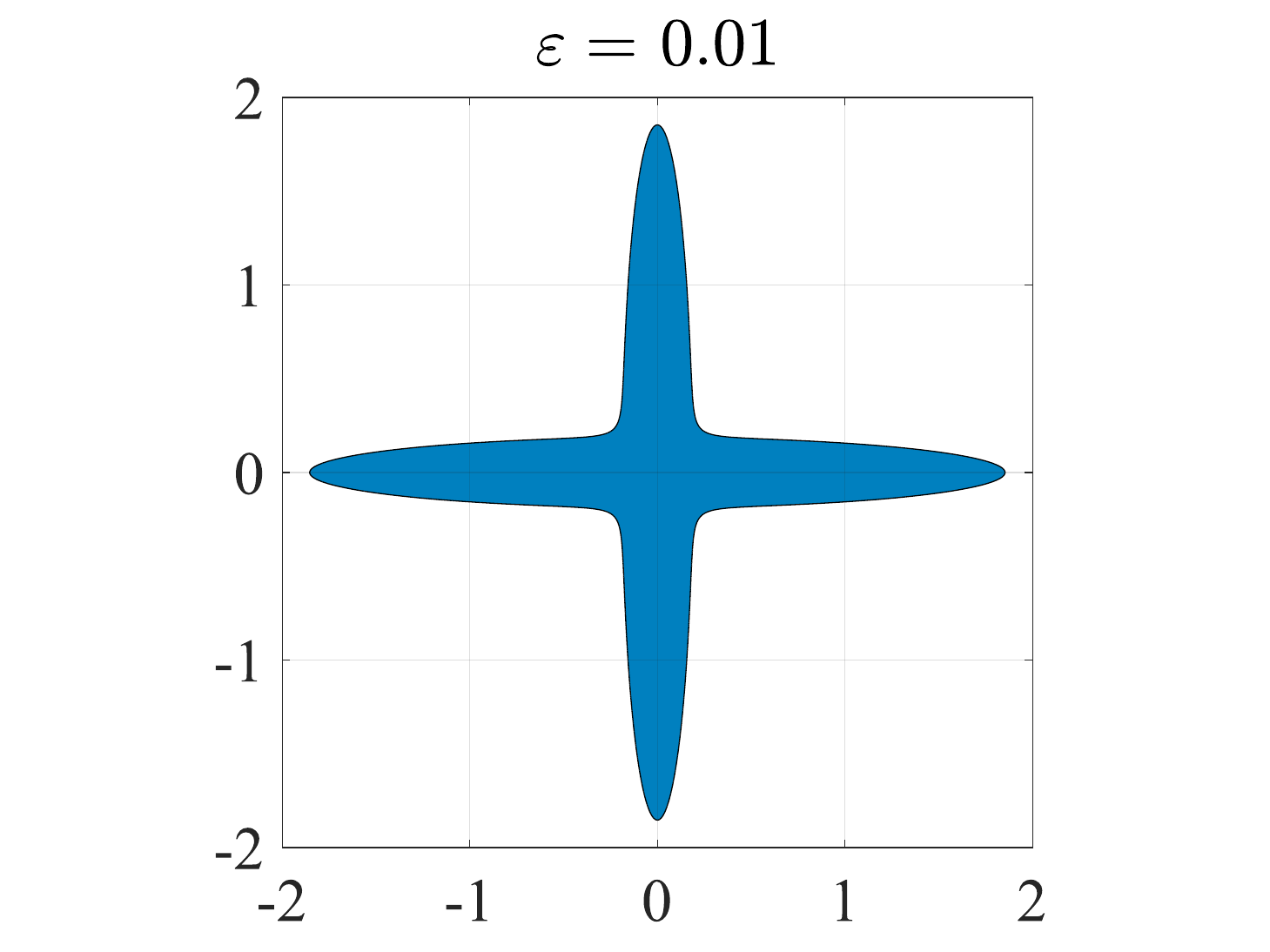}\hspace{-.75cm}
    \includegraphics[width=.27\textwidth]{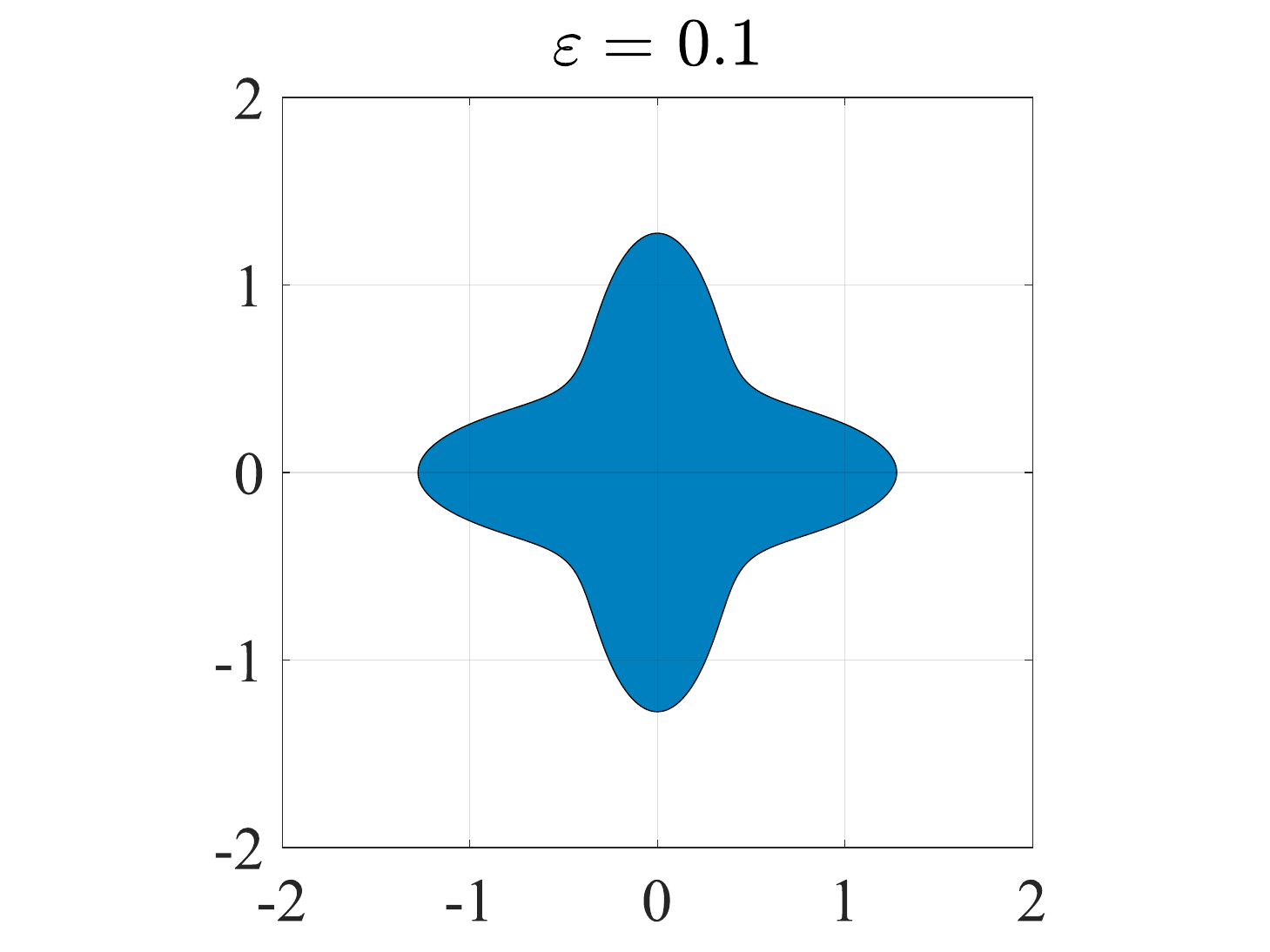}\hspace{-.75cm}
    \includegraphics[width=.27\textwidth]{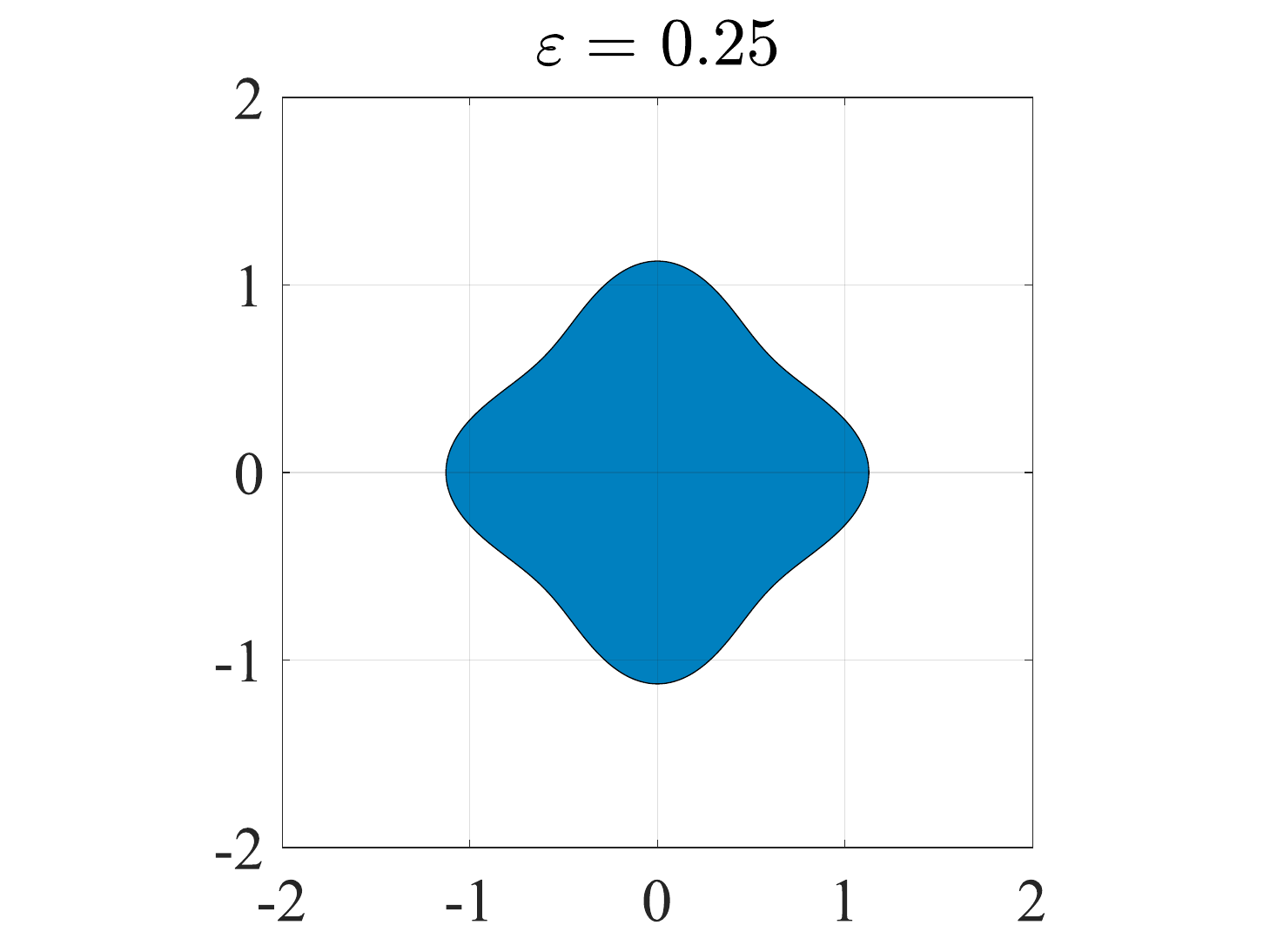}\hspace{-.75cm}
    \includegraphics[width=.27\textwidth]{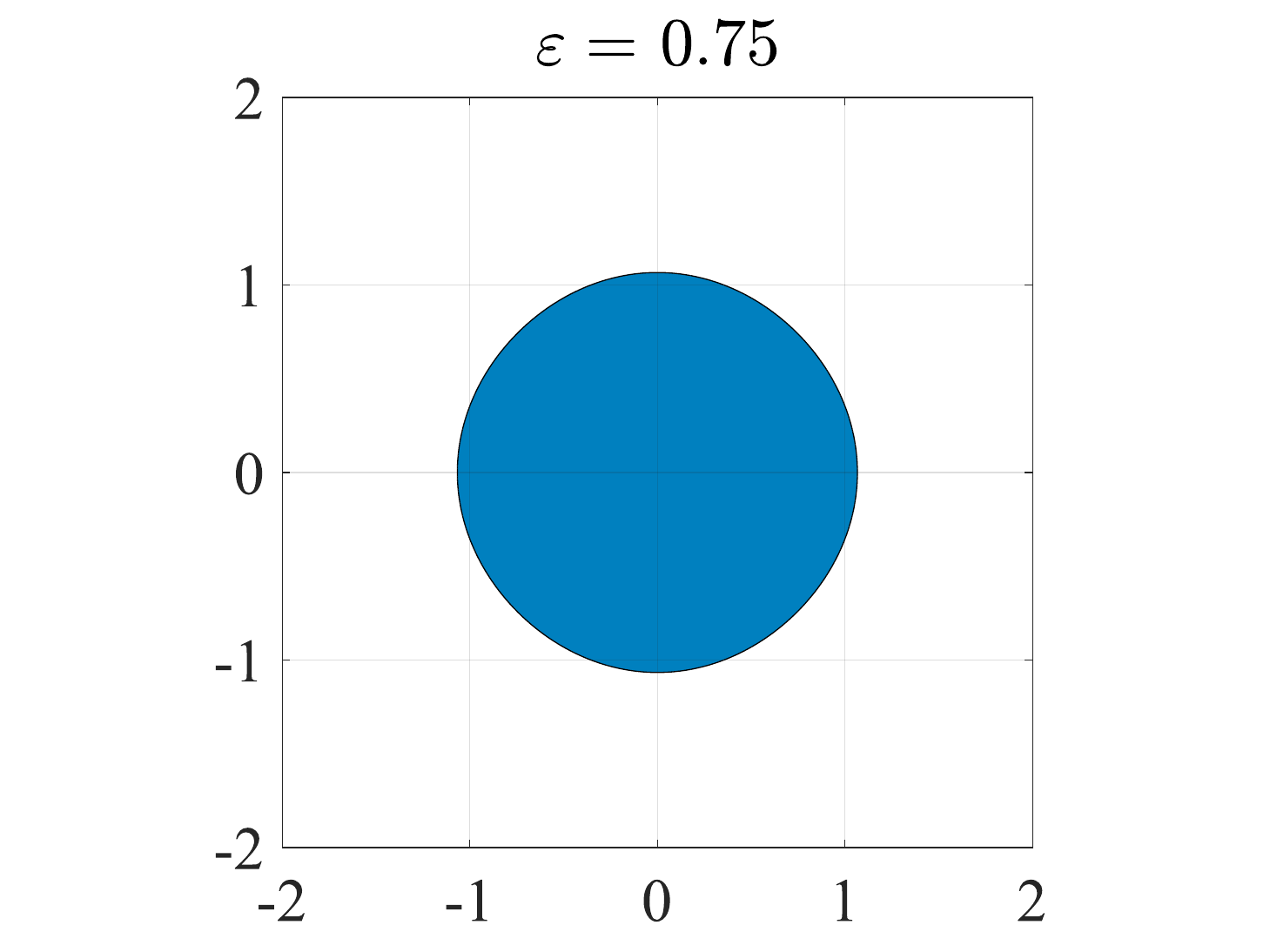}

    \caption{We compute the (unnormalized) optimal star body $L_{P_{\varepsilon}}$ based on the Gaussian mixture model in Example \ref{ex:gmm-sol} for $\varepsilon \in \{0.01, 0.1, 0.25, 0.75\}$. We note that as $\varepsilon$ decreases, the individual components of the Gaussian mixture model concentrate more heavily on a lower-dimensional subspace of $\R^2$. Similarly, the resulting star body also concentrates closer to the relevant subspaces. Finally, we see that as $\varepsilon$ increases, there is a critical value between at which $L_{P_{\varepsilon}}$ becomes convex. Numerically, this value is between $0.3$ and $0.45$. 
    }
    \label{fig:gauss-mix-sol}
\end{figure}

\end{ex}

\begin{ex}[General mixture distributions]
 More generally, suppose that our distribution $P$ is a mixture, i.e., let $P = \sum_{i=1}^m w_iP_i$ for weights $w_i > 0$ and distributions $P_i$ in $\R^d$. Then the associated convex body $K_*$ is a \emph{harmonic Blaschke linear combination}, as defined in \cite{Lutwak1990}, of star bodies $L_{P_i}$. Each $L_{P_i}$ is the associated star body with radial function $\rho_{P_i}$ in \eqref{e:rhoP}. The harmonic Blaschke linear combination $\hat{+}$ between star bodies defines a new star body $K \, \hat{+} \, L$ by adding the radial functions of $K$ and $L$ together in the following way:
\[\frac{\rho_{K \hat{+} L}(u)^{d+1}}{\vol_d(K \hat{+} L)} = \frac{\rho_K(u)^{d+1}}{\vol_d(K)} + \frac{\rho_L(u)^{d+1}}{\vol_d(L)}.\]

\end{ex}

{ 
\subsection{Extension to Positively Homogeneous Regularizers of General Degree} \label{sec:pos-hom-general-degree}

We conclude these results with an extension of our theory to the following more general class of regularizers that are positively homogeneous of degree $\alpha > 0$: $$x \mapsto \|x\|_K^{\alpha}, \qquad K \in \Scal^d.$$ The motivation for considering such regularizers is that certain powers of $\alpha$ are commonly encountered in practice. For example, popular Gaussian priors $P = \Ncal(0,\Sigma)$ induce regularizers of the form $\|\Sigma^{-1/2}x\|_{\ell_2}^2$, which are squared gauges induced by ellipsoids. Moreover, different powers of $\alpha$ can aid in obtaining certain properties beneficial for optimization, such as differentiability. Hence, we consider whether one can analogously characterize the solution to the following $\alpha$-homogeneous variant to problem \eqref{e:opt_1}: \begin{align}\label{e:alpha-hom-problem}
\argmin_{K \in \mathcal{S}^d: \mathrm{vol}_d(K) = 1}    \E_P[\|x\|_K^{\alpha}]. 
\end{align} The following Theorem shows that, by exploiting a more general dual mixed volume inequality of \cite{Lutwak1975}, we can obtain a general characterization of optimal $\alpha$-homogeneous star body regularizers, recovering the results of Theorem \ref{t:starbody_min} when $\alpha = 1$:

\begin{theorem} \label{thm:alpha-hom-general-result}
    Let $P$ be a distribution on $\R^d$ with density $p$ with respect to the Lebesgue measure, and assume $\E_P[\|x\|_{\ell_2}] < \infty$. Fix $\alpha > 0$. Suppose that the following function $\rho_{P,\alpha}$ is positive and continuous over the unit sphere: \begin{align}
        \rho_{P,\alpha}(u) := \left(\int_0^{\infty} r^{d+\alpha-1}p(ru)\mathrm{d}r\right)^{\frac{1}{d+\alpha}}, \qquad u \in \mathbb{S}^{d-1}. \label{eq:rho_P_alpha}
    \end{align} Then there exists a unique star body $L_{P,\alpha} \in \Scal^d$ whose radial function is $\rho_{P,\alpha}$, and the set $K_{*,\alpha} := \vol_d(L_{P,\alpha})^{-1/d}L_{P,\alpha}$ is the unique solution the minimization problem \eqref{e:alpha-hom-problem}.
\end{theorem}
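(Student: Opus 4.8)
The plan is to follow the proof of Theorem \ref{t:starbody_min} line by line, replacing the exponent $-1$ by $-\alpha$ throughout and substituting the $i=-1$ case of Lutwak's inequality (Theorem \ref{thm:lutwak-dmv}) by the general dual mixed volume inequality of \cite{Lutwak1975} evaluated at the index $i=-\alpha$. Concretely, the objective $\E_P[\|x\|_K^\alpha]$ will be rewritten as $d$ times the dual mixed volume $\tilde{V}_{-\alpha}(K, L_{P,\alpha})$ for a distribution-dependent star body $L_{P,\alpha}$ with radial function $\rho_{P,\alpha}$, after which the extremal characterization of $\tilde{V}_{-\alpha}$ under a volume constraint identifies the minimizer.

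First I would construct $L_{P,\alpha}$. Set $f_\alpha(x) := \bigl(\int_0^{\infty} r^{d+\alpha-1} p(rx)\,\mathrm{d}r\bigr)^{-1/(d+\alpha)}$ for $x \in \R^d$; a change of variables $r \mapsto sr$ shows $f_\alpha(sx) = s\, f_\alpha(x)$ for $s>0$, so $f_\alpha$ is positively homogeneous of degree $1$ and non-negative, and by hypothesis $\rho_{P,\alpha} = 1/f_\alpha$ is positive and continuous on $\mathbb{S}^{d-1}$. Exactly as in the proof of Theorem \ref{t:starbody_min}, the set $L_{P,\alpha} := \{x \in \R^d : f_\alpha(x) \leqslant 1\}$ is star-shaped, has gauge $f_\alpha$ and radial function $\rho_{P,\alpha}$ on the sphere, and is therefore the unique star body with radial function $\rho_{P,\alpha}$ (since star bodies are determined by their radial functions).

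Next I would integrate in spherical coordinates. Using positive homogeneity of degree $\alpha$ of $x \mapsto \|x\|_K^\alpha$ and the identity $\int_0^\infty r^{d+\alpha-1} p(ru)\,\mathrm{d}r = \rho_{P,\alpha}(u)^{d+\alpha}$,
\begin{align*}
\E_P[\|x\|_K^\alpha] &= \int_{\mathbb{S}^{d-1}} \|u\|_K^\alpha\,\rho_{P,\alpha}(u)^{d+\alpha}\,\mathrm{d}u \\
&= \int_{\mathbb{S}^{d-1}} \rho_K(u)^{-\alpha}\,\rho_{P,\alpha}(u)^{d+\alpha}\,\mathrm{d}u = d\,\tilde{V}_{-\alpha}(K, L_{P,\alpha}),
\end{align*}
the last equality being the definition \eqref{eq:dual-mixed-vol-definition} of $\tilde{V}_i$ with $i = -\alpha$ (note $d-(-\alpha)=d+\alpha$); the interchange of integrals and finiteness of every quantity follow from continuity of $\rho_{P,\alpha}$ and $\rho_K$ on the compact sphere, which also makes the hypothesis $\E_P[\|x\|_{\ell_2}]<\infty$ automatic in this regime. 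Applying the general dual mixed volume inequality of \cite{Lutwak1975} with index $i = -\alpha<0$, namely $\tilde{V}_{-\alpha}(K,L)^d \geqslant \vol_d(K)^{-\alpha}\vol_d(L)^{d+\alpha}$ with equality if and only if $K$ and $L$ are dilates, gives $\E_P[\|x\|_K^\alpha] \geqslant d\,\vol_d(K)^{-\alpha/d}\vol_d(L_{P,\alpha})^{(d+\alpha)/d}$, with equality if and only if $K$ is a dilate of $L_{P,\alpha}$. Over the set $\{K \in \Scal^d : \vol_d(K)=1\}$ the right-hand side is a positive constant, attained by the unique unit-volume dilate $K_{*,\alpha} = \vol_d(L_{P,\alpha})^{-1/d}L_{P,\alpha}$, which proves both existence and uniqueness; specializing $\alpha=1$ recovers $\rho_{P,1}=\rho_P$ and Theorem \ref{t:starbody_min}.

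The main obstacle I anticipate is purely a matter of invoking \cite{Lutwak1975} correctly: the dual mixed volume inequalities reverse direction across the critical indices $i\in\{0,d\}$, so one must check that $i=-\alpha$ (which ranges over all of $(-\infty,0)$ as $\alpha$ ranges over $(0,\infty)$) lies in the regime where $\tilde{V}_i(K,L)^d$ is \emph{bounded below} by the stated product of volumes --- this is the direction needed for a minimization --- and that the underlying H\"older/power-mean step is strict for every such $i$, so that equality forces $\rho_K$ and $\rho_{P,\alpha}$ to be proportional; both hold because $i/d \notin \{0,1\}$. Beyond this bookkeeping, the argument is a routine transcription of the $\alpha=1$ proof.
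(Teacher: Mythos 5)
Your proof is correct and takes essentially the same route as the paper: construct $L_{P,\alpha}$ from $\rho_{P,\alpha}$, rewrite $\E_P[\|x\|_K^\alpha]$ as $d\,\tilde{V}_{-\alpha}(K,L_{P,\alpha})$ by spherical integration, and invoke Theorem~2 of \cite{Lutwak1975} at indices $(i,j,k)=(-\alpha,0,d)$ to obtain the lower bound $d\vol_d(K)^{-\alpha/d}\vol_d(L_{P,\alpha})^{(d+\alpha)/d}$ with equality exactly when $K$ is a dilate of $L_{P,\alpha}$. One small caveat: your aside that $\E_P[\|x\|_{\ell_2}]<\infty$ becomes ``automatic'' is not justified when $\alpha<1$ (finiteness of $\int_0^\infty r^{d+\alpha-1}p(ru)\,\mathrm{d}r$ does not control $\int_0^\infty r^d p(ru)\,\mathrm{d}r$ for $r$ large), but since that finiteness is an explicit hypothesis of the theorem this does not affect the argument.
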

\begin{proof}[Proof of Theorem \ref{thm:alpha-hom-general-result}]
    Analogously to the proof of Theorem \ref{t:starbody_min}, it is straightforward to show the existence of a star body $L_{P,\alpha}$ with radial function $\rho_{P,\alpha}$. We now establish that the objective in \eqref{e:alpha-hom-problem} can be written as a dual mixed volume with $i = -\alpha$ (see equation \eqref{eq:dual-mixed-vol-definition}). In particular, using positive homogenity of the gauge and integrating in spherical coordinates yields \begin{align*}
    \E_P[\|x\|_K^{\alpha}] & = \int_{\R^d} \|x\|_K^{\alpha} p(x) \mathrm{d}x  = \int_{\mathbb{S}^{d-1}} \|u\|_K^{\alpha} \left(\int_0^{\infty} r^{d+\alpha-1}p(ru) \mathrm{d}r\right) \mathrm{d} u \\
    & = \int_{\mathbb{S}^{d-1}} \rho_K(u)^{-\alpha} \rho_{P,\alpha}(u)^{d+\alpha} \mathrm{d}u = d \tilde{V}_{-\alpha}(K,L_{P,\alpha}).
\end{align*} Theorem 2 in \cite{Lutwak1975} states that for any two star bodies $K,L \in \Scal^d$ and any $i,j,k \in \R$ such that $i < j < k$, $$\tilde{V}_{j}^{k-i}(K,L) \leqslant \tilde{V}_i^{k-j}(K,L) \tilde{V}_{k}^{j-i}(K,L),$$ with equality if and only if $K$ and $L$ are dilates. Setting $i = -\alpha,  j = 0,$ and $k = d$ gives $$\E_P[\|x\|_K^{\alpha}] = d\tilde{V}_{-\alpha}(K,L_{P,\alpha}) \geqslant d\vol_d(L_{P,\alpha})^{(d+\alpha)/d}\vol_d(K)^{-\alpha/d}$$ with equality if and only if $K$ is a dilate of $L_P^{\alpha}.$ Thus, $\E_P[\|x\|_K^{\alpha}]$ is minimized over the collection $\{K \in \Scal^d : \vol_d(K) = 1\}$ by $K_{*,\alpha} := \vol_d(L_{P,\alpha})^{-1/d}L_{P,\alpha}.$
\end{proof}

\paragraph{Convexity of the optimal $\alpha$-homogeneous regularizer.} Similar to the results in Section \ref{sec:opt_reg_convex}, we can also explicitly characterize when the optimal $\alpha$-homogeneous regularizer will be convex. The following result is a direction consequence of the previous Theorem:

\begin{cor}
    Let $P$ be a probability measure on $\R^d$. Suppose $P$ satisfies the assumptions of Theorem \ref{thm:alpha-hom-general-result} for some $\alpha > 0$ and define $\rho_{P,\alpha}$ as in equation \eqref{eq:rho_P_alpha}. If $x \mapsto 1/\rho_{P,\alpha}(x)^{\alpha}$ is a convex function on $\R^d$, then the optimal $\alpha$-homogeneous regularizer $\|\cdot\|^{\alpha}_{K_{*,\alpha}}$ is convex.
\end{cor}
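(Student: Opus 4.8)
The plan is to obtain this corollary as an immediate consequence of the explicit form of the optimal body $K_{*,\alpha}$ supplied by Theorem \ref{thm:alpha-hom-general-result}, in direct parallel with how Corollary \ref{cor:convex_opt} follows from Theorem \ref{t:starbody_min}. The only work is to rewrite the regularizer $\|\cdot\|_{K_{*,\alpha}}^{\alpha}$ in terms of the function $1/\rho_{P,\alpha}^{\alpha}$ appearing in the hypothesis.

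First I would recall two elementary facts from Section \ref{sec:preliminaries}: the gauge and radial functions of a star body are reciprocal, $\|u\|_K = 1/\rho_K(u)$, and gauges scale contravariantly under dilation, $\|x\|_{\lambda K} = \lambda^{-1}\|x\|_K$ for $\lambda > 0$. By Theorem \ref{thm:alpha-hom-general-result} there is a star body $L_{P,\alpha}$ whose radial function is $\rho_{P,\alpha}$, hence whose gauge is $\|\cdot\|_{L_{P,\alpha}} = 1/\rho_{P,\alpha}$, and the optimal body is the dilate $K_{*,\alpha} = \vol_d(L_{P,\alpha})^{-1/d}L_{P,\alpha}$. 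Combining these gives, for every $x \in \R^d$,
\[
\|x\|_{K_{*,\alpha}}^{\alpha} = \vol_d(L_{P,\alpha})^{\alpha/d}\,\|x\|_{L_{P,\alpha}}^{\alpha} = \vol_d(L_{P,\alpha})^{\alpha/d}\,\frac{1}{\rho_{P,\alpha}(x)^{\alpha}},
\]
where the identity is first checked on $\mathbb{S}^{d-1}$ and then extended to all of $\R^d$ by positive homogeneity, both sides being positively homogeneous of degree $\alpha$ and vanishing at the origin.

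The conclusion is then immediate: $\vol_d(L_{P,\alpha})^{\alpha/d}$ is a strictly positive constant and a nonnegative scalar multiple of a convex function is convex, so the hypothesis that $x \mapsto 1/\rho_{P,\alpha}(x)^{\alpha}$ is convex on $\R^d$ forces $\|\cdot\|_{K_{*,\alpha}}^{\alpha}$ to be convex. If desired, I would append the geometric remark that the hypothesis also makes $K_{*,\alpha}$ a genuine convex body, since the $1$-sublevel set of the convex function $1/\rho_{P,\alpha}^{\alpha}$ is exactly $L_{P,\alpha}$, whence $L_{P,\alpha}$ and its dilate $K_{*,\alpha}$ are convex. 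There is no substantive obstacle here --- all the content is in Theorem \ref{thm:alpha-hom-general-result} --- and the only point needing a word of care is that the reciprocal and dilation identities, which a priori are stated on the unit sphere, propagate to all of $\R^d$ by positive homogeneity.
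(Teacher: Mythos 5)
Your proof is correct and takes the same route the paper leaves implicit (the paper states this corollary without proof, calling it a direct consequence of Theorem \ref{thm:alpha-hom-general-result}): identify $\|\cdot\|_{L_{P,\alpha}} = 1/\rho_{P,\alpha}$ via the gauge--radial reciprocity, scale by the dilate factor to get $\|x\|_{K_{*,\alpha}}^{\alpha} = \vol_d(L_{P,\alpha})^{\alpha/d}/\rho_{P,\alpha}(x)^{\alpha}$, and observe that a positive multiple of a convex function is convex. Your care in noting that the pointwise identities on $\mathbb{S}^{d-1}$ extend to $\R^d$ by positive homogeneity of degree $\alpha$ is exactly the right detail to pin down, and the closing remark that the $1$-sublevel set of the convex function $1/\rho_{P,\alpha}^{\alpha}$ equals $L_{P,\alpha}$ correctly supplies the geometric counterpart that $K_{*,\alpha}$ itself is a convex body.
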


\paragraph{Optimal regularizer's dependence on $\alpha$.} In general, the geometry of the optimal regularizer's dependence on $\alpha$ can be complicated. For a broad family of distributions, however, there is a clearer dependence on $\alpha$. Suppose $P$ is a distribution with density $p(x) = \psi(\|x\|_L)$ for some function $\psi(\cdot)$ such that $\int_0^{\infty} r^{d+\alpha-1} \psi(r)\mathrm{d}r < \infty$ and star body $L$ with unit volume. Then, using a simple change-of-variables, we see that the radial function $\rho_{P,\alpha}$ in equation \eqref{eq:rho_P_alpha} satisfies the following: there exists a constant $c_{\psi,\alpha}$ that depends on $d,\psi$, and $\alpha$ such that for any $u \in \mathbb{S}^{d-1}$, \begin{align*}
    \rho_{P,\alpha}(u) = \left(\int_0^{\infty} r^{d+\alpha - 1}\psi(r\|u\|_L) \mathrm{d}r\right)^{\frac{1}{d+\alpha}} = \left(\|u\|_L^{-(d+\alpha)}\int_0^{\infty} r^{d+\alpha - 1}\psi(r) \mathrm{d}r\right)^{\frac{1}{d+\alpha}}= c_{\psi,\alpha} \|u\|_L^{-1} = c_{\psi,\alpha} \rho_L(u).
\end{align*} The power $\alpha$ only governs the value of $c_{\psi,\alpha}$ that scales $L$, and consequently the optimal unit-volume star body is simply $L$.  Hence, we obtain the conclusion that the optimal star body is $L$, regardless of the value of $\alpha > 0$, guaranteeing that the optimal regularizer is then $\|x\|_L^\alpha$. 
}


{ 
\paragraph{Remark on compactly supported measures.} Our results thus far demonstrate how one can identify an optimal positively homogeneous regularizer that promotes the structure in a given data distribution $P$. In some cases, however, the data distribution may be compactly supported and it is more natural to enforce this bounded support in downstream tasks via a constrained formulation rather than a regularized one, where one minimizes a data fidelity term subject to a constraint that the solution lies in a compact domain. It is then of interest in this setting to identify the optimal constraint set that promotes the structure in a given (compactly supported) distribution $P$. In analogy with our previous development, we consider the following set of distributions that are compactly supported:

$$\Dcal := \left\{p_K : \R^d \rightarrow \R\ |\ p_K(x) = \mathbf{1}_{\{x \in K\}}/ \mathrm{vol}_d(K),\ \mathrm{supp}(P) \subseteq K\ \text{is a star body}\right\}.$$ Identifying the best element of this class yields a constraint set that can be employed in a constrained formulation to solve downstream tasks. As before, we consider the following information-projection / maximum-likelihood approach to obtain the best element: \begin{align*}
    \argmin_{p_K \in \mathcal{D}} D_{\mathrm{KL}}(P || p_K)
\end{align*} Let $p(x)$ denote the density of $P$ on $\mathrm{supp}(P)$. Then, a direct calculation shows there exists a constant $C_P$ independent of $K$ such that for any $p_K \in \mathcal{D}$, \begin{align*}
    D_{\mathrm{KL}}(P || p_K) = \int_{\mathrm{supp}(P)} p(x)\log\left(\frac{p(x)}{p_K(x)}\right)\mathrm{d}x 
    & = C_P + \log \mathrm{vol}_d(K).
\end{align*} Hence the optimal uniform distribution $p_K$ will be the one such that $K$ has minimal volume: \begin{align*}
    \argmin_{p_K \in \mathcal{D}} D_{\mathrm{KL}}(P || p_K) = \argmin_{p_K \in \mathcal{D}} \mathrm{vol}_d(K).
\end{align*} When $\mathrm{supp}(P)$ is a star body, by monotonicity of the volume functional, the optimal solution will be $p_{\mathrm{supp}(P)}$. This shows that the optimal regularizer would be the projection onto the support of the distribution. When $\mathrm{supp}(P)$ is not a star body, the projection instead will be onto a ``star hull'' of $\mathrm{supp}(P)$ of minimal volume.

}






\section{Existence of Optimal Regularizers for General Data Distributions} \label{sec:general-existence-results}

In the preceding section, we characterized a unique optimal regularizer under certain conditions on the distribution $P$. For general $P$, it remains an open problem to characterize a unique star body that minimizes \eqref{e:opt_1}. However, we can prove existence of a solution to this variational problem for general $P$ by restricting the hypothesis class to contain only \emph{well-conditioned} star bodies, defined for any $r > 0$ by:
$$\Scal^d(r) := \{K \in \Scal^d : r B^d\subseteq \ker(K)\}.$$
Our results on the existence of minimizers of \eqref{e:opt_1} rely on continuity properties of the objective and compactness of the constraint set contained in $\mathcal{S}^d(r)$. First, in Section \ref{sec:cont-properties} we will show that the objective is Lipschitz continuous with respect to the radial metric $\delta(\cdot,\cdot)$. Then, in Section \ref{sec:compact_constraint} we show that our constraint sets of interest are compact by establishing Blaschke's Selection Theorem for bounded subsets of $\Scal^d(r)$ with respect to the radial metric.  
We will then state and prove the existence results in Section \ref{sec:minimizers-existence-proof}.

\subsection{Continuity Properties of the Objective Functional} \label{sec:cont-properties}
We begin by establishing continuity properties of our objective with respect to the radial metric over the collection $\Scal^d(r)$. When our sets are convex and bounded, we can also establish a Lipschitz bound with respect to the Hausdorff metric.
\begin{proposition}\label{prop:lip-cont-radial}
Suppose $K,L \in \Scal^d(r)$. Then for any $x,y \in \mathbb{S}^{d-1}$, we have \begin{align*}
    |\|x\|_K - \|y\|_L| \leqslant \frac{1}{r^2}\delta(K,L) + \frac{1}{r}\|x - y\|_{\ell_2}.
\end{align*} If, additionally, $K$ and $L$ are convex and $K,L \subseteq RB^d$, we have the following bound: \begin{align*}
    |\|x\|_K - \|y\|_L| \leqslant \frac{R}{r^3}d_H(K,L) + \frac{1}{r}\|x - y\|_{\ell_2}
\end{align*}
\end{proposition}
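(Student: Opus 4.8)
The plan is to prove the two bounds separately, with the radial-metric bound serving as the foundation. Throughout I would use the two elementary facts recorded in the preliminaries: for a star body $K$, $\|x\|_K = 1/\rho_K(x)$ on $\mathbb{S}^{d-1}$, and if $rB^d \subseteq \ker(K) \subseteq K$ then $rB^d \subseteq K$, so that $\rho_K(u) \geqslant r$ for every $u \in \mathbb{S}^{d-1}$; equivalently $\|u\|_K \leqslant 1/r$. (In the convex case I would also want the upper bound $\rho_K(u) \leqslant R$, equivalently $\|u\|_K \geqslant 1/R$, coming from $K \subseteq RB^d$.) The first inequality I would establish is the reverse-Lipschitz continuity of the gauge in its spatial argument: for a star body $K \in \Scal^d(r)$ and $x,y \in \mathbb{S}^{d-1}$, $|\|x\|_K - \|y\|_K| \leqslant \frac{1}{r}\|x-y\|_{\ell_2}$. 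This follows because $\|\cdot\|_K$ is convex and positively homogeneous on the convex set $rB^d$... but since $K$ need not be convex I would instead argue directly: $\ker(K)$ is a convex body containing $rB^d$, its gauge $\|\cdot\|_{\ker(K)} \geqslant \|\cdot\|_K$ pointwise, and on the sphere $\|x\|_K$ and $\|x\|_{\ker(K)}$ need not agree, so this needs care. The cleaner route: use that $\|z\|_K \leqslant 1$ iff $z \in K$, pick $\lambda = \|x\|_K$ so $x/\lambda \in \partial K$, and note $y = x + (y-x)$; since $(y-x) \in \|y-x\|_{\ell_2} B^d$ and $B^d \subseteq \frac{1}{r}\ker(K)$, one gets $y \in \lambda K + \frac{\|y-x\|_{\ell_2}}{r}\ker(K) \subseteq (\lambda + \frac{\|y-x\|_{\ell_2}}{r})K$ using star-shapedness with respect to $\ker(K)$ (this is where $\ker$ enters: $\lambda K + \mu \ker(K) \subseteq (\lambda+\mu)K$ when $\ker(K)$ is the star-center set). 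Hence $\|y\|_K \leqslant \|x\|_K + \frac{1}{r}\|x-y\|_{\ell_2}$, and by symmetry $|\|x\|_K - \|y\|_K| \leqslant \frac1r \|x-y\|_{\ell_2}$.

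The second ingredient is continuity in the body argument: for $K, L \in \Scal^d(r)$ and fixed $u \in \mathbb{S}^{d-1}$, $|\|u\|_K - \|u\|_L| = |1/\rho_K(u) - 1/\rho_L(u)| = \frac{|\rho_K(u)-\rho_L(u)|}{\rho_K(u)\rho_L(u)} \leqslant \frac{1}{r^2}|\rho_K(u) - \rho_L(u)| \leqslant \frac{1}{r^2}\|\rho_K - \rho_L\|_\infty = \frac{1}{r^2}\delta(K,L)$, using $\rho_K(u),\rho_L(u)\geqslant r$ and the identity $\delta(K,L) = \|\rho_K - \rho_L\|_\infty$. Then I would combine the two by a triangle inequality: $|\|x\|_K - \|y\|_L| \leqslant |\|x\|_K - \|x\|_L| + |\|x\|_L - \|y\|_L| \leqslant \frac{1}{r^2}\delta(K,L) + \frac{1}{r}\|x-y\|_{\ell_2}$, which is exactly the claimed first bound.

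For the convex case the only change is in controlling $|\|x\|_K - \|x\|_L|$ via the Hausdorff metric instead of $\delta$. When $K,L$ are convex, $\delta(K,L) = \|\rho_K - \rho_L\|_\infty$ still holds, and I want to bound this by $d_H(K,L)$ up to a factor involving $r$ and $R$. The standard relation is $\delta(K,L) \leqslant \frac{R}{r}\, d_H(K,L)$ for convex bodies sandwiched as $rB^d \subseteq K, L \subseteq RB^d$ — geometrically, a Hausdorff perturbation of size $d_H$ moves the boundary radially by at most roughly $\frac{\rho}{\text{(inradius)}} d_H \leqslant \frac{R}{r}d_H$. Then from the previous paragraph $|\|x\|_K - \|x\|_L| \leqslant \frac{1}{r^2}\delta(K,L) \leqslant \frac{R}{r^3} d_H(K,L)$, and adding the spatial term $\frac1r \|x-y\|_{\ell_2}$ gives the second bound. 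I would prove $\delta(K,L) \leqslant \frac{R}{r}d_H(K,L)$ by a direct argument: if $\varepsilon = d_H(K,L)$ then $K \subseteq L + \varepsilon B^d \subseteq L + \frac{\varepsilon}{r} \ker(L) = (1 + \frac{\varepsilon}{r})L$ (convexity makes $L + \mu L = (1+\mu)L$ and $\ker(L)=L$), so for each $u$, $\rho_K(u) \leqslant (1+\frac\varepsilon r)\rho_L(u) \leqslant (1 + \frac\varepsilon r) R$... wait, I want $\rho_K(u) - \rho_L(u) \leqslant \frac\varepsilon r \rho_L(u) \leqslant \frac\varepsilon r R = \frac{R}{r}\varepsilon$; by symmetry $\|\rho_K - \rho_L\|_\infty \leqslant \frac Rr \varepsilon$, as wanted.

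The main obstacle I anticipate is the spatial-Lipschitz step in the genuinely nonconvex setting — making precise and correctly justifying the inclusion $\lambda K + \mu \ker(K) \subseteq (\lambda + \mu) K$, i.e., that one may "grow" a star body by adding a multiple of its kernel and stay inside the corresponding dilate. This is the place where the well-conditioning hypothesis $rB^d \subseteq \ker(K)$ is genuinely used (not just $rB^d \subseteq K$), and it requires the observation that for $z \in \ker(K)$ and $w \in K$, the point $w + z$ lies in $2 \cdot \mathrm{conv}(\{w\} \cup \ker(K)) \subseteq 2K$-type reasoning, suitably homogenized. Everything else is bookkeeping with the two homogeneity facts and the identities $\|u\|_K = 1/\rho_K(u)$, $\delta = \|\rho_K - \rho_L\|_\infty$, and $d_H = \|h_K - h_L\|_\infty$ for convex bodies.
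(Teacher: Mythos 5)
Your proposal is correct and follows the paper's overall triangle-inequality decomposition, but it substitutes cleaner, more self-contained arguments in two of the three ingredients. For the spatial Lipschitz bound $|\|x\|_K - \|y\|_K| \leqslant \tfrac{1}{r}\|x-y\|_{\ell_2}$, the paper invokes structural results from Rubinov (Propositions 5.1 and 6.2 in \cite{Rubinov2000}), representing $K$ as a union of convex sets each containing $rB^d$ and taking an infimum of $1/r$-Lipschitz gauges; you instead prove it directly from the kernel inclusion $\lambda K + \mu \ker(K) \subseteq (\lambda+\mu)K$, which holds because $\tfrac{\lambda w + \mu z}{\lambda+\mu}$ lies on the segment $[z,w] \subseteq K$ whenever $z \in \ker(K)$ and $w \in K$ --- this is a tidy first-principles derivation that avoids the external citation. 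The radial-metric step ($|\|x\|_K - \|x\|_L| \leqslant \tfrac{1}{r^2}|\rho_K(x)-\rho_L(x)|$ using $\rho_K,\rho_L \geqslant r$) is identical to the paper's. For the convex case, the paper gives a projection-and-cone argument to bound $|\rho_K(x) - \rho_L(x)| \leqslant \tfrac{R}{r}\,d_H(K,L)$, whereas you derive the equivalent estimate $\delta(K,L) \leqslant \tfrac{R}{r}\,d_H(K,L)$ from the algebraic inclusion $K \subseteq L + \varepsilon B^d \subseteq L + \tfrac{\varepsilon}{r}L = (1+\tfrac{\varepsilon}{r})L$ (valid since $L$ is convex and $rB^d \subseteq L$), which then gives $\rho_K(u) - \rho_L(u) \leqslant \tfrac{\varepsilon}{r}\rho_L(u) \leqslant \tfrac{\varepsilon R}{r}$; this is considerably more transparent than the paper's cone argument and gives the same constant. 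Both approaches land on the same bounds with the same dependence on $r$ and $R$; yours trades geometric intuition for shorter, more elementary set-inclusion reasoning.
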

\begin{proof} We first note by the triangle inequality that \begin{align*}
    |\|x\|_K - \|y\|_L| & \leqslant |\|x\|_K - \|x\|_L| + |\|x\|_L - \|y\|_L|.
\end{align*} For the second term, note that since $rB^d \subseteq \ker(L)$, we have that by Proposition 6.2 in \cite{Rubinov2000} that $L = \bigcup_{u \in L} L_u$ where each $L_u$ is closed, convex and contains $rB^d$. Moreover, following the first part of the proof of Theorem 5.1 in \cite{Rubinov2000}, since $rB^d \subseteq L_u$ and each $L_u$ is convex, $\|\cdot\|_{L_u}$ is sublinear and Lipschitz with Lipschitz constant bounded by $1/r$. Finally, $\|\cdot\|_L$ coincides with $\inf_{u \in L} \|\cdot\|_{L_u}$ and since the infimum of a family $1/r$-Lipschitz functions is $1/r$-Lipschitz, it follows that $\|\cdot\|_L$ is $1/r$-Lipschitz.

We now focus on establishing a bound on $|\|x\|_K - \|x\|_L|$. First, note that since $r B^d \subseteq K$, we have that for any $\|x\|_{\ell_2} = 1$, $\rho_K(x) \geqslant \rho_{rB^d}(x) = r\rho_{B^d}(x) = r/\|x\|_{\ell_2} = r$. The same holds for $L$. Moreover, since $\|x\|_K = \rho_K(x)^{-1}$, we get \begin{align*}
    |\|x\|_K - \|x\|_L| \leqslant \frac{1}{\rho_K(x)\rho_L(x)}|\rho_K(x) - \rho_L(x)| \leqslant \frac{1}{r^2}|\rho_K(x) - \rho_L(x)|. 
\end{align*} Taking the supremum over $x \in \mathbb{S}^{d-1}$ yields the desired result.

For the convex case, we use the shorthand notation $\gamma := d_H(K,L)$.  By the definition of the Hausdorff metric, we have $K \subseteq L + \gamma B^d$ and $L \subseteq K + \gamma B^d$. Consider the ray generated by $x$.  We let $\tilde{x} := x / \| x \|_{K}$ and $\bar{x} := x/ \| x \|_{L}$ denote the intersection of the ray with the boundaries $\partial K$ and $\partial L$.  Our immediate goal is to show that $\|\bar{x} - \tilde{x} \|_{\ell_2} \leqslant \gamma R/r$. This will complete the proof.

First, we show that $1/\|x\|_L \leqslant 1/\|x\|_K + \gamma R/r$.  Suppose that $\bar{x} \notin K$. If $\bar{x} \in K$ then $1/\|x\|_L \leqslant1/\|x\|_K$, in which case the statement we wish to prove immediately holds.  Let $u$ be the projection of $\bar{x}$ onto $K$.  Let $D$ be the cone generated by all lines through $\tilde{x}$ and all points in $r B^d$.  We claim that $u \notin \mathrm{int}(D)$.  Suppose on the contrary that $u \in \mathrm{int}(D) $.  Consider the line from $u$ to $x$ extended.  Using the fact that $u \in \mathrm{int}(D) $, the line intersects $\mathrm{int}(r B^d)$.  Pick such a point inside $\mathrm{int}(r B^d)$ and consider an open ball inside $\mathrm{int}(r B^d)$, which is also inside $K$.  By convexity of $K$, we conclude that there is an open ball containing $\tilde{x}$ also in $K$.  However, this contradicts the original assumption that $\tilde{x} \in \partial K$.  Therefore we conclude that $u \notin \mathrm{int}(D)$.

By combining a simple trigonometric argument (such as by similarity) and using the fact that $u \notin \mathrm{int}(D)$, we have $$\|\bar{x}-\tilde{x}\|_{\ell_2} \leqslant\|\tilde{x}\|_{\ell_2} \frac{\|\bar{x}-u\|_{\ell_2}}{r} \leqslant\gamma R/r.$$ By repeating the same argument but with $K$ and $L$ switched, we have $|1/\|x\|_K - 1/\|x\|_L| \leqslant\gamma R/r$. Since $|1/\|x\|_K - 1/\|x\|_L| = |\rho_K(x) - \rho_L(x)|$, this completes the proof.
\end{proof} As a corollary, an application of Jensen's inequality and Proposition \ref{prop:lip-cont-radial} shows that the population objective functional is Lipschitz continuous over star bodies with sufficiently large inner width and convex bodies with large inner and outer widths.
\begin{corollary} \label{cor:lipschitz-pop-loss}
    Let $P$ be a distribution over $\R^d$ such that $\E_P \|x\|_{\ell_2} < \infty$. Then for any $K,L \in \Scal^d(r)$, 
    we have \begin{align*}
        |\E_P[\|x\|_K] - \E_P[\|x\|_L]| \leqslant \frac{\E_P\|x\|_{\ell_2}}{r^2} \delta(K,L).
    \end{align*} If, additionally, $K$ and $L$ are convex and $K,L \subseteq RB^d$, then \begin{align*}
    |\E_P[\|x\|_K] - \E_P[\|x\|_L]| \leqslant \frac{R\cdot\E_P\|x\|_{\ell_2}}{r^3}d_H(K,L).
\end{align*}
\end{corollary}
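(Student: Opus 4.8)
The plan is to obtain both inequalities by integrating the pointwise estimates of Proposition \ref{prop:lip-cont-radial} against $P$, after first reducing an arbitrary point $x \in \R^d$ to a point on the unit sphere via the positive homogeneity of the gauge.

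First I would record the elementary homogeneity identity: for $x \neq 0$ and $\hat{x} := x/\|x\|_{\ell_2} \in \mathbb{S}^{d-1}$, the degree-one homogeneity of $\|\cdot\|_K$ gives $\|x\|_K = \|x\|_{\ell_2}\,\|\hat{x}\|_K$. Since every $K \in \Scal^d(r)$ contains $rB^d$ (because $rB^d \subseteq \ker(K) \subseteq K$), we have $\rho_K \geqslant r$ on $\mathbb{S}^{d-1}$, hence $\|\hat{x}\|_K = 1/\rho_K(\hat{x}) \leqslant 1/r$ and therefore $\|x\|_K \leqslant \|x\|_{\ell_2}/r$ for all $x$. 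Combined with the hypothesis $\E_P\|x\|_{\ell_2} < \infty$, this guarantees $\E_P[\|x\|_K] < \infty$ for each $K \in \Scal^d(r)$, so all the quantities in the statement are finite and well-defined.

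Next, I would apply Proposition \ref{prop:lip-cont-radial} with the two points taken both equal to $\hat{x}$, so that the term $\tfrac{1}{r}\|x - y\|_{\ell_2}$ vanishes, yielding $|\|\hat{x}\|_K - \|\hat{x}\|_L| \leqslant \tfrac{1}{r^2}\delta(K,L)$ uniformly in $\hat{x} \in \mathbb{S}^{d-1}$. Multiplying through by $\|x\|_{\ell_2}$ and invoking the homogeneity identity gives the pointwise bound $|\|x\|_K - \|x\|_L| \leqslant \tfrac{1}{r^2}\|x\|_{\ell_2}\,\delta(K,L)$ for every $x$ (the case $x = 0$ being trivial since $\|0\|_K = 0$). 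Taking expectations and using Jensen's inequality in the form $|\E_P[Z]| \leqslant \E_P[|Z|]$ for the convex function $|\cdot|$ then produces the first claimed bound $|\E_P[\|x\|_K] - \E_P[\|x\|_L]| \leqslant r^{-2}\,\E_P\|x\|_{\ell_2}\,\delta(K,L)$.

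For the convex case I would repeat exactly the same two moves, but starting from the second inequality of Proposition \ref{prop:lip-cont-radial}, which applies when $K, L$ are convex with $K, L \subseteq RB^d$: taking the two points both equal to $\hat{x}$ gives $|\|\hat{x}\|_K - \|\hat{x}\|_L| \leqslant \tfrac{R}{r^3} d_H(K,L)$, and scaling by $\|x\|_{\ell_2}$ and integrating against $P$ exactly as before yields the stated Hausdorff-metric bound. I do not anticipate a genuine obstacle here: the substantive geometric content is already carried out in Proposition \ref{prop:lip-cont-radial}, and the only mild points of care are the finiteness of $\E_P[\|x\|_K]$ (handled by the containment $rB^d \subseteq K$) and the legitimacy of reducing to unit vectors via homogeneity.
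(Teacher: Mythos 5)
Your proof is correct and follows essentially the same route the paper indicates — apply Proposition~\ref{prop:lip-cont-radial} with $x=y$ on the sphere, scale by homogeneity, and integrate using $|\E_P[Z]|\leqslant\E_P|Z|$. The paper leaves this as a one-line remark (``an application of Jensen's inequality and Proposition~\ref{prop:lip-cont-radial}''), and your write-up is a faithful unpacking of exactly that argument.
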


\subsection{Compactness and Blaschke's Selection Theorem for Star Bodies}\label{sec:compact_constraint}

In this section we will establish that a constraint set $\Cfrak$ that is a closed and bounded subset of $\Scal^d(r)$ is compact. 
We will then prove that the set of volume normalized well-conditioned star bodies $$\Scal^d(r,1) := \{ K \in \Scal^d(r) : \vol_d(K) = 1\}$$ is compact by establishing that this constraint set is a closed and bounded subset of $\Scal^d(r)$.

To establish compactness of these constraint sets, we must show that sequences in $\frak{C}$ exhibit convergent subsequences with limit inside $\frak{C}$. In order to showcase such a result, we establish a version of Blaschke's Selection Theorem for a subclass of star bodies equipped with the radial metric. It has been shown that Blaschke's Selection Theorem holds for any bounded, infinite collection of star bodies \textit{with respect to the Hausdorff metric} $d_H$ \cite{Hirose1965}. However, since our Lipschitz condition only holds with respect to the radial metric, we need to establish a local compactness result that holds for this metric instead.

To prove such a result, we require the following result from \cite{Sojka13}, which establishes that on the space $\Scal^d(r)$, the radial and Hausdorff metrics are topologically equivalent to one another.
\begin{theorem}[Theorem 3.6 in \cite{Sojka13}] \label{thm:top-equiv}
For any $r > 0$, the radial metric $\delta$ and the Hausdorff metric $d_H$ are topologically equivalent to one another on $\Scal^d(r)$. That is, convergent sequences in $(\Scal^d(r), \delta)$ are the same as convergent sequences in $(\Scal^d(r), d_H)$.
\end{theorem}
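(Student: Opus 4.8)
The plan is to prove the two implications separately. The implication $\delta(K_n,K)\to 0 \Rightarrow d_H(K_n,K)\to 0$ is elementary and needs no hypothesis on the kernel: since the radial sum is just a sub-collection of the pairs defining the Minkowski sum, $L\,\tilde+\,\varepsilon B^d\subseteq L+\varepsilon B^d$ for every $L\in\Scal^d$ and every $\varepsilon\geqslant 0$; hence whenever $K\subseteq L\,\tilde+\,\varepsilon B^d$ and $L\subseteq K\,\tilde+\,\varepsilon B^d$ one also has $K\subseteq L+\varepsilon B^d$ and $L\subseteq K+\varepsilon B^d$, so $d_H(K,L)\leqslant\delta(K,L)$. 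The reverse implication $d_H(K_n,K)\to 0\Rightarrow\delta(K_n,K)\to 0$ is the substantive one, and it is precisely here that $rB^d\subseteq\ker(K_n)$ is used: without a uniform lower bound of this kind the conclusion is false, since a family of star bodies with a thin, deep ``dent'' (or spike) whose angular width shrinks to $0$ converges in the Hausdorff metric while its radial functions develop a jump discontinuity, so it cannot converge in $\delta$.

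For the reverse implication I would argue by an Arzel\`a--Ascoli compactness argument together with uniqueness of limits in the Hausdorff metric. Assume $d_H(K_n,K)\to 0$. First, fix $R_0$ with $K\subseteq R_0 B^d$; then $K_n\subseteq (R_0+1)B^d$ for all large $n$, and after enlarging the radius to accommodate the finitely many remaining bodies (each a compact star body, hence bounded) one obtains a single $R<\infty$ with $rB^d\subseteq K_n\subseteq RB^d$ for all $n$. On the sphere this gives $r\leqslant\rho_{K_n}(u)\leqslant R$ for all $u\in\mathbb{S}^{d-1}$ and all $n$. Next, since $K_n\in\Scal^d(r)$, the first bound of Proposition \ref{prop:lip-cont-radial} taken with $K=L=K_n$ (so the $\delta$-term vanishes) shows $\|\cdot\|_{K_n}$ is $1/r$-Lipschitz on $\mathbb{S}^{d-1}$; combining this with $r\leqslant\rho_{K_n}\leqslant R$ via $|\rho_{K_n}(u)-\rho_{K_n}(v)| = |\|u\|_{K_n}-\|v\|_{K_n}|/(\|u\|_{K_n}\|v\|_{K_n})$ shows $\{\rho_{K_n}\}$ is uniformly $(R^2/r)$-Lipschitz, hence equicontinuous and uniformly bounded on the compact space $\mathbb{S}^{d-1}$.

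By Arzel\`a--Ascoli and the usual subsequence principle it then suffices to show that every uniformly convergent subsequence $\rho_{K_{n_j}}\to\rho_*$ satisfies $\rho_*=\rho_K$. The limit $\rho_*$ is continuous and $\rho_*\geqslant r>0$, so it is the radial function of a unique star body $K_*$, and $\delta(K_{n_j},K_*)=\|\rho_{K_{n_j}}-\rho_*\|_\infty\to 0$. By the elementary implication already established, $d_H(K_{n_j},K_*)\to 0$; but also $d_H(K_{n_j},K)\to 0$, and since $d_H$ is a genuine metric on nonempty compact sets, limits are unique, so $K_*=K$, i.e.\ $\rho_*=\rho_K$. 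Therefore $\rho_{K_n}\to\rho_K$ uniformly, which is exactly $\delta(K_n,K)\to 0$.

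The main obstacle is the uniform equicontinuity of the $\rho_{K_n}$ — this is the single step where $rB^d\subseteq\ker(K_n)$ genuinely enters, and it is supplied by Proposition \ref{prop:lip-cont-radial} (whose proof in turn uses the decomposition $K=\bigcup_{u\in K}K_u$ into convex pieces each containing $rB^d$ from \cite{Rubinov2000}); everything else (uniform boundedness, Arzel\`a--Ascoli, uniqueness of Hausdorff limits) is routine. A secondary point to verify carefully is that a uniform limit of radial functions of bodies in $\Scal^d(r)$ is again the radial function of a star body, which follows from the uniform lower bound $\rho_{K_n}\geqslant r$ ensuring positivity of the limit. One could alternatively seek the quantitative estimate $\delta(K,L)\leqslant C_{r,R}\, d_H(K,L)$ on $\Scal^d(r)\cap\{L:L\subseteq RB^d\}$ by adapting the cone argument from the convex case of Proposition \ref{prop:lip-cont-radial}, replacing ``convexity of $K$'' by ``$rB^d\subseteq\ker(K)$'' (which implies $\mathrm{conv}(\{x\}\cup rB^d)\subseteq K$ for each $x\in K$); this would give topological equivalence directly, but the compactness route above avoids the delicate geometry.
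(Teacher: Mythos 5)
The paper does not actually prove this statement: it is cited verbatim as Theorem 3.6 of \cite{Sojka13}, so there is no ``paper proof'' to compare against. Your task is therefore to check whether the proposal is a valid free-standing argument, and it is.

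The easy direction is exactly right: the radial sum is contained in the Minkowski sum, so $K \subseteq L\,\tilde{+}\,\varepsilon B^d$ and $L \subseteq K\,\tilde{+}\,\varepsilon B^d$ imply the analogous Minkowski inclusions, giving $d_H \leqslant \delta$ unconditionally on all of $\Scal^d$. For the converse you correctly identify where $rB^d \subseteq \ker(K_n)$ enters, and your Arzel\`a--Ascoli route is sound. The chain is: Hausdorff convergence to a compact $K$ gives a uniform outer bound $R$ after finitely many indices; $rB^d\subseteq K_n$ gives $r \leqslant \rho_{K_n} \leqslant R$; the first bound in Proposition~\ref{prop:lip-cont-radial} with $K=L=K_n$ gives $1/r$-Lipschitzness of $\|\cdot\|_{K_n}$, which via $\rho = 1/\|\cdot\|$ and the lower bound $\|u\|_{K_n}\geqslant 1/R$ yields a uniform Lipschitz constant $R^2/r$ for $\rho_{K_n}$; Arzel\`a--Ascoli plus the subsequence principle, together with the already-established easy direction and uniqueness of Hausdorff limits, forces every subsequential uniform limit to be $\rho_K$, hence $\|\rho_{K_n}-\rho_K\|_\infty \to 0$. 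The only step meriting the flag you raise --- that the uniform limit is again a radial function of a star body --- is indeed settled by the uniform lower bound $\rho_{K_n}\geqslant r$. There is no circularity: Proposition~\ref{prop:lip-cont-radial} is proved in Section~\ref{sec:cont-properties} from \cite{Rubinov2000} and does not invoke Theorem~\ref{thm:top-equiv}.

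One small stylistic note: your compactness route establishes topological equivalence but not a quantitative modulus; the alternative cone argument you sketch at the end (bounding $\delta \leqslant C_{r,R}\,d_H$ on $\Scal^d(r)\cap\{K: K \subseteq RB^d\}$, adapting the convex-case reasoning in Proposition~\ref{prop:lip-cont-radial} using $\conv(\{x\}\cup rB^d)\subseteq K$) is likely closer in spirit to what the cited source does and would directly give the stronger, locally bi-Lipschitz statement. Either way, your proposal is a complete and correct proof of the statement as quoted.
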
 

With this result in hand, we can then prove Blaschke's Selection Theorem also holds with respect to the radial metric, specifically over the set $\Scal^d(r)$:
\begin{theorem}[Blaschke's Selection Theorem for the Radial Metric] \label{thm:BST-radial}
Fix $0 < r < \infty$ and let $\Cfrak$ be a bounded and closed subset of $\Scal^d(r)$. Let $(K_i)$ be a sequence of star bodies in $\Cfrak$. Then $(K_i)$ has a subsequence that converges in the radial and Hausdorff metric to a star body $K \in \Cfrak$.
\end{theorem}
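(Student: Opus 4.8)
The plan is to obtain the convergent subsequence from the Hausdorff-metric selection theorem for star bodies and then transfer convergence to the radial metric using the topological equivalence of the two metrics on $\Scal^d(r)$ from Theorem \ref{thm:top-equiv}. Since $\Cfrak$ is bounded there is a radius $R < \infty$ with $K \subseteq RB^d$ for every $K \in \Cfrak$; invoking the Hausdorff-metric Blaschke selection theorem for star bodies \cite{Hirose1965} (or simply the compactness of the hyperspace of nonempty compact subsets of $RB^d$ under $d_H$), we extract a subsequence $(K_{i_m})$ and a nonempty compact set $K \subseteq RB^d$ with $K_{i_m} \to K$ in the Hausdorff metric. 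It then remains to show that $K$ is in fact a member of $\Scal^d(r)$, that $K_{i_m} \to K$ in the radial metric as well, and that $K \in \Cfrak$.

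The crux is checking that the well-conditioning property $rB^d \subseteq \ker(K)$ passes to the Hausdorff limit. Fix $z \in rB^d$ and $y \in K$; the claim is that $[z,y] \subseteq K$. Since $K_{i_m} \to K$ in $d_H$ and each $K_{i_m}$ is compact, there exist $y_m \in K_{i_m}$ with $\|y_m - y\|_{\ell_2} = \dist(y, K_{i_m}) \to 0$. Because $z \in rB^d \subseteq \ker(K_{i_m})$ and $y_m \in K_{i_m}$, the segment $[z,y_m]$ lies in $K_{i_m}$, so for every $t \in [0,1]$ the point $w_m := (1-t)z + t y_m$ belongs to $K_{i_m}$ and satisfies $\|w_m - w\|_{\ell_2} = t\|y_m - y\|_{\ell_2} \to 0$, where $w := (1-t)z + t y$. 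Hence $\dist(w,K) \le \|w - w_m\|_{\ell_2} + d_H(K_{i_m},K) \to 0$, and since $K$ is closed, $w \in K$. As $t$, $y$, and $z$ were arbitrary, $rB^d \subseteq \ker(K)$. In particular $0 \in \ker(K)$, so $K$ is star-shaped with respect to the origin; $rB^d \subseteq K \subseteq RB^d$ confines $\rho_K$ between $r$ and $R$ on $\mathbb{S}^{d-1}$; and the argument in the proof of Proposition \ref{prop:lip-cont-radial} shows that $rB^d \subseteq \ker(K)$ forces $\|\cdot\|_K$ to be $1/r$-Lipschitz, hence continuous, so $\rho_K = 1/\|\cdot\|_K$ is continuous and positive. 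Thus $K$ is a star body and $K \in \Scal^d(r)$.

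With $K_{i_m}, K \in \Scal^d(r)$ now established, Theorem \ref{thm:top-equiv} applies: a sequence converging in the Hausdorff metric to a limit, all of whose terms and whose limit lie in $\Scal^d(r)$, converges there in the radial metric $\delta$ as well. (One direction is in fact elementary, since $d_H(A,B) \le \delta(A,B)$ for all star bodies $A,B$; the well-conditioning is used precisely for the reverse comparison underlying the topological equivalence.) Finally, $\Cfrak$ is closed in $\Scal^d(r)$ — equivalently for either metric, again by Theorem \ref{thm:top-equiv} — and $(K_{i_m})$ is a sequence in $\Cfrak$ converging to $K \in \Scal^d(r)$, so $K \in \Cfrak$, completing the proof. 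The main obstacle is the middle paragraph: establishing that $rB^d \subseteq \ker(K)$ is preserved in the limit and, with it, that the limit is a genuine star body (continuous, positive radial function) rather than merely a compact star-shaped set; once that is in hand, the remaining steps are bookkeeping via the cited selection theorem and the equivalence of the two metrics on $\Scal^d(r)$.
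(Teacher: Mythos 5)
Your proof takes essentially the same route as the paper's: extract a Hausdorff-convergent subsequence via \cite{Hirose1965}, then pass to the radial metric through the topological equivalence on $\Scal^d(r)$ from Theorem~\ref{thm:top-equiv}, and finally use closedness of $\Cfrak$ to place the limit in $\Cfrak$. The one place you go further is the middle paragraph, where you explicitly verify that the well-conditioning $rB^d \subseteq \ker(K)$ survives the Hausdorff limit (and hence that $K$ is a genuine member of $\Scal^d(r)$, with positive continuous radial function). The paper's proof of Theorem~\ref{thm:BST-radial} is silent on this point and leans entirely on the closedness hypothesis for $\Cfrak$; the verification you supply is precisely the argument the paper carries out separately in Lemma~\ref{lem:Scald-r-R-closed} for the particular case $\Cfrak = \Scal^d(r,1)$. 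Making it explicit within the selection theorem itself is a mild but genuine strengthening of the exposition, since it renders the conclusion independent of whether one reads ``$\Cfrak$ closed in $\Scal^d(r)$'' as closedness in the relative topology of $\Scal^d(r)$ or as closedness in the ambient hyperspace.
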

\begin{proof}
    From \cite{Hirose1965}, since $(K_i)$ is a sequence of star bodies contained in a ball, we know that there exists a subsequence, call it $(K_{i_n})$, and a star body $K$ such that $d_H(K_{i_n},K) \rightarrow 0$ as $i_n \rightarrow \infty$. Since $\Cfrak$ is closed, $K \in \Cfrak$. Finally, to complete the proof, we note that since $d_H$ and $\delta$ are topologically equivalent on $\Scal^d(r)$ via Theorem \ref{thm:top-equiv}, we have that convergence in $d_H$ is equivalent to convergence in $\delta$. Hence the subsequence $(K_{i_n})$ of $(K_i)$ satisfies $\delta(K_{i_n}, K) \rightarrow 0$ as $i_n \rightarrow \infty$ where $K \in \Cfrak$.   
\end{proof} This result aids in establishing favorable compactness properties of our constraint set $\Cfrak$. We collect such properties as a corollary here:
\begin{corollary} \label{cor:compactness-eps-net-SrR}
For $0 < r < \infty$, let $\Cfrak$ be a closed and bounded subset of $\Scal^d(r)$. Then, the metric space $(\Cfrak, \delta)$ is compact and for every $\varepsilon > 0$, there exists a finite set of star bodies $\Scal_{\varepsilon} \subset \Cfrak$ such that $$\sup_{K \in \Cfrak} \inf_{L \in \Scal_{\varepsilon}} \delta(K,L) \leqslant \varepsilon.$$ The same conclusion holds with respect to the Hausdorff metric $d_H$.
\end{corollary}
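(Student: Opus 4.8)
The plan is to deduce both assertions directly from the radial Blaschke Selection Theorem (Theorem~\ref{thm:BST-radial}), with the topological equivalence of the two metrics (Theorem~\ref{thm:top-equiv}) handling the Hausdorff-metric statement. First I would establish sequential compactness of $(\Cfrak,\delta)$: given an arbitrary sequence $(K_i) \subset \Cfrak$, since $\Cfrak$ is a closed and bounded subset of $\Scal^d(r)$, Theorem~\ref{thm:BST-radial} produces a subsequence converging in the radial metric (and simultaneously in $d_H$) to a star body $K$, and closedness of $\Cfrak$ forces $K \in \Cfrak$. Hence every sequence in $\Cfrak$ has a subsequence converging within $\Cfrak$, so $(\Cfrak,\delta)$ is sequentially compact; as sequential compactness and compactness coincide for metric spaces, $(\Cfrak,\delta)$ is compact. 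The same argument applied to the $d_H$-convergence guaranteed by Theorem~\ref{thm:BST-radial} shows $(\Cfrak,d_H)$ is compact as well (alternatively, one may invoke that $\delta$ and $d_H$ induce the same topology on $\Scal^d(r)$ by Theorem~\ref{thm:top-equiv} and transport compactness across).

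For the finite $\varepsilon$-net claim, I would use the standard fact that a compact metric space is totally bounded, being careful to keep the net inside $\Cfrak$. Fix $\varepsilon > 0$ and consider the open cover of $\Cfrak$ by the radial balls $\{\, K' \in \Cfrak : \delta(K,K') < \varepsilon \,\}$ indexed by $K \in \Cfrak$; compactness of $(\Cfrak,\delta)$ extracts a finite subcover with centers forming a finite set $\Scal_{\varepsilon} = \{L_1,\dots,L_N\} \subseteq \Cfrak$. By construction each $K \in \Cfrak$ lies in one of these balls, so $\inf_{L \in \Scal_{\varepsilon}} \delta(K,L) \leqslant \varepsilon$, and taking the supremum over $K \in \Cfrak$ yields the stated bound. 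Replacing radial balls by $d_H$-balls and using compactness of $(\Cfrak,d_H)$ gives the identical conclusion for the Hausdorff metric.

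I do not anticipate a serious obstacle: all the substantive work is already front-loaded into Theorems~\ref{thm:top-equiv} and~\ref{thm:BST-radial}, and what remains is the routine passage from (sequential) compactness to total boundedness. The only point that warrants a little care is ensuring the $\varepsilon$-net centers are chosen in $\Cfrak$ itself rather than merely covering $\Cfrak$ by balls with arbitrary centers; this is why one should form the cover from balls centered at points of $\Cfrak$ before extracting the finite subcover. A minor secondary check is that ``bounded'' here means contained in a common Euclidean ball $RB^d$, which is precisely the hypothesis under which Theorem~\ref{thm:BST-radial} applies.
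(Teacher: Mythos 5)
Your proposal is correct and follows essentially the same route as the paper: Theorem~\ref{thm:BST-radial} gives sequential compactness, hence compactness (metric spaces), hence total boundedness for the $\varepsilon$-net, with Theorem~\ref{thm:top-equiv} handling the Hausdorff-metric statement. The extra care you take to place the net centers inside $\Cfrak$ is a sensible clarification of a point the paper leaves implicit, but it does not change the argument.
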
 
\begin{proof}
By Theorem \ref{thm:BST-radial}, we have that the space $(\Cfrak, \delta)$ is sequentially compact, as every sequence has a convergent subsequence with limit in $\Cfrak$. On metric spaces, sequential compactness is equivalent to compactness. Since the space is compact, it is totally bounded, thus guaranteeing the existence of a finite $\varepsilon$-net for every $\varepsilon > 0$ as desired. Theorem \ref{thm:top-equiv} guarantees the extension to the Hausdorff metric holds.
\end{proof}
We now consider the particular case when $\frak{C} = \Scal^d(r,1)$.  In order to establish compactness using Corollary \ref{cor:compactness-eps-net-SrR}, we need to establish that this subset is closed and bounded. We first show that the volume constraint in $\Scal^d(r,1)$ is sufficient to establish that the set is bounded.
\begin{lemma} \label{lem:unif-bound}
   For any $r > 0$, the collection $\Scal^d(r,1)$ is a bounded subset of $\Scal^d(r)$. In particular, for $R_r := \frac{d+1}{r^{d-1}\kappa_{d-1}}$ where $\kappa_{d-1} := \vol_{d-1}(B^{d-1})$, $$\Scal^d(r,1)  \subseteq \{K \in \Scal^d(r) : K \subseteq R_rB^d\}.$$
\end{lemma}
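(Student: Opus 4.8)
The plan is to show that a volume-one well-conditioned star body cannot contain a point far from the origin: such a point, together with the inscribed ball $rB^d \subseteq \ker(K)$, spans a cone inside $K$ whose volume grows linearly with the distance of that point from the origin, and this volume must stay below $1$.

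First I would fix $K \in \Scal^d(r,1)$ and an arbitrary nonzero $x \in K$ (the point $0$ is trivially inside $R_r B^d$), and set $\rho := \|x\|_{\ell_2}$. Let $D$ be the $(d-1)$-dimensional disk of radius $r$ centered at the origin and lying in the hyperplane $x^{\perp}$ through the origin orthogonal to $x$. Then $D \subseteq rB^d \subseteq \ker(K)$, so by the definition of the kernel we have $[z,x] \subseteq K$ for every $z \in D$; consequently the solid cone $C_x := \bigcup_{z \in D}[z,x] = \conv(\{x\} \cup D)$ (the last equality holding since $D$ is convex) is contained in $K$.

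Next I would compute $\vol_d(C_x)$ with the standard pyramid volume formula: the apex $x$ lies at distance $\rho$ from the hyperplane $x^{\perp}$ containing $D$, and slicing $C_x$ by hyperplanes parallel to $x^{\perp}$ gives cross-sections that are scaled copies of $D$, so $\vol_d(C_x) = \frac{\rho}{d}\,\vol_{d-1}(D) = \frac{\rho}{d}\,\kappa_{d-1}r^{d-1}$. By monotonicity of Lebesgue measure, $\frac{\rho}{d}\,\kappa_{d-1}r^{d-1} = \vol_d(C_x) \leqslant \vol_d(K) = 1$, hence $\rho \leqslant \frac{d}{\kappa_{d-1}r^{d-1}} < \frac{d+1}{r^{d-1}\kappa_{d-1}} = R_r$. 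Since $x \in K$ was arbitrary, this gives $K \subseteq R_r B^d$, which is exactly the claim, and in particular $\Scal^d(r,1)$ is a bounded subset of $\Scal^d(r)$.

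I do not expect a genuine obstacle in this argument; the only steps needing (routine) care are the identification $\conv(\{x\}\cup D) = \bigcup_{z\in D}[z,x]$ together with its containment in $K$ via the definition of $\ker(K)$, and the cone volume computation. One could sharpen the constant by taking the convex hull of $x$ with the full ball $rB^d$ rather than with the equatorial disk $D$, but the disk already yields the stated bound $R_r$ and keeps the proof short.
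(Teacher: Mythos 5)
Your proof is correct and takes essentially the same route as the paper: inscribe in $K$ a cone with apex at the far point and base the equatorial $(d-1)$-disk of radius $r$ lying in $\ker(K)$, then compare the cone's volume to $\vol_d(K)=1$. One small point of comparison: the paper writes $\vol_d(C(K)) = R(K)\cdot\vol_{d-1}(rB^{d-1})/(d+1)$, whereas the correct cone-volume formula in $\R^d$ has denominator $d$, exactly as you use; this gives you the sharper intermediate bound $\rho \leqslant d/(\kappa_{d-1}r^{d-1})$, from which $\rho < R_r$ follows immediately, so both arguments arrive at the stated inclusion $K \subseteq R_r B^d$.
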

\begin{proof}
Let $K \in \Scal^d(r,1)$ and define $R(K) := \min\{R \geqslant 0 : K \subseteq RB^d\}$. For each $K$, $R(K) < \infty$, however we need to obtain a uniform bound $R_r$ such that $R(K) \leqslant R_r$ for all $K \in \Scal^d(r, 1)$. Now, let $x \in \partial K$ be such that $\|x\|_{\ell_2} = R(K)$, and consider the cone $C(K) := \conv(\{r B^{d-1} \cap x^{\perp} , x\})$ where $\conv(\cdot)$ denotes the convex hull. Since $rB^{d-1} \subseteq \mathrm{ker}(K)$, $C(K) \subseteq K$. By the volume constraint $\vol_d(K) = 1$, we have that
$\vol_d(C(K)) \leqslant \vol_d(K) = 1$.  This then implies that $$\vol_d(C(K)) = R(K) \cdot \frac{\vol_{d-1}(rB^{d-1}) )}{d+1} \leqslant 1.$$ Since this bound holds for any $K \in \Scal^d(r, 1)$, we conclude by observing the uniform bound \begin{align*}
  \sup_{K \in \Scal^d(r, 1)} R(K) \leqslant \frac{d+1}{r^{d-1}\vol_{d-1}(B^{d-1})} =: R_r.
  \end{align*}
\end{proof}
\noindent Lastly we show that the constraint set $\Scal^d(r,1)$ is closed under the Hausdorff topology. 
\begin{lemma} \label{lem:Scald-r-R-closed}
For any $0 < r < \infty$, the set $\Scal^d(r,1)$ is closed under the Hausdorff topology.
\end{lemma}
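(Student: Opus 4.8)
The plan is to show that $\Scal^d(r,1)$ is sequentially closed in the Hausdorff metric: given a sequence $(K_n)\subseteq \Scal^d(r,1)$ with $K_n \to K$ in $d_H$ for some compact set $K$, I would verify the three defining properties of membership in $\Scal^d(r,1)$ — namely that $K$ is a star body, that $rB^d\subseteq \ker(K)$, and that $\vol_d(K)=1$. The one step requiring genuine care is showing that the limit $K$ is a \emph{bona fide} star body, i.e.\ has a positive and continuous radial function, rather than being merely a compact star-shaped set; this is precisely where the uniform well-conditioning hypothesis $rB^d\subseteq\ker(K_n)$ does essential work, since Hausdorff limits of arbitrary (non-well-conditioned) star bodies can fail to be star bodies (thin-spike examples), and it is also what makes the volume functional behave continuously along the sequence.

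First I would extract the elementary consequences of Hausdorff convergence. By Lemma~\ref{lem:unif-bound} every $K_n$ satisfies $rB^d\subseteq K_n\subseteq R_rB^d$, and since such sandwich inclusions pass to Hausdorff limits — if $A\subseteq K_n$ for all $n$ then $A\subseteq K$, and if $K_n\subseteq A$ for all $n$ with $A$ closed then $K\subseteq A$ — the limit $K$ is a nonempty compact set with $rB^d\subseteq K\subseteq R_rB^d$. Next I would verify $rB^d\subseteq\ker(K)$ by a segment-approximation argument: fixing $z\in rB^d$ and $y\in K$, I would choose $y_n\in K_n$ with $y_n\to y$; then for every $t\in[0,1]$ the point $(1-t)z+ty_n$ lies in $K_n$ because $z\in rB^d\subseteq\ker(K_n)$, and since these points lie in $K_n$ and converge (as $n\to\infty$) to $(1-t)z+ty$, the latter lies in $K$ by Hausdorff convergence. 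Hence $[z,y]\subseteq K$ for every $y\in K$, and as $z\in rB^d\subseteq K$ this gives $z\in\ker(K)$.

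With $rB^d\subseteq\ker(K)$ in hand, $K$ is a compact set with $rB^d$ in its kernel, so the reasoning from the proof of Proposition~\ref{prop:lip-cont-radial} applies verbatim: via the decomposition $K=\bigcup_{u\in K}K_u$ into closed convex sets each containing $rB^d$ (Proposition~6.2 in~\cite{Rubinov2000}), the gauge $\|\cdot\|_K=\inf_{u\in K}\|\cdot\|_{K_u}$ is $1/r$-Lipschitz, hence continuous, and it is bounded below by $1/R_r$ on $\mathbb{S}^{d-1}$ since $K\subseteq R_rB^d$; therefore $\rho_K=1/\|\cdot\|_K$ is positive and continuous, so $K$ is a star body, and in fact $K\in\Scal^d(r)$. (Alternatively, one may invoke the selection theorem of~\cite{Hirose1965} for the $d_H$-limit exactly as in the proof of Theorem~\ref{thm:BST-radial}, using uniqueness of Hausdorff limits.)

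Finally I would upgrade the convergence and close the volume constraint. Since $K$ and all $K_n$ lie in $\Scal^d(r)$ and $K_n\to K$ in $d_H$, Theorem~\ref{thm:top-equiv} gives $\delta(K_n,K)\to 0$, i.e.\ $\|\rho_{K_n}-\rho_K\|_\infty\to 0$; then, since $r\leqslant\rho_{K_n},\rho_K\leqslant R_r$ pointwise on $\mathbb{S}^{d-1}$, the identity $\vol_d(L)=\frac1d\int_{\mathbb{S}^{d-1}}\rho_L(u)^d\,\mathrm{d}u$ together with the elementary bound $|a^d-b^d|\leqslant dR_r^{d-1}|a-b|$ for $a,b\in[r,R_r]$ yields $\vol_d(K_n)\to\vol_d(K)$, so $\vol_d(K)=1$. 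Combining the three steps, $K\in\Scal^d(r,1)$, which proves the set is Hausdorff-closed. I expect the main obstacle to be the third paragraph — establishing that the limit is a genuine star body and not just a compact star-shaped set — with the well-conditioning hypothesis being indispensable there; the volume step in the last paragraph also depends on it implicitly (through Theorem~\ref{thm:top-equiv}), since the volume functional is not Hausdorff-continuous on general compact sets.
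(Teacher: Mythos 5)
Your proof is correct and arrives at the same conclusion, but it takes a noticeably more direct route at the step where the two arguments genuinely diverge: establishing $rB^d \subseteq \ker(K)$. The paper's proof extracts a Blaschke-convergent subsequence of the kernels $\ker(K_n)$ to produce a convex body $\tilde K$, proves $\tilde K \subseteq \ker(K)$ by a segment-approximation argument, and then observes that $rB^d \subseteq \ker(K_{i_n})$ passes to $rB^d \subseteq \tilde K$. You cut out the middleman $\tilde K$ entirely: fixing $z \in rB^d$ and $y \in K$, picking $y_n \in K_n$ converging to $y$, and noting that each convex combination $(1-t)z + ty_n$ lies in $K_n$ and converges to $(1-t)z + ty$, which therefore lies in $K$ — this directly verifies $z \in \ker(K)$ without invoking Theorem~\ref{thm:Blaschke} or the convexity of kernels at all. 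Your version is shorter and more self-contained; the paper's is arguably organized to highlight that kernels of well-conditioned star bodies form a Blaschke-compact family of convex bodies, a structural fact it leans on elsewhere. You are also more explicit than the paper on two points it leaves tacit: (i) that the Hausdorff limit is in fact a star body with positive continuous radial function — you derive this from the Rubinov decomposition already used in Proposition~\ref{prop:lip-cont-radial}, whereas the paper silently relies on the selection theorem of \cite{Hirose1965} quoted in Theorem~\ref{thm:BST-radial}; and (ii) that $\vol_d(K_n)\to\vol_d(K)$ — you justify this via Theorem~\ref{thm:top-equiv} and the integral formula $\vol_d(L)=\frac1d\int_{\mathbb{S}^{d-1}}\rho_L(u)^d\,\mathrm{d}u$, whereas the paper asserts ``continuity of the volume functional'' without comment, which is false for general compact sets under $d_H$ and genuinely requires the well-conditioning hypothesis.
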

\begin{proof}
Let $(L_i)$ be a sequence in $\Scal^d(r,1)$ such that $L_i \rightarrow L$ in the Hausdorff metric. We claim that $L \in \Scal^d(r,1)$. We first show $r B^d \subseteq \ker(L)$. Note that $r B^d \subseteq \ker(L_{i})$ for each $i$ so that $\ker(L_i)$ has non-empty interior. Moreover, since each $L_i$ is closed, so is $\ker(L_i)$. Lastly, the sequence $(L_i)$ is bounded uniformly by Lemma \ref{lem:unif-bound}, so the sequence $(\ker(L_i))$ is as well. Thus, by Blaschke's Selection Theorem for convex bodies (Theorem \ref{thm:Blaschke}), there exists a subsequence, call it $(\ker(L_{i_{n}}))$, that is convergent in the Hausdorff metric to some convex body $\tilde{L}$. We claim that $\tilde{L} \subseteq \ker(L)$. Fix $v \in \tilde{L}$ and let $x \in L$ be arbitrary. We will show that $[v,x] \subseteq L$. Since $\ker(L_{i_n}) \rightarrow \tilde{L}$ and $L_i \rightarrow L$ in the Hausdorff metric, there exists sequences $(v_{i_n})$ and $(x_{i_n})$ such that $v_{i_n} \in \ker(L_{i_n})$ and $x_{i_n} \in L_{i_n}$ for each $i_n$ and $v_{i_n} \rightarrow v$ and $x_{i_n} \rightarrow x$. Then the line segments $[v_{i_n}, x_{i_n}] \rightarrow [v,x]$ in the Hausdorff metric where $[v_{i_n}, x_{i_n}] \subseteq L_{i_n}$ since each $v_{i_n} \in \ker(L_{i_n})$. Hence we must have that $[v,x] \subseteq L$. Since $x \in L$ was arbitrary, we must have that $v \in \ker(L)$ as desired so $\tilde{L} \subseteq \ker(L)$. Thus, since $r B^d\subseteq \ker(L_{i_n})$ for all $i_n$ and $\ker(L_{i_n}) \rightarrow \tilde{L}$, we obtain $r B^d \subseteq \tilde{L} \subseteq \ker(L)$ as claimed. We finally note that since each $L_i$ satisfies $\vol_d(L_i) = 1$, continuity of the volume functional gives $\vol_d(L) =1$. We conclude that $L \in \Scal^d(r,1).$ 
\end{proof}

\subsection{Existence of Minimizers}\label{sec:minimizers-existence-proof}

We now state and prove our main results on the existence of minimizers using the results above. 
\begin{theorem}\label{thm:star_existence}
Suppose $P$ is such that $\E_P[\|x\|_{\ell_2}] < \infty$. If $\Cfrak \subseteq \Scal^d(r)$ is closed and bounded, then there exists a solution to the following optimization problem:
\begin{align}\label{e:wellconditioned_opt_subset}
    \argmin_{K \in \Cfrak} \E_P[\|x\|_K].
\end{align}
In particular, for fixed $0 < r < \infty$, a solution to the following optimization problem exists:
\begin{align}\label{e:wellconditioned_opt_1}
    \argmin_{K \in \Scal^d(r): \mathrm{vol}_d(K) = 1} \E_P[\|x\|_K].
\end{align}
\end{theorem}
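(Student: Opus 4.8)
The plan is to invoke the direct method of the calculus of variations: combine the compactness of the feasible set established in Corollary~\ref{cor:compactness-eps-net-SrR} with the continuity of the objective established in Corollary~\ref{cor:lipschitz-pop-loss}, and extract a minimizer from a minimizing sequence. Before that, I would first check that the infimum is a well-defined finite number. Nonnegativity of $\E_P[\|x\|_K]$ is immediate since the gauge is nonnegative. For finiteness, note that any $K \in \Scal^d(r)$ satisfies $rB^d \subseteq \ker(K) \subseteq K$, so $\|x\|_K \leqslant \|x\|_{rB^d} = \|x\|_{\ell_2}/r$ pointwise, whence $\E_P[\|x\|_K] \leqslant \E_P[\|x\|_{\ell_2}]/r < \infty$ by hypothesis. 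Thus $m := \inf_{K \in \Cfrak} \E_P[\|x\|_K]$ is a finite nonnegative real, and we may choose a minimizing sequence $(K_i) \subseteq \Cfrak$ with $\E_P[\|x\|_{K_i}] \to m$.

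Next, since $\Cfrak$ is a closed and bounded subset of $\Scal^d(r)$, Corollary~\ref{cor:compactness-eps-net-SrR} tells us that $(\Cfrak, \delta)$ is a compact metric space, so $(K_i)$ admits a subsequence $(K_{i_n})$ converging in the radial metric to some $K_\infty \in \Cfrak$. Applying the Lipschitz estimate of Corollary~\ref{cor:lipschitz-pop-loss} along this subsequence gives $|\E_P[\|x\|_{K_{i_n}}] - \E_P[\|x\|_{K_\infty}]| \leqslant (\E_P[\|x\|_{\ell_2}]/r^2)\,\delta(K_{i_n},K_\infty) \to 0$, so $\E_P[\|x\|_{K_\infty}] = \lim_n \E_P[\|x\|_{K_{i_n}}] = m$. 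Hence $K_\infty$ attains the infimum and solves \eqref{e:wellconditioned_opt_subset}.

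For the particular claim \eqref{e:wellconditioned_opt_1}, I would verify that $\Scal^d(r,1)$ satisfies the hypotheses of the general statement, i.e., that it is a closed and bounded subset of $\Scal^d(r)$. Boundedness is exactly Lemma~\ref{lem:unif-bound}, which gives $\Scal^d(r,1) \subseteq \{K \in \Scal^d(r) : K \subseteq R_r B^d\}$ with $R_r = (d+1)/(r^{d-1}\kappa_{d-1})$. Closedness in the Hausdorff topology is Lemma~\ref{lem:Scald-r-R-closed}, and by the topological equivalence of $d_H$ and $\delta$ on $\Scal^d(r)$ (Theorem~\ref{thm:top-equiv}), this is also closedness in the radial topology. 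Applying the general result with $\Cfrak = \Scal^d(r,1)$ then yields the existence of a minimizer.

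I do not expect a genuine obstacle here: the substantive work — the Blaschke-type selection theorem for the radial metric (Theorem~\ref{thm:BST-radial}) and the Lipschitz estimate for the objective (Proposition~\ref{prop:lip-cont-radial} and Corollary~\ref{cor:lipschitz-pop-loss}) — has already been carried out in the preceding subsections. The only point requiring care is bookkeeping about which topology ``closed'' refers to, and this is resolved by Theorem~\ref{thm:top-equiv}; everything else is the standard compactness-plus-continuity argument.
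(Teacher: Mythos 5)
Your argument is correct and is essentially the same compactness-plus-continuity argument the paper gives: Corollary~\ref{cor:lipschitz-pop-loss} for continuity of the objective, Corollary~\ref{cor:compactness-eps-net-SrR} for compactness of $\Cfrak$, and Lemmas~\ref{lem:unif-bound} and~\ref{lem:Scald-r-R-closed} to verify that $\Scal^d(r,1)$ is closed and bounded. Your explicit minimizing-sequence bookkeeping and the preliminary check that the infimum is finite are slightly more detailed than what the paper writes, but not a different route.
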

\begin{proof}
The result is a direct consequence of the fact our objective is continuous and the constraint set is compact. Indeed, using Corollary \ref{cor:lipschitz-pop-loss}, we have that the objective functional $K \mapsto \E_P[\|x\|_K]$ is $\frac{\E_P[\|x\|_{\ell_2}]}{r^2}$-Lipschitz over $\Cfrak$. Moreover, the constraint set $\Cfrak$ is compact by Corollary \ref{cor:compactness-eps-net-SrR}. As we are minimizing a continuous functional over a compact domain, the existence of minimizers is immediate. Combining Lemma \ref{lem:unif-bound} and Lemma \ref{lem:Scald-r-R-closed} shows that $\Scal^d(r,1)$ is a closed and bounded subset of $\Scal^d(r)$, meaning that the existence of a minimizer holds in this case as well.
\end{proof}

The existence of a minimizer to \eqref{e:wellconditioned_opt_subset} can be applied to the case when $\Cfrak$ is restricted to well-conditioned convex bodies defined by
\begin{align*}
    \Ccal^d(r) := \{K \in \Ccal^d :  r B^d \subseteq K\}.
\end{align*} We highlight this case because for computational and modeling considerations, one may wish to restrict to searching over convex regularizers. For instance, enforcing convexity can help ensure that the optimization framework to solve downstream tasks such as inverse problems will be more computationally tractable. There may also be further domain knowledge suggesting that convexity is the correct structure for the data distribution of interest. We will also consider in Sections \ref{sec:convergence-of-minimizers} and \ref{sec:stat-learning} empirical and population risk minimizers over natural subclasses of convex bodies, for which the following result ensures existence.
\begin{corollary}\label{cor:convex-minimzers}
Fix $0 < r < \infty$ and suppose $P$ is such that $\E_P[\|x\|_{\ell_2}] < \infty$. Suppose $\Cfrak \subseteq \Ccal^d(r)$ is closed and bounded. Then a minimizer to the optimization problem exists: \begin{align*}
       \min_{K \in \Cfrak} \E_P[\|x\|_K].
   \end{align*}
\end{corollary}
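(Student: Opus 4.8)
The plan is to observe that this corollary is an immediate specialization of Theorem \ref{thm:star_existence}. The key point is that every convex body is star-shaped with respect to each of its own points, so the kernel of a convex body $K$ equals all of $K$; in particular, if $K \in \Ccal^d(r)$, meaning $r B^d \subseteq K$, then $r B^d \subseteq K = \ker(K)$, so $K \in \Scal^d(r)$. This gives the inclusion $\Ccal^d(r) \subseteq \Scal^d(r)$.

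First I would check that $\Ccal^d(r)$ is itself a closed subset of $\Scal^d(r)$: the Hausdorff limit of convex bodies is convex, and the containment $r B^d \subseteq K$ passes to Hausdorff (equivalently, by Theorem \ref{thm:top-equiv}, radial) limits, so the limit again lies in $\Ccal^d(r)$. Consequently, if $\Cfrak \subseteq \Ccal^d(r)$ is closed and bounded, then it is also a closed and bounded subset of $\Scal^d(r)$.

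Then I would simply invoke Theorem \ref{thm:star_existence}: since $\E_P[\|x\|_{\ell_2}] < \infty$ and $\Cfrak$ is a closed and bounded subset of $\Scal^d(r)$, the objective $K \mapsto \E_P[\|x\|_K]$ is Lipschitz on $\Cfrak$ by Corollary \ref{cor:lipschitz-pop-loss} and $\Cfrak$ is compact by Corollary \ref{cor:compactness-eps-net-SrR}, so a minimizer exists. If one prefers to argue directly in the Hausdorff metric on convex bodies, boundedness of $\Cfrak$ furnishes an outer radius $R$ with $\Cfrak \subseteq R B^d$, and the second (Hausdorff) Lipschitz bound in Corollary \ref{cor:lipschitz-pop-loss} together with Blaschke's Selection Theorem (Theorem \ref{thm:Blaschke}) yields the same conclusion. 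There is essentially no obstacle here: the only things to verify are the elementary inclusion $\Ccal^d(r) \subseteq \Scal^d(r)$ and that closedness transfers, both of which are routine.
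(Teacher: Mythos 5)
Your proof is correct and follows essentially the same route as the paper's: both observe that $\Ccal^d(r)$ is a closed subset of $\Scal^d(r)$ (so that a closed and bounded $\Cfrak \subseteq \Ccal^d(r)$ is also closed and bounded in $\Scal^d(r)$) and then invoke Theorem~\ref{thm:star_existence}, and both note the alternative of arguing directly in the Hausdorff metric on convex bodies via Blaschke's Selection Theorem. Your write-up additionally spells out why $\Ccal^d(r) \subseteq \Scal^d(r)$ (namely $\ker(K)=K$ for convex $K$) and why closedness transfers, which the paper leaves implicit but which are both routine.
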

\begin{proof}Corollary \ref{cor:convex-minimzers} follows from Proposition \ref{thm:star_existence} because $\Ccal^d(r)$ is a closed subset of $\Scal^d(r)$, but it can also be proved directly by appealing to the corresponding results for convex bodies equipped with the Hausdorff metric.\end{proof}
At this point, it is natural to ask how Theorem \ref{thm:star_existence} relates to the results in Section \ref{sec:minimizers-star-bodies}. In particular, if $P$ satisfies the conditions of Theorem \ref{t:starbody_min}, when is the solution to \eqref{e:opt_1} equal to the solution of \eqref{e:wellconditioned_opt_1} for some $r > 0$? The next result answers this question using properties of the radial function $\rho_P$. 
\begin{proposition}
Let $P$ satisfy the conditions of Theorem \ref{t:starbody_min}, and define $\rho_P$ as in \eqref{e:rhoP}. If $x \mapsto 1/\rho_P(x)$ is Lipschitz, then there exists an $r > 0$ such that the unique minimizer of \eqref{e:opt_1} is also the unique minimizer of \eqref{e:wellconditioned_opt_1}.
\end{proposition}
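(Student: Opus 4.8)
The plan is to reduce the claim to showing that the unconstrained optimal body $K_* = \vol_d(L_P)^{-1/d}L_P$ from Theorem \ref{t:starbody_min} already belongs to $\Scal^d(r)$ for some $r > 0$. Granting this, the conclusion is immediate: by Theorem \ref{t:starbody_min}, $K_*$ is the \emph{unique} minimizer of $K \mapsto \E_P[\|x\|_K]$ over \emph{all} unit-volume star bodies, and $\Scal^d(r,1)$ is a subcollection of these that contains $K_*$; hence $K_*$ is also the unique minimizer over $\Scal^d(r,1)$, which is precisely problem \eqref{e:wellconditioned_opt_1}.

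So the whole proof comes down to producing such an $r$. First I would recall from the construction inside the proof of Theorem \ref{t:starbody_min} that $x \mapsto 1/\rho_P(x)$ is exactly the gauge $\|\cdot\|_{L_P}$ of $L_P$, taken with its degree-one positively homogeneous extension to $\R^d$ (and value $0$ at the origin). Thus the hypothesis is that $\|\cdot\|_{L_P}$ is $C$-Lipschitz on $\R^d$ for some finite $C$. The key step is then the claim $(1/C) B^d \subseteq \ker(L_P)$. To prove it directly, I would fix $v$ with $\|v\|_{\ell_2} \leqslant 1/C$ and an arbitrary $y \in L_P$ (so $\|y\|_{L_P} \leqslant 1$), and for $t \in [0,1]$ bound, using the Lipschitz estimate on the pair $tv + (1-t)y$, $(1-t)y$ together with positive homogeneity of the gauge,
\[
\|tv + (1-t)y\|_{L_P} \;\leqslant\; \|(1-t)y\|_{L_P} + C\|tv\|_{\ell_2} \;=\; (1-t)\|y\|_{L_P} + Ct\|v\|_{\ell_2} \;\leqslant\; (1-t) + t \;=\; 1 .
\]
This shows $[v,y] \subseteq L_P$; since $y$ was arbitrary, $v \in \ker(L_P)$, so indeed $L_P \in \Scal^d(1/C)$.

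To finish, I would note that $K_* = \vol_d(L_P)^{-1/d} L_P$ is a positive dilate of $L_P$ and that $\ker(\lambda L) = \lambda \ker(L)$ for $\lambda > 0$, so $r B^d \subseteq \ker(K_*)$ with $r := \vol_d(L_P)^{-1/d}/C > 0$; combined with $\vol_d(K_*) = 1$ this gives $K_* \in \Scal^d(r,1)$, and the reduction of the first paragraph completes the argument.

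The only step that is not a one-line observation is the claim that a Lipschitz gauge forces a \emph{nondegenerate kernel}: Lipschitzness trivially gives $(1/C) B^d \subseteq L_P$, but a star body can contain a ball about the origin while having an arbitrarily small --- even degenerate --- kernel, so the upgrade from "ball inside the body" to "ball inside the kernel" is where the work is, and the homogeneity cancellation in the displayed estimate is exactly what makes it go through. (This is a converse to the kernel-implies-Lipschitz fact used inside the proof of Proposition \ref{prop:lip-cont-radial}.)
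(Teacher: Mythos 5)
Your proof is correct, and it takes a genuinely more self-contained route than the paper. The paper reduces the statement to showing $L_P \in \Scal^d(r)$ for some $r>0$ and then simply cites Theorem 6.1 of Rubinov (2000) for the equivalence ``$rB^d \subseteq \ker(L_P)$ for some $r>0$ iff $\|\cdot\|_{L_P}$ is Lipschitz,'' giving no further argument. You instead prove the needed direction of that equivalence from first principles: the estimate
\[
\|tv + (1-t)y\|_{L_P} \leqslant \|(1-t)y\|_{L_P} + C\|tv\|_{\ell_2} = (1-t)\|y\|_{L_P} + Ct\|v\|_{\ell_2} \leqslant 1
\]
is exactly the content of the implication, and your diagnosis of where the work lies is accurate --- the nontrivial upgrade is from ``a ball sits inside $L_P$'' (which follows trivially from Lipschitzness applied to the pair $(v,0)$) to ``a ball sits inside $\ker(L_P)$,'' and it is the cancellation afforded by positive homogeneity that makes the estimate close. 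The paper's proof is shorter by outsourcing the step; yours is longer but fully self-contained and makes transparent that only the easy direction of Rubinov's equivalence is needed (the converse, kernel-implies-Lipschitz, already appears in the paper inside the proof of Proposition \ref{prop:lip-cont-radial}). You also correctly handle the small bookkeeping that the paper elides --- passing from $L_P \in \Scal^d(1/C)$ to $K_* = \vol_d(L_P)^{-1/d}L_P \in \Scal^d(r,1)$ via $\ker(\lambda L) = \lambda\ker(L)$ --- and the reduction in your first paragraph (a global unique minimizer of a restriction to a subset containing it remains the unique minimizer) is exactly what both proofs need.
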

\begin{proof}
It suffices to prove that there exists $r > 0$ such that $L_P \in \mathcal{S}^d(r)$. Indeed, Theorem 6.1 in \cite{Rubinov2000} implies that there exists $r > 0$ such that $rB^d \subseteq \mathrm{ker}(L_P)$ if and only if the gauge $\|\cdot\|_{L_P} = 1/\rho_P(\cdot)$ is Lipschitz.
\end{proof}

\section{Convergence of Minimizers}\label{sec:convergence-of-minimizers}

In this section, we study the behavior of the minimizers with increasing data.  Our main result is to show that the empirical minimizers converge to the minimizer of the population risk. To state our main result, we introduce the following notation. For a distribution $P$ over $\R^d$, let $P_m$ denote the empirical distribution of $P$ over $m$ i.i.d. observations drawn from $P$. Let $F(K ;P) := \mathbb{E}_P[\|x\|_K]$. Then $F(K; P_m)$ and $F(K;P)$ denote the empirical risk functional and population risk functional, respectively. As in the previous section, we will restrict to the class of well-conditioned star bodies $\Scal^d(r)$ and the bounded subset $\Scal^d(r,1)$ in order to obtain the required continuity and compactness properties associated to our optimization problem for the desired convergence to hold.

We first state the main result here: \begin{theorem} \label{thm:convergence-of-minimizers}
Let $P$ be a distribution over $\R^d$ such that  $\E_P \|x\|_{\ell_2} < \infty$ and fix $0 < r < \infty$. Then the sequence of minimizers $(K_m^*) \subseteq \Scal^d(r, 1)$ of $F(K; P_m)$ over $\Scal^d(r,1)$ has the property that any convergent subsequence converges in the radial and Hausdorff metric to a minimizer of the population risk almost surely: $$\text{any convergent}\ (K_{m_{\ell}}^*)\ \text{satisfies}\ K_{m_{\ell}}^* \rightarrow K_* \in \argmin_{K \in \Scal^d(r,1)} F(K; P).$$ Moreover, a convergent subsequence of $(K_m^*)$ exists.
\end{theorem}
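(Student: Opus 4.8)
The plan is to reduce the statement to an almost-sure \emph{uniform} convergence of the empirical risk $F(\cdot;P_m)$ to the population risk $F(\cdot;P)$ over the compact constraint set $\Scal^d(r,1)$, and then to invoke the standard stability of minimizers under uniform convergence on a compact domain (equivalently, the fundamental theorem of $\Gamma$-convergence: uniform convergence of continuous functions on a compact metric space implies $\Gamma$-convergence, and equi-coercivity is automatic since the domain is compact). Two preliminary remarks handle the ``moreover'' part. First, for each $m$ the empirical measure $P_m$ trivially satisfies $\E_{P_m}\|x\|_{\ell_2} < \infty$, so by Corollary~\ref{cor:lipschitz-pop-loss} the map $K \mapsto F(K;P_m)$ is Lipschitz, hence continuous, on $\Scal^d(r,1)$, which is compact by Corollary~\ref{cor:compactness-eps-net-SrR}; thus a minimizer $K_m^*$ exists and $(K_m^*)$ is a well-defined sequence. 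Second, by sequential compactness of $(\Scal^d(r,1),\delta)$, which follows from Theorem~\ref{thm:BST-radial}, the sequence $(K_m^*)$ admits a convergent subsequence.

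For the uniform convergence I would proceed as in a Glivenko--Cantelli argument, exploiting the equi-Lipschitz structure. Fix $K \in \Scal^d(r,1)$. Since $rB^d \subseteq \ker(K) \subseteq K$ we have $\|x\|_K \leqslant \|x\|_{\ell_2}/r$, so $\E_P\|x\|_K < \infty$ and the strong law of large numbers gives $F(K;P_m) \to F(K;P)$ almost surely. Moreover, Corollary~\ref{cor:lipschitz-pop-loss} applied with $P$ replaced by $P_m$ shows $F(\cdot;P_m)$ is Lipschitz in the radial metric with constant $L_m := \frac{1}{mr^2}\sum_{i=1}^m \|X_i\|_{\ell_2}$, and $L_m \to \E_P\|x\|_{\ell_2}/r^2$ almost surely by the strong law, so $\Lambda := \sup_m L_m < \infty$ on a probability-one event. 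Now take the finite $(1/n)$-nets $\Scal_{1/n} \subset \Scal^d(r,1)$ provided by Corollary~\ref{cor:compactness-eps-net-SrR} for each $n \in \N$; the union $\mathcal{N} := \bigcup_{n} \Scal_{1/n}$ is countable, so on the intersection of the corresponding countably many probability-one events we have $F(K;P_m) \to F(K;P)$ for all $K \in \mathcal{N}$ and $\Lambda < \infty$. On this event, given $K \in \Scal^d(r,1)$ pick $K' \in \Scal_{1/n}$ with $\delta(K,K') \leqslant 1/n$ and estimate $|F(K;P_m) - F(K;P)| \leqslant |F(K';P_m) - F(K';P)| + (\Lambda + \E_P\|x\|_{\ell_2}/r^2)/n$; taking the supremum over $K$, then $\limsup_m$, then $n \to \infty$ gives $\sup_{K \in \Scal^d(r,1)} |F(K;P_m) - F(K;P)| \to 0$ almost surely.

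To conclude, work on the probability-one event of uniform convergence and let $(K_{m_\ell}^*)$ be any subsequence converging, in the radial metric, to some $K_* \in \Scal^d(r,1)$. For every $L \in \Scal^d(r,1)$ we have $F(K_{m_\ell}^*;P_{m_\ell}) \leqslant F(L;P_{m_\ell}) \to F(L;P)$; on the other side, $|F(K_{m_\ell}^*;P_{m_\ell}) - F(K_{m_\ell}^*;P)| \leqslant \sup_K |F(K;P_{m_\ell}) - F(K;P)| \to 0$ while $F(K_{m_\ell}^*;P) \to F(K_*;P)$ by continuity of $F(\cdot;P)$ (Corollary~\ref{cor:lipschitz-pop-loss}), so $F(K_{m_\ell}^*;P_{m_\ell}) \to F(K_*;P)$. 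Hence $F(K_*;P) \leqslant F(L;P)$ for all admissible $L$, i.e., $K_* \in \argmin_{K \in \Scal^d(r,1)} F(K;P)$. Finally, since the radial and Hausdorff metrics have the same convergent sequences on $\Scal^d(r)$ by Theorem~\ref{thm:top-equiv}, the convergence $K_{m_\ell}^* \to K_*$ holds in both metrics, as claimed.

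The main obstacle is the almost-sure uniform convergence of the second paragraph: the delicate points are keeping the almost-sure quantifiers under control (reducing everything to a countable family of strong-law events indexed by the nested finite nets) and using the uniform-in-$m$ Lipschitz bound to promote pointwise convergence on the nets to uniform convergence over all of $\Scal^d(r,1)$. Once this is in hand, the existence of empirical minimizers, the compactness of the domain, and the passage to the limit in the optimality inequality are all routine given the results of Section~\ref{sec:general-existence-results}.
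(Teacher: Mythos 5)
Your proposal is correct and follows the same overall blueprint as the paper's argument -- almost-sure uniform convergence of the empirical risk over $\Scal^d(r,1)$, plus compactness of the constraint set (Theorem~\ref{thm:BST-radial}) and Lipschitz continuity of $F(\cdot;P)$ (Corollary~\ref{cor:lipschitz-pop-loss}), together give stability of minimizers -- but it establishes the uniform-convergence step by a genuinely different, more hands-on route. The paper invokes Pollard's ULLN (Theorem~\ref{thm:USLLN}) and builds explicit upper/lower bracketing functions $\overline{f}, \underline{f}$ from the finite $\varepsilon$-nets of Corollary~\ref{cor:compactness-eps-net-SrR}; you instead run a self-contained Glivenko--Cantelli argument exploiting the equi-Lipschitz structure of $\{\|\cdot\|_K : K \in \Scal^d(r,1)\}$: you observe that $F(\cdot;P_m)$ is $L_m$-Lipschitz with $L_m = \E_{P_m}\|x\|_{\ell_2}/r^2 \to \E_P\|x\|_{\ell_2}/r^2$ a.s., hence $\sup_m L_m < \infty$ a.s., reduce the pointwise strong law to the countable union $\bigcup_n \Scal_{1/n}$ of nets, and propagate along the nets using the uniform-in-$m$ Lipschitz bound. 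This is more elementary (no external bracketing theorem) and has the pedagogical virtue of making the measure-theoretic bookkeeping explicit (the countable intersection of strong-law events), which Pollard's theorem handles implicitly; the cost is that the argument is tied to uniform Lipschitzness, whereas the paper's bracketing route would generalize more readily to function classes that lose equi-Lipschitz control but retain finite bracketing entropy. Your closing step -- the limiting optimality inequality $F(K_*;P) \leqslant \lim F(L;P_{m_\ell}) = F(L;P)$ for all admissible $L$ -- is the substance of the paper's appeal to the Fundamental Theorem of $\Gamma$-convergence (Proposition~\ref{prop:fund-thm-gamma-conv}); given compactness of the domain and uniform convergence, the two are interchangeable. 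The passage from radial to Hausdorff convergence via Theorem~\ref{thm:top-equiv} is handled identically. In short: same skeleton, different (and equally valid) proof of the key uniform-convergence lemma, and a direct rather than $\Gamma$-convergence-flavored finish.
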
 \noindent Note that our result states that any limit point of $(K_m^*)$ is a minimizer of the population risk, which is vacuously true if $(K_m^*)$ does not have a convergent subsequence. However, since we have established $\Scal^d(r,1)$ is a closed and bounded subset of $\Scal^d(r)$ and $(K_m^*) \subseteq \Scal^d(r,1)$, an application of Theorem \ref{thm:BST-radial} proves that a convergent subsequence of $(K_m^*)$ must exist.

If we restrict ourselves to searching over convex bodies $\Ccal^d(r,1) := \{K \in \Ccal^d : rB^d \subseteq K, \vol_d(K) = 1\}$, we can obtain the following result focusing on convex bodies, whose proof is a direct consequence of Theorem \ref{thm:convergence-of-minimizers} and the fact that the radial topology and Hausdorff topology are equivalent for convex bodies:

\begin{theorem} \label{cor:convergence-of-minimizers-conv}
Let $P$ be a distribution over $\R^d$ such that  $\E_P \|x\|_{\ell_2} < \infty.$ Then the sequence of minimizers $(K_m^*) \subseteq \Ccal^d(r, 1)$ of $F(K; P_m)$ over $\Ccal^d(r, 1)$ has the property that any convergent subsequence converges in the radial and Hausdorff metric to a minimizer of the population risk almost surely: $$\text{any convergent}\ (K_{m_{\ell}}^*)\ \text{satisfies}\ K_{m_{\ell}}^* \rightarrow K_* \in \argmin_{K \in \Ccal^d(r, 1)} F(K; P).$$ Moreover, a convergent subsequence of $(K_m^*)$ exists.
\end{theorem}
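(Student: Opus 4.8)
The plan is to deduce Theorem~\ref{cor:convergence-of-minimizers-conv} as a corollary of Theorem~\ref{thm:convergence-of-minimizers} by observing that the convex case is simply the restriction of the star-body problem to the subclass $\Ccal^d(r,1) \subseteq \Scal^d(r,1)$, together with the fact (noted already in Section~\ref{sec:prelims-metric-spaces} and used in the statement) that on convex bodies the radial metric $\delta$ and the Hausdorff metric $d_H$ induce the same topology. So the only real content is to check that every ingredient of the proof of Theorem~\ref{thm:convergence-of-minimizers} survives intact when the ambient class $\Scal^d(r,1)$ is replaced by $\Ccal^d(r,1)$.

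First I would verify that $\Ccal^d(r,1)$ is a closed and bounded subset of $\Scal^d(r)$: boundedness is inherited from Lemma~\ref{lem:unif-bound} (since $\Ccal^d(r,1) \subseteq \Scal^d(r,1) \subseteq \{K : K \subseteq R_r B^d\}$), and closedness under the Hausdorff topology follows because it is the intersection of the Hausdorff-closed set $\Scal^d(r,1)$ (Lemma~\ref{lem:Scald-r-R-closed}) with the class of convex bodies, which is itself Hausdorff-closed; alternatively one appeals directly to Blaschke's Selection Theorem (Theorem~\ref{thm:Blaschke}) for the compactness of bounded closed families of convex bodies. Consequently Corollary~\ref{cor:compactness-eps-net-SrR} applies with $\Cfrak = \Ccal^d(r,1)$, giving compactness and finite $\varepsilon$-nets in both metrics, and Corollary~\ref{cor:convex-minimzers} (or Theorem~\ref{thm:star_existence}) guarantees that the empirical minimizers $K_m^* \in \argmin_{K \in \Ccal^d(r,1)} F(K;P_m)$ and the population minimizer exist.

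Next I would run the argument of Theorem~\ref{thm:convergence-of-minimizers} verbatim with $\Cfrak = \Ccal^d(r,1)$ in place of $\Scal^d(r,1)$: the Lipschitz bound of Corollary~\ref{cor:lipschitz-pop-loss} holds over any subset of $\Scal^d(r)$ and in particular over convex bodies, uniform convergence of $F(\cdot;P_m)$ to $F(\cdot;P)$ over the compact class follows from the finite $\varepsilon$-net plus a pointwise law of large numbers (applied at each net point and then propagated by Lipschitz continuity, almost surely), and the standard variational/$\Gamma$-convergence conclusion then forces any Hausdorff-limit point of $(K_m^*)$ to minimize $F(\cdot;P)$ over $\Ccal^d(r,1)$. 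Existence of a convergent subsequence of $(K_m^*)$ comes from compactness of $\Ccal^d(r,1)$ in the Hausdorff metric (Theorem~\ref{thm:Blaschke}), and since $\delta$ and $d_H$ are topologically equivalent on convex bodies, the convergence holds in both metrics simultaneously.

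I do not anticipate a genuine obstacle here, since this is a specialization rather than a new argument; the one point that needs a word of care is that the population minimizer over $\Ccal^d(r,1)$ is in general \emph{not} the same as the population minimizer over $\Scal^d(r,1)$ (restricting to convex bodies changes the optimum), so one must be careful to state and prove the result intrinsically for the class $\Ccal^d(r,1)$ rather than simply quoting Theorem~\ref{thm:convergence-of-minimizers} as a black box with its own $\argmin$ set. The cleanest writeup therefore mirrors the proof of Theorem~\ref{thm:convergence-of-minimizers} line by line with the substitution $\Scal^d(r,1) \rightsquigarrow \Ccal^d(r,1)$, invoking Theorem~\ref{thm:Blaschke} wherever Theorem~\ref{thm:BST-radial} was used and noting the equivalence of the two metrics on $\Ccal^d$ to transfer the conclusion to the radial metric.
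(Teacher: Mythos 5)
Your proposal is correct and follows essentially the same route as the paper, which dispatches this result with a one-line remark that the proof of Theorem~\ref{thm:convergence-of-minimizers} applies verbatim with $\Ccal^d(r,1)$ in place of $\Scal^d(r,1)$, together with the equivalence of the radial and Hausdorff topologies on convex bodies. You are right, and more careful than the paper's phrasing, to flag that the statement is a specialization of the \emph{proof} rather than a literal corollary of the \emph{statement} of Theorem~\ref{thm:convergence-of-minimizers}, since the $\argmin$ over $\Ccal^d(r,1)$ need not coincide with the $\argmin$ over $\Scal^d(r,1)$.
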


An outline of this section is as follows. First, in Section \ref{sec:gamma-convergence-overview}, we describe the core mathematical tool needed to prove convergence of minimizers, namely the notion of $\Gamma$-convergence from variational analysis. Then, in Section \ref{sec:ULLN-for-objective}, we establish a result that will be helpful in establishing the requirements of $\Gamma$-convergence. In particular, we prove uniform convergence of the empirical risk objective to the population objective over $\Scal^d(r,1)$. This result requires our continuity and compactness results that were used to prove existence of minimizers in Section \ref{sec:general-existence-results}. Finally, combining results from both sections, we prove Theorem \ref{thm:convergence-of-minimizers} in Section \ref{sec:final-convergence-of-minimizers} 
 by arguing that the minimizers of the empirical risk must converge to a minimizer of the population risk. Such tools will also be shown to establish robustness guarantees in Section \ref{sec:robustness} for our problem, in the sense that if we obtain a regularizer on data convolved with Gaussian noise, the regularizer converges to the optimal regularizer for clean data as the amount of noise decreases.

\subsection{$\Gamma$-convergence} \label{sec:gamma-convergence-overview}

Our key technique for establishing Theorem \ref{thm:convergence-of-minimizers} is a tool from variational analysis known as $\Gamma$-convergence \cite{Braides-Gamma-Handbook}.  In essence, $\Gamma$-convergence allows one to conclude the convergence of minimizers of a sequence of functionals to a minimizer of a particular limiting functional.  In addition, because our objects of interest are \emph{sets} and they reside in a more general metric space rather than Euclidean space, greater care is needed in understanding the limits of minimizers. 

We first state the definition of $\Gamma$-convergence here and cite some useful results that will be needed in our proofs. 

\begin{definition}
Let $(F_i)$ be a sequence of functions $F_i : X \rightarrow \R$ on some topological space $X$. Then we say that $F_i$ $\Gamma$-converges to a limit $F$ and write $F_i \xrightarrow[]{\Gamma} F$ if  the following conditions hold:
\begin{itemize}
    \item For any $x \in X$ and any sequence $(x_i)$ such that $x_i \rightarrow x$, we have \begin{align*}
        F(x) \leqslant \liminf_{i \rightarrow \infty} F_i(x_i).
    \end{align*}
    \item For any $x \in X$, we can find a sequence $x_i \rightarrow x$ such that \begin{align*}
        F(x) \geqslant \limsup_{i \rightarrow \infty} F_i(x_i).
    \end{align*}
\end{itemize} In fact, if the first condition holds, then the second condition could be taken to be the following: for any $x \in X$, there exists a sequence $x_i\rightarrow x$ such that $\lim_{i\rightarrow\infty} F_i(x_i) = F(x).$
\end{definition}

In addition to $\Gamma$-convergence, we also require the notion of equi-coercivity, which states that minimizers of a sequence of functions are attained over a compact domain.
\begin{definition}
A family of functions $F_i : X \rightarrow \R$ is equi-coercive if for all $\alpha$, there exists a compact set $K_{\alpha} \subseteq X$ such that $\{x \in X : F_i(x) \leqslant \alpha\} \subseteq K_{\alpha}.$
\end{definition}

Finally, this notion combined with $\Gamma$-convergence guarantees convergence of minimizers, which is known as the Fundamental Theorem of $\Gamma$-convergence \cite{Braides-Gamma-Handbook}.

\begin{proposition}[Fundamental Theorem of $\Gamma$-Convergence] \label{prop:fund-thm-gamma-conv}
If $F_i \xrightarrow[]{\Gamma} F$ and the family $(F_i)$ is equi-coercive, then the every limit point of the sequence of minimizers $(x_i)$ where $x_i \in \argmin_{x \in X} F_i(x)$ converges to some $x \in \argmin_{x \in X}F(x)$.
\end{proposition}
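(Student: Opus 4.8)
The plan is to run the argument underlying the standard Fundamental Theorem of $\Gamma$-convergence: use the $\liminf$ condition to bound a limit point from below, and use recovery sequences (the $\limsup$ condition) together with the minimality of each $x_i$ to bound the minimal values $\inf_X F_i$ from above; chaining the two forces any limit point of $(x_i)$ to be a global minimizer of $F$.

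First I would set $m_i := \inf_{x \in X} F_i(x) = F_i(x_i)$ and record that equi-coercivity is what makes the statement non-vacuous. Assuming $F \not\equiv +\infty$ (otherwise the claim is trivial), pick $y$ with $F(y) < \infty$ and let $(y_i)$ be a recovery sequence for $y$, so $F_i(y_i) \to F(y)$; since $m_i \leqslant F_i(y_i)$, we get $\limsup_i m_i \leqslant F(y) < \infty$. Hence, for $\alpha := F(y) + 1$, we have $x_i \in \{x : F_i(x) \leqslant \alpha\} \subseteq K_\alpha$ for all large $i$, with $K_\alpha$ compact, so the sequence $(x_i)$ has at least one limit point.

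Next, let $x \in X$ be a limit point of $(x_i)$ and extract a subsequence $x_{i_k} \to x$. The $\liminf$ condition of $\Gamma$-convergence applied to this convergent sequence gives $F(x) \leqslant \liminf_{k} F_{i_k}(x_{i_k}) = \liminf_k m_{i_k}$. On the other hand, fix an arbitrary $z \in X$; the recovery-sequence property (the second condition in the definition of $\Gamma$-convergence) furnishes $z_i \to z$ with $\lim_i F_i(z_i) = F(z)$, and restricting to the indices $(i_k)$ and using $m_{i_k} \leqslant F_{i_k}(z_{i_k})$ yields $\limsup_k m_{i_k} \leqslant \lim_k F_{i_k}(z_{i_k}) = F(z)$. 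Chaining, $F(x) \leqslant \liminf_k m_{i_k} \leqslant \limsup_k m_{i_k} \leqslant F(z)$ for every $z \in X$, whence $F(x) = \inf_X F$, i.e. $x \in \argmin_{x \in X} F(x)$ (and, as a byproduct, $m_{i_k} \to \min_X F$).

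I do not expect a genuine obstacle here — this is essentially a textbook statement — so the only care required is bookkeeping: invoking the recovery-sequence bound along the already-extracted subsequence $(i_k)$ rather than the full sequence, and dispatching the degenerate case $F \equiv +\infty$. In the applications of this paper the ambient space $X$ will be $(\Scal^d(r,1), \delta)$ or its convex analogue, which is compact by Corollary \ref{cor:compactness-eps-net-SrR}; there equi-coercivity is automatic and a limit point of $(x_i)$ always exists, which is precisely where the earlier Blaschke-type compactness results feed into the argument.
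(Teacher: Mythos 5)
Your proof is correct and is the standard textbook argument for the Fundamental Theorem of $\Gamma$-convergence. Note, however, that the paper does not prove this proposition itself --- it is stated as a known result and cited to \cite{Braides-Gamma-Handbook} --- so there is no internal proof to compare against; your argument (liminf inequality along the extracted subsequence to bound $F(x)$ from below, recovery sequences plus minimality of $x_i$ to bound $\limsup m_{i_k}$ from above by $F(z)$ for arbitrary $z$, and equi-coercivity to guarantee a limit point exists once $\limsup m_i < \infty$) is precisely the one found in the standard references, and your bookkeeping (restricting the recovery bound to the already-extracted indices, handling the degenerate case $F \equiv +\infty$, and the remark that liminf along a subsequence dominates liminf along the full sequence) is sound.
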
 
 \noindent The goal of the subsequent sections will be in deriving the necessary conditions to establish $\Gamma$-convergence of the empirical risk to the population risk. Then, we will show how our compactness results in Section \ref{sec:compact_constraint} in conjunction with the results in Section \ref{sec:ULLN-for-objective} can establish the requirements of Proposition \ref{prop:fund-thm-gamma-conv}.

\subsection{Uniform Convergence of Empirical Risk Objective} \label{sec:ULLN-for-objective}

In order to show that our empirical risk functional $\Gamma$-converges to the population risk, we require a 
result that shows the empirical risk uniformly converges to the population risk in the limit of increasing data. To obtain this, we appeal to the following Uniform Strong Law of Large Numbers (ULLN) \cite{Pollard84}, which shows that if there exists an $\varepsilon$-net over our hypothesis class, then we can establish uniform convergence of the empirical risk to the population risk.

\begin{theorem}[Theorem 3 in \cite{Pollard84}] \label{thm:USLLN}
Let $Q$ be a probability measure, and let $Q_m$ be the corresponding empirical measure. Let $\mathcal{G}$ be a collection of $Q$-integrable functions. Suppose that for every $\varepsilon > 0$
there exists a finite collection of functions $\mathcal{G}_{\varepsilon}$ such that for every $g \in \mathcal{G}$ there exists $\overline{g}, \underline{g} \in \mathcal{G}_{\varepsilon}$ satisfying (i) $\underline{g}  \leqslant g \leqslant  \overline{g}$, and (ii) $\mathbb{E}_Q[\overline{g} - \underline{g}]< \varepsilon$. Then $\sup_{g \in \mathcal{G}}|\mathbb{E}_{Q_m}[g] - \mathbb{E}_Q[g]| \rightarrow 0$ almost surely.
\end{theorem}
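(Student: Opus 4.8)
The plan is to prove the statement by the classical \emph{bracketing} argument, which reduces the supremum over the (possibly uncountable) class $\mathcal{G}$ to a maximum over a finite set and then invokes the ordinary strong law of large numbers; this is exactly the reasoning behind Theorem~3 of \cite{Pollard84}, which we recapitulate here for completeness.

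First I would fix $\varepsilon > 0$ and use the hypothesis to obtain the finite bracketing family $\mathcal{G}_\varepsilon$. For an arbitrary $g \in \mathcal{G}$ with brackets $\underline g \leqslant g \leqslant \overline g$ drawn from $\mathcal{G}_\varepsilon$, monotonicity of expectation gives
\[ \mathbb{E}_{Q_m}[g] - \mathbb{E}_Q[g] \;\leqslant\; \big(\mathbb{E}_{Q_m}[\overline g] - \mathbb{E}_Q[\overline g]\big) + \mathbb{E}_Q[\overline g - g] \;\leqslant\; \big(\mathbb{E}_{Q_m}[\overline g] - \mathbb{E}_Q[\overline g]\big) + \varepsilon, \]
and, symmetrically, $\mathbb{E}_{Q_m}[g] - \mathbb{E}_Q[g] \geqslant \big(\mathbb{E}_{Q_m}[\underline g] - \mathbb{E}_Q[\underline g]\big) - \varepsilon$. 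Taking the supremum over $g \in \mathcal{G}$ then yields the key \emph{deterministic} bound
\[ \sup_{g \in \mathcal{G}} \big| \mathbb{E}_{Q_m}[g] - \mathbb{E}_Q[g] \big| \;\leqslant\; \max_{h \in \mathcal{G}_\varepsilon} \big| \mathbb{E}_{Q_m}[h] - \mathbb{E}_Q[h] \big| + \varepsilon, \]
whose right-hand side involves only the finitely many fixed functions in $\mathcal{G}_\varepsilon$ and is therefore a bona fide measurable function of the sample, sidestepping any measurability issue with the left-hand supremum.

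Next I would check that every $h \in \mathcal{G}_\varepsilon$ occurring as a bracket is $Q$-integrable: writing $\overline g = g + (\overline g - g)$ with $0 \leqslant \overline g - g \leqslant \overline g - \underline g$, integrability of $g$ together with $\mathbb{E}_Q[\overline g - \underline g] < \varepsilon$ forces $\overline g$ (and, likewise, $\underline g$) to be integrable. The ordinary strong law of large numbers therefore applies to each of the finitely many such $h$, and since a finite union of null sets is null, $\max_{h \in \mathcal{G}_\varepsilon} | \mathbb{E}_{Q_m}[h] - \mathbb{E}_Q[h] | \to 0$ almost surely; combining this with the displayed bound gives $\limsup_{m \to \infty} \sup_{g \in \mathcal{G}} | \mathbb{E}_{Q_m}[g] - \mathbb{E}_Q[g] | \leqslant \varepsilon$ almost surely.

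Finally, to upgrade this from ``for each fixed $\varepsilon$'' to the stated conclusion, I would run the argument with $\varepsilon = 1/n$ for every $n \in \mathbb{N}$, obtaining probability-one events $\Omega_n$ on which the $\limsup$ is at most $1/n$; on the countable intersection $\bigcap_{n} \Omega_n$, which still has probability one, the $\limsup$ equals $0$, i.e.\ $\sup_{g \in \mathcal{G}} | \mathbb{E}_{Q_m}[g] - \mathbb{E}_Q[g] | \to 0$ almost surely. The only genuinely delicate points are this countable-intersection step --- needed precisely because the bracketing family $\mathcal{G}_\varepsilon$ varies with $\varepsilon$ --- and the measurability remark in the second paragraph; the remainder is routine bookkeeping.
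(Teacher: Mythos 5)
The paper does not actually give a proof of this statement: it is imported verbatim as ``Theorem~3 in \cite{Pollard84}'' and used as a black box in the proof of Theorem~\ref{thm:strong-LLN}. There is therefore no internal proof to compare against.

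Your bracketing argument is the standard proof of Pollard's result and it is correct as written. The one-sided bounds via $\underline g \leqslant g \leqslant \overline g$, the resulting pointwise domination
\[
\sup_{g \in \mathcal{G}}\bigl|\mathbb{E}_{Q_m}[g]-\mathbb{E}_Q[g]\bigr| \;\leqslant\; \max_{h \in \mathcal{G}_\varepsilon}\bigl|\mathbb{E}_{Q_m}[h]-\mathbb{E}_Q[h]\bigr| + \varepsilon,
\]
the integrability of the brackets deduced from $\mathbb{E}_Q[\overline g - \underline g] < \varepsilon$ and integrability of $g$, the application of the ordinary SLLN to the finitely many bracket functions, and the final diagonalization over $\varepsilon = 1/n$ are exactly the moving parts of Pollard's argument. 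Two small remarks you already anticipate but could make a hair more explicit: (i) the maximum should range over the subcollection of $\mathcal{G}_\varepsilon$ that actually arises as brackets of some $g \in \mathcal{G}$, since the hypothesis does not force members of $\mathcal{G}_\varepsilon$ unused as brackets to be $Q$-integrable, and it is only for these used brackets that your integrability argument applies; and (ii) the passage from the two one-sided inequalities to the absolute-value bound requires the short case split $a \geqslant 0$ versus $a < 0$, which you leave implicit. Neither affects correctness. As you note, the possible non-measurability of the left-hand supremum is handled at the correct level of care by observing that the deterministic bound holds realization-by-realization and the right-hand side is measurable; Pollard's own treatment handles this via outer probability, but for the purposes of this paper the conclusion is used only through measurable upper envelopes of exactly this form.
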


We now state and prove the uniform convergence result. 

\begin{theorem} \label{thm:strong-LLN}
Fix $0 < r < \infty$ and let $P$ be a distribution on $\R^d$ such that $\E_{P}\|x\|_{\ell_2} < \infty$. Then, we have strong consistency in the sense that \begin{align*}
    \sup_{K \in \Scal^d(r,1)} | F(K; P_m) - F(K; P) | \rightarrow 0\ \text{as}\ m \rightarrow \infty\ \text{almost surely}.
\end{align*}
\end{theorem}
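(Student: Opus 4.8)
The plan is to apply the Uniform Strong Law of Large Numbers (Theorem~\ref{thm:USLLN}) to the function class $\mathcal{G} := \{g_K : x \mapsto \|x\|_K \mid K \in \Scal^d(r,1)\}$, regarded as a family of $P$-integrable functions. Integrability is immediate: since $rB^d \subseteq K$ for every $K \in \Scal^d(r,1)$ we have $\|x\|_K \leqslant \|x\|_{\ell_2}/r$, hence $\E_P[g_K] \leqslant \E_P\|x\|_{\ell_2}/r < \infty$. The only nontrivial hypothesis of Theorem~\ref{thm:USLLN} to check is the bracketing condition: for every $\varepsilon > 0$ there must be a \emph{finite} collection $\mathcal{G}_\varepsilon$ of $P$-integrable functions so that every $g_K$ is sandwiched by a pair $\underline{g} \leqslant g_K \leqslant \overline{g}$ from $\mathcal{G}_\varepsilon$ with $\E_P[\overline{g} - \underline{g}] < \varepsilon$.

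To construct $\mathcal{G}_\varepsilon$, recall that by Lemma~\ref{lem:unif-bound} and Lemma~\ref{lem:Scald-r-R-closed} the set $\Scal^d(r,1)$ is a closed and bounded subset of $\Scal^d(r)$, so Corollary~\ref{cor:compactness-eps-net-SrR} provides, for any $\eta > 0$, a finite $\eta$-net $\Scal_\eta \subset \Scal^d(r,1)$ in the radial metric, i.e.\ every $K \in \Scal^d(r,1)$ has some $L \in \Scal_\eta$ with $\delta(K,L) \leqslant \eta$. For $x \neq 0$ write $\hat{x} := x/\|x\|_{\ell_2} \in \mathbb{S}^{d-1}$; by positive homogeneity of the gauge together with Proposition~\ref{prop:lip-cont-radial} (applied with both arguments equal to $\hat{x}$),
\begin{equation*}
|\|x\|_K - \|x\|_L| = \|x\|_{\ell_2}\,\bigl|\|\hat{x}\|_K - \|\hat{x}\|_L\bigr| \leqslant \frac{\eta}{r^2}\,\|x\|_{\ell_2} \qquad \text{whenever } \delta(K,L) \leqslant \eta.
\end{equation*}
Accordingly, for each $L \in \Scal_\eta$ define the envelopes $\overline{g}_L(x) := \|x\|_L + \tfrac{\eta}{r^2}\|x\|_{\ell_2}$ and $\underline{g}_L(x) := \|x\|_L - \tfrac{\eta}{r^2}\|x\|_{\ell_2}$, both $P$-integrable, and set $\mathcal{G}_\varepsilon := \{\overline{g}_L, \underline{g}_L : L \in \Scal_\eta\}$, which is finite. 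Given any $K \in \Scal^d(r,1)$, picking $L \in \Scal_\eta$ with $\delta(K,L) \leqslant \eta$ gives $\underline{g}_L \leqslant g_K \leqslant \overline{g}_L$ pointwise, while $\E_P[\overline{g}_L - \underline{g}_L] = \tfrac{2\eta}{r^2}\E_P\|x\|_{\ell_2}$. Choosing $\eta$ small enough that $\tfrac{2\eta}{r^2}\E_P\|x\|_{\ell_2} < \varepsilon$ makes both conditions (i) and (ii) of Theorem~\ref{thm:USLLN} hold.

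Invoking Theorem~\ref{thm:USLLN} with $Q = P$ and $Q_m = P_m$ then yields $\sup_{K \in \Scal^d(r,1)}|\E_{P_m}[\|x\|_K] - \E_P[\|x\|_K]| = \sup_{g \in \mathcal{G}}|\E_{Q_m}[g] - \E_Q[g]| \to 0$ almost surely, which is exactly $\sup_{K \in \Scal^d(r,1)}|F(K;P_m) - F(K;P)| \to 0$ a.s.

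The main obstacle — indeed essentially the only substantive step — is exhibiting the finite bracketing family; this rests on the compactness of $\Scal^d(r,1)$ in the radial metric (Corollary~\ref{cor:compactness-eps-net-SrR}, built on the Blaschke-type Theorem~\ref{thm:BST-radial}) and on lifting the Lipschitz estimate of Proposition~\ref{prop:lip-cont-radial} from $\mathbb{S}^{d-1}$ to all of $\R^d$ by homogeneity. Everything else is routine: the envelopes are integrable because $rB^d \subseteq K$ and $\E_P\|x\|_{\ell_2} < \infty$, and they need not themselves be gauges of star bodies, since Theorem~\ref{thm:USLLN} only requires $P$-integrability of the bracketing functions.
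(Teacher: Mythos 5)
Your proof is correct and follows essentially the same strategy as the paper: invoke Pollard's ULLN (Theorem~\ref{thm:USLLN}) with a finite bracketing family built from a radial-metric $\eta$-net of $\Scal^d(r,1)$ (Corollary~\ref{cor:compactness-eps-net-SrR}) together with the Lipschitz estimate of Proposition~\ref{prop:lip-cont-radial}. The one substantive difference is to your credit: your envelopes $\|\cdot\|_L \pm \frac{\eta}{r^2}\|\cdot\|_{\ell_2}$ are the correct pointwise brackets, whereas the paper's proof adds and subtracts the \emph{constant} $CM_P\eta = \frac{\eta}{r^2}\E_P\|x\|_{\ell_2}$ and asserts $\underline{f} \leqslant \|\cdot\|_K \leqslant \overline{f}$ --- but by homogeneity $|\|x\|_K - \|x\|_{K_0}|$ scales like $\frac{\eta}{r^2}\|x\|_{\ell_2}$, so a constant offset cannot dominate once $\|x\|_{\ell_2}$ exceeds $\E_P\|x\|_{\ell_2}$, and the paper's sandwich can fail pointwise when $P$ has unbounded support. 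Your variant keeps the bracketing requirement of Theorem~\ref{thm:USLLN} genuinely satisfied, the envelope functions remain $P$-integrable, and the $\E_P$-diameter is still $\frac{2\eta}{r^2}\E_P\|x\|_{\ell_2}$, so the conclusion follows cleanly.
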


\begin{proof}[Proof of Theorem \ref{thm:strong-LLN}]

By Corollary \ref{cor:lipschitz-pop-loss}, we have that the map $K \mapsto \E_P[\|x\|_K]$ is $C M_P$-Lipschitz over $\Scal^d(r,1)$ where $C = C(r) := 1/r^2$ and $M_P := \E_P\|x\|_{\ell_2} < \infty$. Now, consider the set of functions $\mathcal{G} := \{\|\cdot\|_K : K \in \Scal^d(r,1)\}$. We will show that we can construct a finite set of functions that approximate $\|\cdot\|_K$ for any $K \in \Scal^d(r,1)$ via an $\varepsilon$-covering argument. This will allow us to apply Theorem \ref{thm:USLLN}. By Corollary \ref{cor:compactness-eps-net-SrR}, there exists an $\varepsilon$-net $\Scal_{\varepsilon} \subseteq \Scal^d(r,1)$ of finite cardinality. For fixed $\varepsilon > 0$, construct a $\eta$-cover $\Scal_{\eta}$ of $\Scal^d(r,1)$ such that $\sup_{K \in \Scal^d(r,1)}\min_{L \in \Scal_{\eta}}\delta(K,L) \leqslant \eta$ where $\eta \leqslant \varepsilon/(2CM_P)$. Define the following sets of functions: \begin{align*}
    \mathcal{G}_{\eta,-} & := \{(\|\cdot\|_K - CM_P \eta)_+ : K \in \Scal_{\eta}\}\ \text{and}\ \mathcal{G}_{\eta,+} := \{\|\cdot\|_K + CM_P \eta: K \in \Scal_{\eta}\}.
\end{align*} Let $\|\cdot\|_K \in \mathcal{G}$ be arbitrary. Let $K_0 \in \Scal_{\eta}$ be such that $\delta(K,K_0) \leqslant \eta$. Define $\overline{f} = \|\cdot\|_{K_0} + CM_P \eta \in \mathcal{G}_{\eta,+}$ and $\underline{f} = (\|\cdot\|_{K_0}- CM_P \eta)_+ \in \mathcal{G}_{\eta,-}$. It follows that $\underline{f} \leqslant f \leqslant \overline{f}$. Moreover, by our choice of $\eta$, we have that $$\mathbb{E}_P[\overline{f} - \underline{f}] \leqslant 2CM_P \eta \leqslant \varepsilon.$$ Thus, the conditions of Theorem \ref{thm:USLLN} are met and we get $$\sup_{K \in \Scal^d(r,1)}|F(K; P_m) - F(K; P)| = \sup_{K \in \Scal^d(r,1)}|\mathbb{E}_{P_m}[\|x\|_K] - \mathbb{E}_{P}[\|x\|_K| \rightarrow 0\ \text{as}\ m\rightarrow \infty\ \text{a.s.} $$
\end{proof}

\subsection{Proof of Theorem \ref{thm:convergence-of-minimizers}}
\label{sec:final-convergence-of-minimizers}

We first establish the two requirements of $\Gamma$-convergence. For the first, fix $K \in \Scal^d(r,1)$ and consider a sequence $K_m \rightarrow K$. Then we have that \begin{align}
   F(K; P) & = F(K; P) - F(K; P_m) + F(K; P_m) - F(K_m; P_m) + F(K_m; P_m) \nonumber\\
   & \leqslant |F(K; P) - F(K; P_m)| + |F(K; P_m) - F(K_m; P_m)| + F(K_m; P_m) \nonumber \\
   & \leqslant \sup_{K \in \Scal^d(r,1)} |F(K; P) - F(K; P_m)| + |F(K; P_m) - F(K_m; P_m)| + F(K_m; P_m) \label{eq:liminf-bound}.
\end{align} By Theorem \ref{thm:strong-LLN}, we have that the first term goes to $0$ as $m$ goes to $\infty$ almost surely. We now show that $|F(K; P_m) - F(K_m; P_m)| \rightarrow 0$ as $m \rightarrow \infty$ almost surely. To see this, observe that by Corollary \ref{cor:lipschitz-pop-loss} we have \begin{align*}
    |F(K; P_m) - F(K_m; P_m)| & \leqslant  \frac{\E_{P_m}[\|x\|_{\ell_2}]}{r^2}\delta(K,K_m).
\end{align*} Since $\E_{P_m}[\|x\|_{\ell_2}] \rightarrow \E_P[\|x\|_{\ell_2}] < \infty$ and $\delta(K,K_m) \rightarrow 0$ as $m \rightarrow \infty$ almost surely, we attain $|F(K; P_m) - F(K_m; P_m)| \rightarrow 0$. Thus, taking the limit inferior of both sides in equation \eqref{eq:liminf-bound} yields \begin{align*}
    F(K; P) \leqslant \liminf_{m \rightarrow\infty} F(K_m; P_m).
\end{align*} For the second requirement, we exhibit a realizing sequence so let $K \in \Scal^d(r,1)$ be arbitrary. By Corollary \ref{cor:compactness-eps-net-SrR}, for any $m \geqslant 1$, there exists a finite $\frac{1}{m}$-net $\Scal_{1/m}$ of $\Scal^d(r,1)$ in the radial metric $\delta$. Construct a sequence $(K_m) \subset \Scal^d(r,1)$ such that for each $m$, $K_m \in \Scal_{1/m}$ and satisfies $\delta(K_m, K) \leqslant 1/m$. Hence this sequence satisfies $K_m \rightarrow K$ in the radial metric and $K_m \in \Scal^d(r,1)$ for all $m \geqslant 1$. Hence we can apply Theorem \ref{thm:strong-LLN} to get \begin{align*}
    |F(K_m; P_m) - F(K; P)| \leqslant |F(K_m; P_m) - F(K; P_m)| + |F(K; P_m) - F(K; P)| \rightarrow 0\ \text{as}\ m \rightarrow \infty\ \text{a.s.}
\end{align*} so $\lim_{m\rightarrow\infty} F(K_m; P_m) = F(K; P)$.

Now, we show that $(F(\cdot ; P_m))$ is equi-coercive on $\Scal^d(r,1)$. In fact, this follows directly from Theorem \ref{thm:BST-radial}, the variant of Blaschke's Selection Theorem we proved for the radial metric. Thus equi-coerciveness of the family $(F(\cdot ; P_m))$ trivially holds over $\Scal^d(r,1)$. As a result, applying Proposition \ref{prop:fund-thm-gamma-conv} to the family $F(\cdot ; P_m) : \Scal^d(r,1) \rightarrow \R$, if we define the sequence of minimizers \begin{align*}
    K_m^* \in \argmin_{K \in \Scal^d(r,1)} F(K; P_m)
\end{align*} we have that any limit point of $K_m^*$ converges to some \begin{align*}
    K_* \in \argmin_{K \in \Scal^d(r,1)} F(K; P)
\end{align*} almost surely, as desired. The existence of a convergent subsequence of $(K_m^*)$ follows from $\Scal^d(r,1)$ being a closed and bounded subset of $\Scal^d(r)$ and Theorem \ref{thm:BST-radial}.

{ 
\subsection{Robustness Guarantees}

\label{sec:robustness}

Our results utilizing $\Gamma$-convergence can also be used to establish robustness guarantees on optimal regularizers for uncorrupted and noisy data. In particular, we are interested in deviations between regularizers $K$ that are optimal over uncorrupted data $y = x$ for $x \sim P$ versus regularizers $K_{\sigma}$ that are optimal over noisy data $y = x + \sigma z$ where $x \sim P$ and $z \sim Q$ for some noise distribution $Q$, such as the Standard Normal distribution $\Ncal(0,I_d)$. A natural question to ask is whether $K_{\sigma}$ converges to $K$ as the amount of noise $\sigma \rightarrow 0$.
To understand this question, we first consider characterizing the deviation in the population risk under an uncorrupted distribution and a noisy distribution. 
}

{\color{black}To state our results, we first require the following metric on the space of probability distributions. Let $T(P,Q)$ denote all joint distributions $\gamma$ for $(X, Y)$ that have marginals $P$ and $Q$. Define the $p$-Wasserstein distance between two probability distributions $P$ and $Q$ as \begin{align*}
    W_p(P,Q) := \left(\inf_{\gamma \in T(P,Q)} \E_{\gamma}\|X - Y\|_{\ell_2}^p\right)^{1/p},\ p \geqslant 1.
\end{align*} A particularly useful result for our purposes is in the case $p = 1$, as the $1$-Wasserstein distance has the following dual characterization \cite{Villani-OT}:  \begin{align*}
    W_1(P, Q) := \sup_{f \in \mathrm{Lip}(1)} \left|\E_P[f(y)] - \E_Q [f(x)]\right|.
\end{align*} Here, $\mathrm{Lip}(R)$ denotes the set of all $R$-Lipschitz functions from $\R^d$ to $\R$: $$\mathrm{Lip}(R) := \left\{f : \R^d \rightarrow \R : \sup_{x \neq y}\frac{|f(x) - f(y)|}{\|x-y\|_{\ell_2}} \leqslant R\right\}.$$ }

{  Given two probability distributions $P$ and $Q$, we define their convolution $P * Q$ as the distribution that satisfies $$y \sim P * Q \Longleftrightarrow y = x + z,\ x \sim P,\ z \sim Q.$$ For a parameter $\sigma \geqslant 0$ and distribution $Q$, let $Q_{\sigma}$ denote the distribution such that $y \sim Q_{\sigma}$ if and only if $y = \sigma z$, $z \sim Q$. We first characterize the deviation in population risk between $P$ and $P * Q_{\sigma}$ as a function of $\sigma$ and then aim to understand properties of star bodies learned on noisy data as the noise vanishes.} {\color{black} We first establish the following result on the population risk, which shows that we can uniformly bound the deviation in population risk as a function of $\sigma$ if the collection of star bodies have sufficiently large inner width:} { \begin{theorem} \label{thm:noisy_pop_risk}
Let $P$ and $Q$ be two independent probability distributions on $\R^d$ and fix $\sigma \geqslant 0$. Suppose there exists an $0 < r < \infty$ such that $\mathfrak{C}  \subseteq \Scal^d(r)$. Then we have that \begin{align*}
    \sup_{K \in \mathfrak{C}} |F(K; P) - F(K; P * Q_{\sigma})| \leqslant \sigma \cdot \frac{\E_{Q}[\|z\|_{\ell_2}]}{r}.
\end{align*}
\end{theorem}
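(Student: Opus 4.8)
The plan is to reduce the whole statement to one ingredient that is already in hand: by (the first display of) Proposition~\ref{prop:lip-cont-radial}, for every $K \in \Scal^d(r)$ the gauge $\|\cdot\|_K$ is $\tfrac1r$-Lipschitz on $\R^d$, and — this is the crucial point — the Lipschitz constant $\tfrac1r$ depends only on the inner-width parameter $r$, not on $K$. Once this is noted, the bound follows from a one-line coupling argument, uniformly over $\Cfrak \subseteq \Scal^d(r)$.

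Concretely, I would first set up the coupling. Since $P$ and $Q$ are independent, a sample $y \sim P * Q_\sigma$ is realized as $y = x + \sigma z$ with $x \sim P$ and $z \sim Q$ independent, so for any fixed $K \in \Cfrak$,
\[
F(K; P * Q_\sigma) - F(K; P) = \E_{x \sim P,\, z \sim Q}\big[\,\|x + \sigma z\|_K - \|x\|_K\,\big].
\]
Applying Jensen's inequality to move the absolute value inside the expectation and then the pointwise Lipschitz estimate $|\,\|x+\sigma z\|_K - \|x\|_K\,| \leqslant \tfrac1r \|\sigma z\|_{\ell_2} = \tfrac{\sigma}{r}\|z\|_{\ell_2}$ yields
\[
|F(K; P) - F(K; P * Q_\sigma)| \leqslant \frac{\sigma}{r}\,\E_Q[\|z\|_{\ell_2}].
\]
Because the constant $\tfrac1r$ is the same for every $K \in \Scal^d(r) \supseteq \Cfrak$, taking the supremum over $K \in \Cfrak$ preserves the bound, which is exactly the claim. (Equivalently, one can route this through the Kantorovich duality recalled in the preceding paragraph of the paper: $r\|\cdot\|_K \in \mathrm{Lip}(1)$ gives $|F(K;P) - F(K;P*Q_\sigma)| \leqslant \tfrac1r W_1(P, P*Q_\sigma)$, and the coupling $x \mapsto x + \sigma z$ shows $W_1(P, P*Q_\sigma) \leqslant \sigma\,\E_Q[\|z\|_{\ell_2}]$.)

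There is no real obstacle here; the substance of the argument is already discharged by Proposition~\ref{prop:lip-cont-radial}, and the only things to be careful about are bookkeeping: using the Lipschitz bound on all of $\R^d$ rather than just $\mathbb{S}^{d-1}$ (which is how Proposition~\ref{prop:lip-cont-radial} is proved, via the decomposition $L = \bigcup_u L_u$ with $rB^d \subseteq L_u$ and each $\|\cdot\|_{L_u}$ being $\tfrac1r$-Lipschitz), observing that $\E_Q[\|z\|_{\ell_2}] < \infty$ is implicitly assumed so that the right-hand side — and hence, together with finiteness of $F(K;P)$, also $F(K; P*Q_\sigma)$ — is well-defined, and noting that the uniformity over $\Cfrak$ is automatic since no bound beyond $\Cfrak \subseteq \Scal^d(r)$ is needed.
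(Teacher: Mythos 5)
Your proof is correct and essentially the same as the paper's: both rest on the uniform $\tfrac1r$-Lipschitz bound for $\|\cdot\|_K$ over $\Scal^d(r)$ from Proposition~\ref{prop:lip-cont-radial} together with the additive coupling $y = x + \sigma z$. The paper factors the argument through $W_1$ via a separate Proposition (bounding $|F(K;P)-F(K;Q)| \leqslant \tfrac1r W_1(P,Q)$ first, then $W_1(P, P*Q_\sigma) \leqslant \sigma\,\E_Q\|z\|_{\ell_2}$), whereas you inline the coupling directly, but as you yourself observe in the parenthetical, the two routes are the same computation.
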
}

{\color{black}To prove this, we first consider the general question of how far $F(\cdot ; P)$ deviates from $F(\cdot ; Q)$ for two distributions $P$ and $Q$. We prove the following:
\begin{proposition} \label{prop:gen_result_on_pop_risk}
Let $P$ and $Q$ be two probability distributions on $\R^d$.  Suppose there exists an $0 < r < \infty$ such that $\mathfrak{C}  \subseteq \Scal^d(r)$. Then the following bound holds: \begin{align*}
   \sup_{K \in \Cfrak} |F(K; P) - F(K; Q)| & \leqslant \frac{1}{r} W_1(P,Q).
\end{align*}
\end{proposition}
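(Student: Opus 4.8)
The plan is to reduce the statement to the Kantorovich--Rubinstein dual characterization of $W_1$, once we know that the gauge of any well-conditioned star body is a globally $1/r$-Lipschitz function on $\R^d$.

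\emph{Step 1: Lipschitz continuity of the gauge.} First I would record that for every $K \in \Scal^d(r)$ the map $x \mapsto \|x\|_K$ belongs to $\mathrm{Lip}(1/r)$. This is exactly what is established inside the proof of Proposition~\ref{prop:lip-cont-radial}: since $rB^d \subseteq \ker(K)$, Proposition~6.2 in \cite{Rubinov2000} writes $K = \bigcup_{u \in K} K_u$ with each $K_u$ closed, convex and containing $rB^d$; each $\|\cdot\|_{K_u}$ is then sublinear and $1/r$-Lipschitz, and $\|\cdot\|_K = \inf_{u \in K} \|\cdot\|_{K_u}$ is an infimum of $1/r$-Lipschitz functions, hence itself $1/r$-Lipschitz on all of $\R^d$. (Equivalently, one may invoke Theorem~6.1 in \cite{Rubinov2000}.) Crucially, the constant $1/r$ does not depend on the particular $K \in \Cfrak \subseteq \Scal^d(r)$.

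\emph{Step 2: From Lipschitzness to $W_1$.} If $W_1(P,Q) = \infty$ there is nothing to prove, so assume $W_1(P,Q) < \infty$. Fix $K \in \Cfrak$ and set $g_K := r\,\|\cdot\|_K$. By Step~1 we have $g_K \in \mathrm{Lip}(1)$, so the dual characterization of the $1$-Wasserstein distance gives $|\E_P[g_K(x)] - \E_Q[g_K(x)]| \leqslant W_1(P,Q)$. Dividing by $r$,
\[
|F(K;P) - F(K;Q)| = \bigl|\E_P[\|x\|_K] - \E_Q[\|x\|_K]\bigr| = \frac{1}{r}\,\bigl|\E_P[g_K(x)] - \E_Q[g_K(x)]\bigr| \leqslant \frac{1}{r}\,W_1(P,Q).
\]
Since the right-hand side is independent of $K$, taking the supremum over $K \in \Cfrak$ yields the claimed bound.

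\emph{Remark on the main obstacle.} The substance of the argument is concentrated entirely in Step~1, namely the \emph{uniform} (over $\Cfrak$) $1/r$-Lipschitz bound for the gauges of star bodies with $rB^d \subseteq \ker(K)$; once that is in hand, the proposition is an immediate consequence of Kantorovich--Rubinstein duality. This is also why the well-conditioning hypothesis $\Cfrak \subseteq \Scal^d(r)$ (rather than mere boundedness) is essential: without a lower bound on the inradius of the kernel, the gauge need not be Lipschitz with any uniform constant, and the modulus of continuity of $F(\cdot;P)$ would not be controllable through $W_1$ alone.
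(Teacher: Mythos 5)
Your proof is correct and follows the same route as the paper's: establish the uniform $1/r$-Lipschitz property of the gauge from Proposition~\ref{prop:lip-cont-radial} and then apply Kantorovich--Rubinstein duality. The only cosmetic difference is that you rescale $\|\cdot\|_K$ to a $1$-Lipschitz function before invoking duality, whereas the paper bounds by the supremum over $\mathrm{Lip}(1/r)$ and pulls out the factor $1/r$ directly.
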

\begin{proof}
Fix $K \in \mathfrak{C}$. It was proven in Proposition \ref{prop:lip-cont-radial} that if $r B^d \subseteq \ker(K)$, then the mapping $x \mapsto \|x\|_K$ is $1/r$-Lipschitz so that $\|\cdot\|_K \in \mathrm{Lip}(1/r)$. Thus, using the dual characterization of $W_1(P,Q)$, we obtain the bound \begin{align*}
    \left|F(K ; P) - F(K; Q)\right| & = \left|\E_P[ \|y\|_K] - \E_Q[\|x\|_K]\right|  \\
    & \leqslant  \sup_{f \in \mathrm{Lip}(1/r)} \left|\E_P[ f(y)] - \E_Q[ f(x)]\right| \\
    & \leqslant \frac{1}{r} W_1(P, Q).
\end{align*} Taking the supremum over all $K \in \mathfrak{C}$ completes the proof.
\end{proof}

Now we can prove the main Theorem.}
{ 
\begin{proof}[Proof of Theorem \ref{thm:noisy_pop_risk}]
By our previous proposition, we simply need to bound $W_1(P, P * Q_{\sigma})$. Note that since $P$ and $Q$ are independent, then the joint distribution $\gamma$ given by $(x, x + \sigma z)$ where $x \sim P$ and $z \sim Q$ is a valid coupling between $P$ and $P*Q_{\sigma}$. By sub-optimality, we can upper bound the Wasserstein distance as \begin{align*}
    W_1(P,P * Q_{\sigma}) \leqslant \E_{\gamma}[\|x - (x + \sigma z)\|_{\ell_2}] = \sigma\E_{Q}[\|z\|_{\ell_2}].
\end{align*} Combining the uniform bound from Proposition \ref{prop:gen_result_on_pop_risk} and the above bound completes the proof.
\end{proof}

Based on our quantitative convergence bound, we can prove the following:

\begin{theorem}
Fix $0 < r < \infty$ and let $P$ and $Q$ be two independent distributions on $\R^d$ such that $\max\left\{\E_P[\|x\|_{\ell_2}], \E_Q[\|z\|_{\ell_2}]\right\} < \infty$. Let $(\sigma_t)$ be a non-negative, decreasing sequence converging to $0$. Then we have that the sequence of minimizers on noisy data \begin{align*}
    K_t \in \argmin_{K \in \mathcal{S}^d(r,1)} F(K; P * Q_{\sigma_t})
\end{align*} has the property that any convergent subsequence converges to a minimizer on uncorrupted data in the radial metric and Hausdorff metric: \begin{align*}
   \text{any convergent}\ (K_{t_{\ell}})\ \text{satisfies}\ K_{t_{\ell}} \rightarrow K_* \in \argmin_{K \in \Scal^d(r,1)} F(K ; P).
\end{align*} Moreover, a convergent subsequence of $(K_t)$ exists.
\end{theorem}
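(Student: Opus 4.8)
The plan is to mirror the proof of Theorem \ref{thm:convergence-of-minimizers}: show that the family of noisy population functionals $F(\cdot\,; P * Q_{\sigma_t})$ $\Gamma$-converges to $F(\cdot\,; P)$ on $\Scal^d(r,1)$ and is equi-coercive, then invoke the Fundamental Theorem of $\Gamma$-convergence (Proposition \ref{prop:fund-thm-gamma-conv}). The crucial quantitative ingredient is Theorem \ref{thm:noisy_pop_risk}, which gives the uniform bound $\sup_{K \in \Scal^d(r,1)} |F(K; P) - F(K; P * Q_{\sigma_t})| \leqslant \sigma_t \, \E_Q[\|z\|_{\ell_2}]/r \to 0$; in other words, the noisy functionals converge uniformly to the clean one.

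First I would record that each minimizer $K_t$ is well-defined: since $\|x + \sigma_t z\|_{\ell_2} \leqslant \|x\|_{\ell_2} + \sigma_t \|z\|_{\ell_2}$, the hypotheses give $\E_{P * Q_{\sigma_t}}[\|x\|_{\ell_2}] < \infty$, so Theorem \ref{thm:star_existence} guarantees existence of a minimizer of $F(\cdot\,; P * Q_{\sigma_t})$ over $\Scal^d(r,1)$. Next I would verify the two conditions of $\Gamma$-convergence with $X = \Scal^d(r,1)$ (with the radial metric). For the liminf inequality, given $K_t \to K$ in $\Scal^d(r,1)$, the uniform bound together with the Lipschitz continuity of $F(\cdot\,; P)$ on $\Scal^d(r,1)$ (Corollary \ref{cor:lipschitz-pop-loss}) yields $F(K_t; P * Q_{\sigma_t}) \geqslant F(K_t; P) - \sigma_t \E_Q[\|z\|_{\ell_2}]/r \to F(K; P)$, hence $F(K;P) \leqslant \liminf_{t} F(K_t; P * Q_{\sigma_t})$. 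For the recovery sequence, take the constant sequence $K_t \equiv K$: the uniform bound gives $F(K; P * Q_{\sigma_t}) \to F(K; P)$, which is exactly $\lim_t F(K_t; P * Q_{\sigma_t}) = F(K;P)$. Equi-coercivity is immediate because all sublevel sets are contained in $\Scal^d(r,1)$, which is compact in the radial metric by Theorem \ref{thm:BST-radial} (being a closed and bounded subset of $\Scal^d(r)$ by Lemmas \ref{lem:unif-bound} and \ref{lem:Scald-r-R-closed}, cf.\ Corollary \ref{cor:compactness-eps-net-SrR}).

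Applying Proposition \ref{prop:fund-thm-gamma-conv} then shows that every limit point of $(K_t)$ lies in $\argmin_{K \in \Scal^d(r,1)} F(K; P)$, which is the claimed convergence of convergent subsequences. The existence of a convergent subsequence follows once more from compactness of $\Scal^d(r,1)$ in the radial metric (Theorem \ref{thm:BST-radial}). Finally, since the radial and Hausdorff topologies coincide on $\Scal^d(r)$ (Theorem \ref{thm:top-equiv}), the convergence holds simultaneously in both metrics.

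I do not expect a genuine obstacle here: the proof is an assembly of the $\Gamma$-convergence machinery already developed, with Theorem \ref{thm:noisy_pop_risk} playing the role that Theorem \ref{thm:strong-LLN} played in Theorem \ref{thm:convergence-of-minimizers}. The only point requiring a moment's care is confirming that the finite-first-moment hypotheses on $P$ and $Q$ transfer to $P * Q_{\sigma_t}$ (so that the minimizers exist and $F(\cdot\,; P * Q_{\sigma_t})$ is finite and Lipschitz on $\Scal^d(r,1)$), which the triangle inequality handles uniformly in $t$ since $(\sigma_t)$ is bounded.
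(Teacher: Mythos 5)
Your proof is correct and follows essentially the same $\Gamma$-convergence strategy as the paper, with Theorem~\ref{thm:noisy_pop_risk} supplying the uniform convergence of $F(\cdot\,;P*Q_{\sigma_t})$ to $F(\cdot\,;P)$ and compactness of $\Scal^d(r,1)$ giving equi-coercivity. The only (harmless) cosmetic differences are that you decompose the liminf inequality so as to use the Lipschitz constant of $F(\cdot\,;P)$ rather than of $F(\cdot\,;P*Q_{\sigma_t})$, and you observe that the constant sequence serves as the recovery sequence, whereas the paper simply defers both steps to the argument in Theorem~\ref{thm:convergence-of-minimizers}.
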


\begin{proof} The proof of this result follows similarly to the proof of Theorem \ref{thm:convergence-of-minimizers} by using Theorem \ref{thm:noisy_pop_risk} to help establish $\Gamma$-convergence of the family $(F(\cdot ; P * Q_{\sigma_t}))_{t}$ to $F(\cdot; P)$ over $\Scal^d(r,1)$. For the two requirements of $\Gamma$-convergence, fix $K \in \Scal^d(r,1)$ and consider a sequence $K_t \rightarrow K$ as $t \rightarrow \infty$. Then by a similar argument to equation \eqref{eq:liminf-bound}, we have that \begin{align*}
    F(K; P) \leqslant \sup_{K \in \Scal^d(r,1)}|F(K;P) - F(K; P * Q_{\sigma_t})| + |F(K; P*Q_{\sigma_t}) - F(K_t; P*Q_{\sigma_t})| + F(K_t ; P*Q_{\sigma_t}).
\end{align*} By Theorem \ref{thm:noisy_pop_risk}, we have that the first term in the above display goes to $0$ as $t\rightarrow \infty$. Since $\E_{P*Q_{\sigma_t}}[\|x\|_{\ell_2} ]\rightarrow \E_P[\|x\|_{\ell_2}] < \infty$ and $\delta(K, K_t) \rightarrow 0$ as $t \rightarrow \infty$, then an application of Corollary \ref{cor:lipschitz-pop-loss} shows that the second term also goes to $0$ as $t \rightarrow \infty$. Taking the limit inferior of both sides gives us the desired result. The second requirement of $\Gamma$-convergence and equi-coercivity follows exactly the same argument as in the proof of Theorem \ref{thm:convergence-of-minimizers}.
\end{proof}}
{ 
\paragraph{Remark on noise assumptions.} We note that our assumptions on the noise only require essentially a finite second moment, which allows the result to handle a wide-range of distributions. For example, any Sub-gaussian distribution will satisfy the above requirements, along with distributions with heavier tails such as Sub-exponential distributions. While we focused on additive, scaled noise from the same distribution here for simplicity, it is also possible to extend such results to more complicated sequences of noise distributions, as long as tight control of the Wasserstein distance between the clean distribution and contaminated distribution can be shown.
}

\section{Statistical Learning Guarantees}

\label{sec:stat-learning}

As a final application of our results, we showcase sample complexity guarantees to control the deviation from the empirical risk to the population risk. In many data science contexts, it is imperative to understand the tradeoff between the expressiveness of the hypothesis class and the number of samples needed to efficiently solve the problem. Specifically, one might wish to ask, how many samples does it take to construct a regularizer (from a structured family) that approximates the best possible choice from a hypothesis class to some target accuracy -- such questions are often phrased as \emph{sample complexity} problems. In our framework, this requires an analysis of the particular collection of star bodies $\Cfrak \subseteq \Scal^d$ being optimized over and the behavior of the objective $K \mapsto \|\cdot\|_K$ over a finite number of samples drawn from a data distribution $P$. We showcase uniform convergence guarantees over the collection $\Cfrak$, where the difference between the empirical risk and the population risk is controlled with overwhelming probability by a quantity $\eta(m, \Cfrak, P)$ that depends on the number of samples $m$, the distribution $P$, and the complexity of the collection of bodies $\Cfrak$: \begin{align}
    \sup_{K \in \Cfrak} |F(K; P_m) - F(K; P)| \leqslant \eta(m, \Cfrak, P) \label{eq:unif-conv-bound-intro}
\end{align} As the number of samples increases, this approximation improves where the rate depends on the data distribution and structure of the bodies. As a consequence, we can establish that empirical minimizers $K_m^* \in \argmin_{K \in \Cfrak} F(K;P_m)$ obtain population risk not far from the population minimizer $K_* \in \argmin_{K \in \Cfrak} F(K; P)$ in the sense that with high probability \begin{align*}
    F(K_m^*; P) \leqslant F(K_*; P) + 2 \eta(m,\Cfrak, P).
\end{align*}

At a high-level, our generalization results require the following key ingredients to specify a uniform convergence bound:
\begin{itemize}
\item First, identify a family of star/convex bodies $\Cfrak$ of interest.
\item Show that the functional $K \mapsto \|\cdot\|_K$ is well-behaved in the sense that it is $C_{\Cfrak}$-Lipschitz over $\Cfrak$ with respect to a metric $D(\cdot,\cdot)$. This will allow us to exploit some of our previous continuity results for our objective functional over the space of well-conditioned star bodies.
\item Characterize the ``complexity'' of $\Cfrak$. While there are many notions of complexity that have been studied in statistical learning theory, one notion that will be particularly useful for our purposes is the notion of a covering number of a subset of a metric space. For a metric space $(X,D)$, we define the $\varepsilon$-covering number of a subset $T \subset X$ with respect to the metric $D$ as the smallest number of balls of radius $\varepsilon$ that covers $T$: \begin{align*}
    N(T,D,\varepsilon) := \min\left|\left\{\Tcal \subset T : T \subset \bigcup_{q \in \Tcal} \{x \in T : D(x,q) \leqslant \varepsilon\}\right\}\right|.
\end{align*}
\end{itemize}

In Section \ref{sec:general-uniform-bound-stat-learn}, we describe a general uniform convergence bound of the form \eqref{eq:unif-conv-bound-intro} that quantifies the rate of convergence of the empirical risk to the population risk over a general class of bodies $\Cfrak$. We will then showcase the utility of this bound by showing how it can be applied to several classes of bodies of interest in applications. First, we establish a bound on the class of star bodies given as unions of convex bodies with large inner width in Section \ref{sec:stat-learn-star-bodies-union-convex}. To our knowledge, such metric entropy estimates for this class of star bodies is new. Then, we apply this result to parametrized families of sets. The first class we consider is ellipsoids in Section \ref{sec:stat-learn-ellipsoids}. Afterwards, in Section \ref{sec:stat-learn-polytopes-DL}, we discuss polytopes and how such a bound can be applied to dictionary learning. Finally, we showcase an application to semidefinite regularizers in Section \ref{sec:stat-learn-semidef}, which are regularizers induced by linear images of Schatten $p$-norm balls.

\subsection{General Uniform Convergence Result} \label{sec:general-uniform-bound-stat-learn}

Given the ingredients described previously, we can prove a general uniform convergence result that shows when the functional $K \mapsto \|\cdot\|_K$ is Lipschitz over a collection of star bodies $\Cfrak$, we can obtain generalization bounds based on the complexity of the collection $\Cfrak$ and the data distribution $P$. We will show that this general theorem can then be applied to many classes of bodies of interest to showcase the utility of our result. 

\begin{theorem} \label{thm:unif-generalization-bound}
Let $P$ be a distribution over $\R^d$ such that for $x \sim P$, $\|x\|_{\ell_2} \leqslant 1$ almost surely. Consider a subset $\Cfrak \subset \Scal^d$ such that $r_{\Cfrak} B^d \subseteq K$ for all $K \in \Cfrak$ for some $0 < r_{\Cfrak} < \infty$. Suppose $\Cfrak$ has a finite $\varepsilon$-covering number in a metric $D$. That is, there exists a range $(0, \tilde{\varepsilon}]$ for some $\tilde{\varepsilon} > 0$ such that $\log N(\Cfrak, D, \varepsilon) < \infty$ for all $0 < \varepsilon \leqslant \tilde{\varepsilon}$. Suppose the map $K \mapsto \|\cdot\|_K$ is $C_{\Cfrak}$-Lipschitz over $\Cfrak$ in the maximum norm with respect to $D$, i.e., for all $K,L \in \Cfrak$ 
$$\|\|\cdot\|_K - \|\cdot\|_L\|_{\infty} := \max_{\|x\|_{\ell_2}=1}|\|x\|_K - \|x\|_L| \leqslant C_{\Cfrak} D(K,L).$$ 
Then for any $\gamma \in (0,1)$, we have that with probability $1 - \gamma$ over $m$ samples drawn from $P$, the following bound holds for all $K \in \Cfrak$, \begin{align*}
\begin{split}
    F(K; P) & \leqslant F(K; P_m) + 2\inf_{\varepsilon \in (0, \tilde{\varepsilon}]}\left\{\varepsilon + c_{P,m}\sqrt{\frac{2}{m} \log N\left(\Cfrak, D, \frac{\varepsilon}{C_{\Cfrak}}\right)}\right\} + c_{\Cfrak}\sqrt{\frac{2\ln 4/\gamma}{m}}.
\end{split}
\end{align*} Here $c_{P,m}$ is a constant that depends on $\Cfrak$, $P$, and $m$ and $c_{\Cfrak} := 4/r_{\Cfrak}$.
\end{theorem}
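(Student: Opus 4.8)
The plan is to control the single uniform quantity $\Phi_m := \sup_{K \in \Cfrak}\big(F(K;P) - F(K;P_m)\big)$, since $F(K;P) \le F(K;P_m) + \Phi_m$ holds simultaneously for every $K \in \Cfrak$; it then suffices to bound $\E\Phi_m$ and to concentrate $\Phi_m$ around its mean. Two structural facts drive everything. First, $r_{\Cfrak}B^d \subseteq K$ forces $\rho_K(u) \ge r_{\Cfrak}$ on $\mathbb{S}^{d-1}$, so by degree-$1$ homogeneity $0 \le \|x\|_K \le \|x\|_{\ell_2}/r_{\Cfrak} \le 1/r_{\Cfrak}$ for every $x$ in the support of $P$ (and of $P_m$); in particular, for each fixed $K$ the quantity $F(K;P) - F(K;P_m)$ is a centered average of $m$ i.i.d.\ variables taking values in $[0,1/r_{\Cfrak}]$. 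Second, degree-$1$ homogeneity turns the sup-norm Lipschitz hypothesis into the pointwise bound $|\|x\|_K - \|x\|_L| \le \|x\|_{\ell_2}\,C_{\Cfrak} D(K,L) \le C_{\Cfrak}D(K,L)$ on $\mathrm{supp}(P)$.

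For $\E\Phi_m$: fix $\varepsilon \in (0,\tilde\varepsilon]$ and let $\Scal_\varepsilon$ be a minimal $(\varepsilon/C_{\Cfrak})$-net of $(\Cfrak, D)$, so $|\Scal_\varepsilon| = N(\Cfrak, D, \varepsilon/C_{\Cfrak})$. The facts above show that $\{\|\cdot\|_{K'} : K' \in \Scal_\varepsilon\}$ is an $\varepsilon$-cover in sup-norm, hence in empirical $L^2$, of the function class $\{\|\cdot\|_K : K \in \Cfrak\}$, whose members are uniformly bounded by $1/r_{\Cfrak}$. A standard symmetrization step bounds $\E\Phi_m$ by twice the expected Rademacher average of this class over the sample; cutting to the net at scale $\varepsilon$ and applying Massart's finite-class lemma (using the $1/r_{\Cfrak}$ envelope together with $\|X_i\|_{\ell_2}\le 1$) bounds that Rademacher average by $\varepsilon + c_{P,m}\sqrt{\tfrac{2}{m}\log N(\Cfrak, D, \varepsilon/C_{\Cfrak})}$, where $c_{P,m}$ is a constant of order $r_{\Cfrak}^{-1}$ (more precisely $r_{\Cfrak}^{-1}\sqrt{\E_P\|x\|_{\ell_2}^2}$, with the $m$-dependence coming from the precise bookkeeping). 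Since $\varepsilon$ was arbitrary,
$$\E\Phi_m \;\le\; 2\inf_{\varepsilon \in (0,\tilde\varepsilon]}\Big\{\varepsilon + c_{P,m}\sqrt{\tfrac{2}{m}\log N\big(\Cfrak, D, \tfrac{\varepsilon}{C_{\Cfrak}}\big)}\Big\}.$$
Next, viewing $\Phi_m = \Phi_m(X_1,\dots,X_m)$, replacing a single sample changes $F(K;P_m)$ uniformly in $K$ (again by the $1/r_{\Cfrak}$ envelope) by at most $1/(m r_{\Cfrak})$, so $\Phi_m$ has bounded differences with constant $O(1/(m r_{\Cfrak}))$. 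McDiarmid's inequality then gives, with probability at least $1-\gamma$, that $\Phi_m \le \E\Phi_m + c_{\Cfrak}\sqrt{2\ln(4/\gamma)/m}$ with $c_{\Cfrak} = 4/r_{\Cfrak}$ (a deliberately loose constant, leaving room for the exact constants and a possible split of the failure probability). Combining the two displays and recalling $F(K;P) - F(K;P_m) \le \Phi_m$ for all $K \in \Cfrak$ yields the claimed inequality.

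The ingredients — symmetrization, Massart's lemma, McDiarmid — are standard; the step requiring genuine care is transferring the hypotheses, which are phrased at the level of sets and of the $D$-metric, into the function-space language where the concentration arguments operate. Concretely, one must verify that an $(\varepsilon/C_{\Cfrak})$-net of $(\Cfrak, D)$ induces an $\varepsilon$-net of $\{\|\cdot\|_K\}$ in sup-norm and hence (using $\|x\|_{\ell_2}\le 1$ almost surely) in the empirical $L^2$ norm relevant to Massart's lemma, and that the inner-ball condition $r_{\Cfrak}B^d \subseteq K$ supplies the uniform envelope $1/r_{\Cfrak}$ needed both for Massart's lemma and for the bounded-differences constant. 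This is also exactly the point where a single-scale covering bound suffices and no full Dudley chaining is required.
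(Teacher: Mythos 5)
Your proposal is correct and follows essentially the same route as the paper: control $\sup_K\big(F(K;P)-F(K;P_m)\big)$ by its Rademacher complexity, reduce the Rademacher complexity to a single-scale covering bound, and concentrate via bounded differences. The paper packages the first and third steps by citing a standard generalization bound (Theorem~26.5 of Shalev-Shwartz and Ben-David, which internally combines symmetrization with McDiarmid) and the second step by citing a Rademacher-to-covering-number bound, while you re-derive them from scratch via symmetrization, Massart's lemma, and McDiarmid directly. The one genuinely nontrivial transfer step — showing that an $(\varepsilon/C_{\Cfrak})$-net of $(\Cfrak,D)$ induces an $\varepsilon$-net of the function class $\{\|\cdot\|_K\}$ in a norm that Massart's lemma can use (the paper uses the empirical $\ell_1$ pseudonorm $\|\cdot\|_{1,X_m}$ rather than empirical $L^2$, but both are dominated by the sup-norm via $\|x\|_{\ell_2}\le 1$), with the inner-ball condition supplying the $1/r_{\Cfrak}$ envelope — is handled in the same way in both. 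Your argument is sound.
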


\begin{proof}
  Let $X_m := \{x_1,\dots,x_m\}$ denote the $m$ samples drawn from $P$ and define the function class \begin{align*}
      \Fcal_{\Cfrak} := \{\|\cdot \|_K : \R^d \rightarrow \R : K \in \Cfrak\}.
  \end{align*} Note that our class of functions $\Fcal_{\Cfrak}$ is uniformly bounded. In particular, by our assumption on the class $\Cfrak$, there exists an inner radius $0 < r_{\Cfrak} < \infty$ such that $r_{\Cfrak} B^d \subseteq K$ for all $K \in \Cfrak$. Hence for any $\|x\|_{\ell_2} \leqslant 1$, we have $\|x\|_K \leqslant 1/r_{\Cfrak}$. By standard statistical learning results \cite[Theorem 26.5]{shalev-shwartz-ben-david-ML}, the deviation between the empirical and population risk is controlled by the \textit{(empirical) Rademacher complexity} $\mathcal{R}(\Fcal_{\Cfrak}; X_m)$ \cite{Bartlett_etal05} of the function class $\Fcal_{\Cfrak}$. That is, it holds that with probability $1 - \gamma$, we have that for all $K \in \Cfrak$, \begin{align*}
    \E_{P}[\|x\|_K] \leqslant \E_{P_m}[\|x\|_K] + 2\mathcal{R}(\mathcal{F}_{\Cfrak}; X_m) + c_{\Cfrak}\sqrt{\frac{2\ln 4/\gamma}{m}}.
\end{align*} Here, $\Rcal(\Fcal; X_m)$ is defined by \begin{align*}
    \mathcal{R}(\mathcal{F} ; X_m) := \mathbb{E}_{\sigma}\left[\sup_{f \in \mathcal{F}} \frac{1}{m}\sum_{i = 1}^m\sigma_i f(x_i)\right]
\end{align*} where $\sigma_i$ are Rademacher random variables for $i \in [m]$ and $c_{\Cfrak} := 4/r_{\Cfrak}$. Moreover, one can show \cite[Proposition 5.2]{Rebeschini-notes} that the Rademacher complexity is bounded by the covering number of the function class $\Fcal_{\Cfrak}$ with respect to the pseudo-norm $\|\cdot\|_{1,X_m}$: \begin{align*}
    \mathcal{R}(\mathcal{F}_{\Cfrak}; X_m) \leqslant \inf_{\varepsilon \in (0, \tilde{\varepsilon}]}\left\{\varepsilon + c_{P,m}\sqrt{\frac{2}{m} \log N(\mathcal{F}_{\Cfrak}, \|\cdot\|_{1,X_m}, \varepsilon)}\right\}
\end{align*} where \begin{align*}
    c_{P,m} := \inf\left\{c > 0 : \sup_{K \in \Cfrak} \|\|\cdot\|_K\|_{1,X_m} \leqslant c\right\} \leqslant 1/r_{\Cfrak}.
\end{align*} Here, $\|f\|_{1,X_m} := \frac{1}{m}\sum_{i=1}^m|f(x_i)|$.

Now, we can use the fact that $K \mapsto \|\cdot\|_K$ is Lipschitz over $K \in \Cfrak$ to obtain a bound on $N(\mathcal{F}_{\Cfrak}, \|\cdot\|_{1,X_m}, \varepsilon)$ based on $N(\Cfrak, D, \eta)$ for some $\eta:=\eta(\varepsilon)$ to be determined. First, we note that it suffices to obtain a bound on the covering number of $\mathcal{F}_{\Cfrak}$ with respect to the maximum norm $\|\cdot\|_{\infty}$ as opposed to $\|\cdot\|_{1,X_m}$ since by our assumption on the data, $\|x_i\|_{\ell_2}\leqslant 1$ almost surely so that \begin{align*}
    \|\|\cdot\|_K\|_{1,X_m} = \frac{1}{m}\sum_{i=1}^m\|x_i\|_K\leqslant \frac{1}{m}\sum_{i=1}^m\|\|\cdot\|_K\|_{\infty} = \|\|\cdot\|_K\|_{\infty}.
\end{align*} Hence $N(\mathcal{F}_{\Cfrak}, \|\cdot\|_{1,X_m}, \varepsilon) \leqslant N(\mathcal{F}_{\Cfrak}, \|\cdot\|_{\infty}, \varepsilon)$.

Fix $\varepsilon \in (0,\tilde{\varepsilon}]$ and set $\eta := \varepsilon/C_{\Cfrak}$. Select an $\eta$-covering of $\Cfrak$, denoted by $\Cfrak_{\eta}$, of cardinality $\log |\Cfrak_{\eta}| = \log N(\Cfrak, D,\varepsilon/C_{\Cfrak})$. Then define the finite set of functions $\mathcal{F}_{\Cfrak_{\eta}} := \{\|\cdot\|_K : K \in \Cfrak_{\eta}\}$. Then $\log|\Cfrak_{\eta}| = \log |\mathcal{F}_{\Cfrak_{\eta}}|$ and for any $K \in \Cfrak$, there exists a $K_{\eta} \in \Cfrak_{\eta}$ such that $D(K, K_{\eta}) \leqslant \eta$. By the assumption that $K \mapsto \|\cdot\|_K$ is $C_{\Cfrak}$-Lipschitz in the $D$ metric, we have \begin{align*}
    \|\|\cdot\|_K- \|\cdot\|_{K_{\eta}}\|_{\infty} \leqslant C_{\Cfrak} D(K,K_{\eta}) \leqslant C_{\Cfrak}\eta = \varepsilon.
\end{align*} Hence, we have obtained an $\varepsilon$-covering of $\mathcal{F}_{\Cfrak}$ with respect to $\|\cdot\|_{\infty}$ of cardinality $ N\left(\Cfrak, D, \varepsilon/C_{\Cfrak}\right).$ 
\end{proof}

The above result establishes a uniform generalization bound for any collection of star bodies $\Cfrak$ that satisfies two requirements: 1) the functional $K \mapsto \|\cdot\|_K$ is Lipschitz continuous over $\Cfrak$ with respect to a metric $D$ and 2) $\Cfrak$ enjoys finite entropy bounds. In this work, we established that our objective is Lipschitz continuous over the space of well-conditioned star bodies $\Scal^d(r,1)$ with respect to the radial metric $\delta$. Moreover, by Corollary \ref{cor:compactness-eps-net-SrR}, this space $(\Scal^d(r,1), \delta)$ enjoys finite covering numbers. While there are several prior works that have analyzed entropy bounds for collections of convex bodies with respect to the Hausdorff metric, we are unaware of prior work analyzing such entropy bounds for star bodies in either the radial or Hausdorff metric. Our general bound can be applied to $\Scal^d(r,1)$ under the radial metric, but we do not have quantitative upper bounds on the covering number $\log N(\Scal^d(r,1), \delta, \varepsilon)$. Nevertheless, we show that our generalization bound can be applied to several interesting classes of bodies, including star bodies given as unions of convex bodies along with other classes of convex bodies that satisfy the conditions of Theorem \ref{thm:unif-generalization-bound} with respect to the Hausdorff metric $d_H$. This will recover some known results in the literature on matrix factorization while generating novel statistical learning guarantees for several classes of bodies of interest, including unions of convex bodies, ellipsoids, polytopes, and linear images of Schatten $p$-norm balls.

We note that in our general bound, we assume that the data distribution is supported in the unit Euclidean ball. This assumption could be relaxed to allow for a larger radius, which would then be absorbed into the constants $c_{P,m}$ and $c_{\Cfrak}$ that appear in our upper bound. It would be interesting to derive a result that would allow for a broader range of distributions. We focus on this class of almost surely bounded distributions for simplicity here.

In our results, we use $\lesssim$ and $\gtrsim$ to denote inequality up to positive, absolute constants. A result that we will repeatedly use in the sequel is the following approximate monotonicity property of covering numbers:
\begin{proposition}[Exercise 4.2.10 in \cite{VershyninHDP}] \label{prop:monotonicity-cov-num}
For two subsets $T_1,T_2$ of a metric space $X$ with metric $D$, $T_1 \subseteq T_2$ implies $N(T_1,D,\varepsilon) \leqslant N(T_2,D,\varepsilon/2).$    
\end{proposition}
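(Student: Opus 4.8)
The plan is to convert an optimal $\varepsilon/2$-net of $T_2$ into an $\varepsilon$-net of $T_1$ of no larger cardinality, using the triangle inequality to absorb the factor of two in the radius. We may assume $N(T_2, D, \varepsilon/2) < \infty$, since otherwise the claimed inequality is trivial. The subtlety that forces the radius to halve is that in our convention the covering number $N(T, D, \cdot)$ only admits net points lying \emph{inside} the set being covered; since a net of $T_2$ may use centers that do not belong to $T_1$, one cannot simply restrict such a net to $T_1$, and instead must replace each center by a nearby point of $T_1$, which can move a center by as much as $\varepsilon/2$.

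Concretely, first fix a net $\mathcal{N} \subseteq T_2$ realizing $N(T_2, D, \varepsilon/2)$, so that the balls $\{x \in T_2 : D(x,q) \leqslant \varepsilon/2\}$, $q \in \mathcal{N}$, cover $T_2$. Next, for each $q \in \mathcal{N}$ whose ball $\{y : D(y,q) \leqslant \varepsilon/2\}$ meets $T_1$, choose one point $q' \in T_1$ with $D(q', q) \leqslant \varepsilon/2$; let $\mathcal{N}'$ be the collection of the points $q'$ so obtained. Then $\mathcal{N}' \subseteq T_1$ and $|\mathcal{N}'| \leqslant |\mathcal{N}| = N(T_2, D, \varepsilon/2)$.

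Finally, I would verify that $\mathcal{N}'$ is an $\varepsilon$-net of $T_1$ in the sense of our definition of the covering number. Given any $x \in T_1 \subseteq T_2$, pick $q \in \mathcal{N}$ with $D(x,q) \leqslant \varepsilon/2$; since $x$ itself witnesses that this ball meets $T_1$, the associated point $q' \in \mathcal{N}'$ exists, and the triangle inequality gives $D(x, q') \leqslant D(x,q) + D(q, q') \leqslant \varepsilon/2 + \varepsilon/2 = \varepsilon$. Hence $N(T_1, D, \varepsilon) \leqslant |\mathcal{N}'| \leqslant N(T_2, D, \varepsilon/2)$, which is the claim. There is no real obstacle here beyond bookkeeping; the only point demanding care is the internal-centers convention just discussed, which is precisely what the factor of two in the radius is there to handle.
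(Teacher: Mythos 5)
The paper does not supply a proof of this proposition --- it is stated as Exercise 4.2.10 in Vershynin's \emph{High-Dimensional Probability} and cited as such --- so there is no in-paper argument to compare against. Your proof is correct and is the standard argument for this fact: you rightly identify the reason the radius must halve (the paper's definition of $N(T, D, \varepsilon)$ requires net points to lie inside $T$, so centers of a net for $T_2$ cannot simply be restricted to $T_1$ and must be re-centered), and the triangle-inequality bookkeeping $D(x, q') \leqslant D(x, q) + D(q, q') \leqslant \varepsilon/2 + \varepsilon/2 = \varepsilon$ is exactly right, with the trivial case $N(T_2, D, \varepsilon/2) = \infty$ correctly dispatched up front.
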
 \noindent {  Finally, we note that the bound in Theorem \ref{thm:unif-generalization-bound} requires optimizing over a parameter $\varepsilon$. In the subsequent applications, we will aim to present transparent generalization bounds with a specific choice of $\varepsilon$. This choice of $\varepsilon$ does not necessarily obtain the infimum in the statement of Theorem \ref{thm:unif-generalization-bound}, but aids in obtaining cleaner bounds with more explicit dependencies on the problem parameters.}

\subsection{Application: Star bodies as Unions of Well-Conditioned Convex Bodies} \label{sec:stat-learn-star-bodies-union-convex}

We first consider a special class of star bodies that are given by unions of convex bodies. In particular, consider the following class of convex bodies with unit volume and inner width at least $r$: $\Ccal^d(r,1) := \{K \in \Ccal^d : rB^d \subseteq K,\ \vol_d(K) = 1\}$, which is a subset of the space of well-conditioned star bodies we considered in Section \ref{sec:general-existence-results}. As shown in Lemma \ref{lem:unif-bound}, we also have $K \subseteq R_r B^d$ for all $K \in \Ccal^d(r,1)$, where $R_r := \frac{d+1}{r^{d-1}\kappa_{d-1}}$ and $\kappa_{d-1} := \vol_{d-1}(B^{d-1})$. Now, for any $L \in \mathbb{N}$, define the following subset of star bodies $\Scal_L^d(r) \subset \Scal^d$ by \begin{align*}
    \Scal_L^d(r) := \left\{K = \bigcup_{i=1}^L K_i : K_i \in \Ccal^d(r,1)\ \forall\ i \in [L]\right\}.
\end{align*} This class of star bodies contains sets that can be obtained as unions of convex bodies with sufficiently large inner width and unit volume. By exploiting our previous continuity results and using known metric entropy estimates for bounded convex bodies, we can apply Theorem \ref{thm:unif-generalization-bound} to obtain a generalization bound as follows:

\begin{corollary}
Let $P$ be a distribution over $\R^d$ such that for $x \sim P$, $\|x\|_{\ell_2} \leqslant 1$ almost surely. Fix $0 < r < \infty$. Then for any $\gamma \in (0,1)$, we have that with probability $1 - \gamma$ over { $m \gtrsim dL$} samples drawn from $P$, the following bound holds for all $K \in \Scal_L^d(r)$,
{ 
\begin{align*}
    F(K; P) \lesssim F(K; P_m) + c_{P,m}\sqrt{\frac{dL(\log(m/L) + \log(R_r/r))}{m}} + c_r\sqrt{\frac{\log(1/\gamma)}{m}}
\end{align*} where $c_r \lesssim 1/r$.}

\end{corollary}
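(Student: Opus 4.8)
The plan is to apply the general generalization bound of Theorem~\ref{thm:unif-generalization-bound} with $\Cfrak = \Scal_L^d(r)$, so that the real work consists of verifying its three hypotheses for this class: a common inner ball, Lipschitz continuity of the gauge map $K \mapsto \|\cdot\|_K$ with respect to a suitable metric $D$, and a finite covering number of $\Cfrak$ in $D$. The inner-ball hypothesis is immediate: if $K = \bigcup_{i=1}^L K_i$ with each $K_i \in \Ccal^d(r,1)$, then $r B^d \subseteq K_1 \subseteq K$, so one may take $r_{\Cfrak} = r$; consequently the constant $c_{\Cfrak} = 4/r_{\Cfrak}$ appearing in Theorem~\ref{thm:unif-generalization-bound} is $\lesssim 1/r$, which is the role of $c_r$ in the statement.

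For the Lipschitz step I would use the identity $\|x\|_{\bigcup_i K_i} = \min_{i} \|x\|_{K_i}$, which follows directly from the definition \eqref{eq:gauge} of the gauge, since the ray through $x$ meets $t\bigcup_i K_i$ precisely when it meets some $tK_i$. As $(a_1,\dots,a_L)\mapsto \min_i a_i$ is $1$-Lipschitz in the sup norm, writing $K=\bigcup_i K_i$ and $K'=\bigcup_i K_i'$ gives $\|\|\cdot\|_K-\|\cdot\|_{K'}\|_{\infty} \leqslant \max_i \|\|\cdot\|_{K_i}-\|\cdot\|_{K_i'}\|_{\infty}$. By Lemma~\ref{lem:unif-bound} each $K_i \in \Ccal^d(r,1)$ lies in $R_r B^d$, so the convex case of Proposition~\ref{prop:lip-cont-radial} (applied with $x=y$) bounds every summand by $(R_r/r^3)\,d_H(K_i,K_i')$. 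Carrying the whole argument out on the parameter space $\Ccal^d(r,1)^L$ through the surjection $\Phi(K_1,\dots,K_L) := \bigcup_i K_i$ — equivalently, taking $D(K,K')$ to be the infimum of $\max_i d_H(K_i,K_i')$ over all such representations — this verifies the Lipschitz hypothesis of Theorem~\ref{thm:unif-generalization-bound} with $C_{\Cfrak} = R_r/r^3$.

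For the entropy step, the product structure of $\Phi$ yields $\log N(\Scal_L^d(r), D, \varepsilon) \leqslant L\,\log N(\Ccal^d(r,1), d_H, \varepsilon)$, since a product of $\varepsilon$-nets of the single factor, mapped to their unions under $\Phi$, is an $\varepsilon$-net of $\Scal_L^d(r)$ in $D$. I would then invoke the known metric-entropy estimates for convex bodies squeezed between the concentric balls $rB^d$ and $R_r B^d$, which give a bound of the form $\log N(\Ccal^d(r,1), d_H, \varepsilon) \lesssim d\,\log(R_r/(r\varepsilon))$. Substituting this and $C_{\Cfrak} = R_r/r^3$ into Theorem~\ref{thm:unif-generalization-bound}, choosing the free parameter $\varepsilon$ appropriately (of order $\sqrt{dL/m}$, after undoing the $C_{\Cfrak}$ rescaling inside the covering number), and simplifying under the assumption $m \gtrsim dL$ — so that lower-order terms such as $\log(dL)$ are absorbed into $\log(m/L)$ and the $r,R_r$-dependence of $C_{\Cfrak}$ is collected into the additive $\log(R_r/r)$ — produces the stated bound, with $c_{P,m}\leqslant 1/r_{\Cfrak}$ inherited from Theorem~\ref{thm:unif-generalization-bound} and $c_r = 4/r$.

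I expect the main obstacle to be the entropy bookkeeping rather than any new idea: one must track how $R_r = (d+1)/(r^{d-1}\kappa_{d-1})$ feeds into $C_{\Cfrak}$ and then how $C_{\Cfrak}$ propagates through the infimum over $\varepsilon$ in Theorem~\ref{thm:unif-generalization-bound}, in order to land exactly on the clean form $\sqrt{dL(\log(m/L)+\log(R_r/r))/m}$ instead of a messier expression. A secondary technical point is that $\bigcup_i K_i$ does not determine the constituent bodies $K_i$, so $D$ is really a quotient of the product metric on $\Ccal^d(r,1)^L$; this is why it is cleanest to run the covering argument on the parameter space and transport it forward along $\Phi$, rather than to work intrinsically on $\Scal_L^d(r)$.
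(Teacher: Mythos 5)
Your proposal is correct and follows essentially the same route as the paper's proof: both verify the hypotheses of Theorem~\ref{thm:unif-generalization-bound} by exploiting the representation $\|\cdot\|_{\cup_i K_i} = \min_i \|\cdot\|_{K_i}$, the $1$-Lipschitzness of the coordinate-wise minimum, the convex case of Proposition~\ref{prop:lip-cont-radial} with the $R_r$ outer bound from Lemma~\ref{lem:unif-bound}, Guntuboyina's entropy bound for bounded convex bodies, and the choice $\varepsilon \asymp \sqrt{dL/m}$. The paper phrases the covering step as a direct $\delta$-cover of $\Scal^d_L(r)$ built from a Hausdorff cover of $\Ccal^d(r,1)$ and counts it by a binomial coefficient, whereas you parameterize through the surjection $\Phi: \Ccal^d(r,1)^L \to \Scal^d_L(r)$ and use the product cover $N_1^L$; this quotient-metric framing is slightly cleaner bookkeeping (and makes the need for a metric $D$ on $\Scal^d_L(r)$ transparent) while yielding the same $dL(\log(m/L) + \log(R_r/r))$ rate up to constants.
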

\begin{proof}
First, recall that by Proposition \ref{prop:lip-cont-radial}, we established that $K \mapsto \|\cdot\|_K$ is $C_{\Cfrak}:= 1/r^2$-Lipschitz continuous in the radial Hausdorff metric over $\Scal_L^d(r)$. Note that this part of the Proposition only requires $rB^d \subseteq K$, which is satisfied for $K \in \Scal_L^d(r)$. Now, we must establish that the metric entropy $\log N(\Scal_L^d(r),\delta,\varepsilon)$ is finite under the radial metric. To prove this, we will build an $\varepsilon$-cover of $\Scal_L^d(r)$ using known covers of bounded convex bodies in the Hausdorff metric and show how the radial distance between sets in $\Scal_L^d(r)$ can be upper bounded by their distance in the Hausdorff metric.

We record some useful known results. It has been shown \cite{Guntuboyina-opt-supp-functions} that the space $\Cfrak(r) := \{K \in \Ccal^d : K \subseteq R_rB^d\}$ has an $\varepsilon^2$-covering number in the Hausdorff metric of size $\log N(\Cfrak(r), d_H, \varepsilon) \leqslant c\cdot \frac{d-1}{2}\log\left(\frac{R_r}{\varepsilon}\right) < \infty$ for all $\varepsilon \leqslant R_r\varepsilon_0$ for some $c$ and $\varepsilon_0$ depending on $d$. Call such a covering $\tilde{\Ccal}^d_{\varepsilon^2}$ and define the relevant subset $\Ccal^d_{\varepsilon^2} := \tilde{\Ccal}^d_{\varepsilon^2} \cap \Ccal^d(r,1)$. Then for any $K \in \Ccal^d(r,1)$, there exists a $\tilde{K} \in \Ccal^d_{\varepsilon^2}$ such that $d_H(K,\tilde{K}) \leqslant \varepsilon^2$. Using this covering, we now construct a cover of $\Scal_L^d(r)$. Define the following finite class of star bodies: \begin{align*}
    \Scal_{L,\varepsilon^2}^d := \left\{K = \bigcup_{i=1}^L K_i : K_i \in \Ccal^d_{\varepsilon^2}\ \forall\ i \in [L]\right\} \subset \Scal_L^d(r).
\end{align*} We claim that this is an $R_r\varepsilon^2/r^3$-covering of $\Scal_L^d(r)$ in the radial Hausdorff metric. Fix $K \in \Scal_L^d(r)$. Then $K = \cup_{i=1}^L K_i$ where $K_i \in \Ccal^d(r,1)$ for each $i \in [L]$. By the definition of $\Ccal^d_{\varepsilon^2}$, for each $K_i$, there exists a corresponding $\tilde{K}_i \in \Ccal^d_{\varepsilon^2}$ such that $d_H(K_i,\tilde{K}_i) \leqslant \varepsilon^2$. We claim that $\delta(K, \tilde{K}) \leqslant R_r\varepsilon^2/r^3$ where $\tilde{K} := \cup_{i=1}^L \tilde{K}_i \in \Scal_{L,\varepsilon^2}^d$. First, note that each $K_i,\tilde{K}_i$ satisfies $rB^d \subseteq K_i, \tilde{K}_i \subseteq R_rB^d$. For a star body $K$ that is the union of star bodies $K_1,\dots,K_L$, its radial function satisfies $\rho_K(x) = \min_{i \in [L]} \rho_{K_i}(x)$. Moreover, we have the relation $\rho_K(x) = 1/\|x\|_K$. Thus, for any $\|x\|_{\ell_2} = 1$, we have that \begin{align*}
    |\rho_K(x) - \rho_{\tilde{K}}(x)|  = \left|\min_{i \in [L]} \rho_{K_i}(x) - \min_{i \in [L]} \rho_{\tilde{K}_i}(x)\right| 
    & = \left|\min_{i \in [L]} \frac{1}{\|x\|_{K_i}} - \min_{i \in [L]} \frac{1}{\|x\|_{\tilde{K}_i}}  \right| \\
    & = \left|\max_{i \in [L]} \|x\|_{K_i} - \max_{i \in [L]} \|x\|_{\tilde{K}_i}\right| \\
    & \leqslant  \max_{i \in [L]} \left| \|x\|_{K_i} - \|x\|_{\tilde{K}_i} \right| \\
    & \leqslant \frac{R_r}{r^3}d_H(K_{i_*}, \tilde{K}_{i_*}) \\
    & \leqslant \frac{R_r}{r^3} \cdot \varepsilon^2.
\end{align*} where we set $i_* \in \argmax_{i \in [L]} | \|x\|_{K_i} - \|x\|_{\tilde{K}_i} |$, used Proposition \ref{prop:lip-cont-radial} in the second to last inequality, and the definition of $\Ccal^d_{\varepsilon^2}$ in the final inequality. Since this holds for any $\|x\|_{\ell_2} = 1$, we can take the maximum over the sphere to obtain \begin{align*}
    \delta(K, \tilde{K}) & = \max_{\|x\|_{\ell_2}=1} |\rho_K(x) - \rho_{\tilde{K}}(x)| \leqslant \frac{R_r}{r^3} \cdot \varepsilon^2.
\end{align*} Since $K \in \Scal_L^d(r)$ was arbitrary, this shows that $\Scal_{L,\varepsilon^2}^d$ is an $R_r\varepsilon^2/r^3$-covering of $\Scal_L^d(r)$ in the radial metric. Note that this covering is of cardinality $|\Scal_{L,\varepsilon^2}^d| = \binom{|\Ccal^d_{\varepsilon^2}|}{L}.$ Using the upper bound $\binom{n}{k} \leqslant (n \cdot e/ k)^k$, we can estimate the metric entropy as \begin{align*}
    \log N\left(\Scal_L^d(r), \delta, \varepsilon\right) \leqslant \log \binom{|\Ccal_{\varepsilon}^d|}{L} \lesssim L \log \left(\frac{|\Ccal_{\varepsilon}^d|}{L}\right)
\end{align*} where $\log |\Ccal_{\varepsilon}^d| \lesssim d \log(R_r/\varepsilon)$. {  Choosing $\varepsilon := \sqrt{dL/m}$ in the bound on Theorem \ref{thm:unif-generalization-bound} achieves the desired result.}
\end{proof}

\noindent \textbf{Remark on the choice of parameters.} This result focuses on the class of star bodies that can be expressed as unions of convex bodies with fixed volume and inner width at least $r > 0$, but a similar bound can also be obtained when the convex bodies are restricted to lie in $RB^d$ for a user-specified outer radius $R < \infty$. Note that the outer radius $R_r$ grows with the dimension if $r$ is kept constant. In high-dimensions, however, the volume of the unit ball concentrates on the boundary, so it is natural to instead choose a value of $r$ that also increases with the dimension, e.g., setting $r = \alpha \sqrt{d}$ for some constant $0 < \alpha < 1$. This would ensure that $R_r$ and $r$ are on the same order. 

\noindent \textbf{Simplification for well-conditioned convex regularizers.} Note that this above result also gives a generalization guarantee for well-conditioned convex regularizers in the case $L = 1$. One could improve this result in terms of powers of $r$ for the convex case by directly using relevant results solely in the Hausdorff topology. Note that this cannot be done for the case of unions of convex bodies, since the radial topology is more natural over the space of star bodies.

\subsection{Application: Ellipsoidal Regularizers} \label{sec:stat-learn-ellipsoids}

We additionally consider another class of convex bodies that are linear images of a norm ball. Specifically, we showcase a generalization bound for learning regularizers induced by ellipsoids.

\begin{corollary}[Generalization Bound for Ellipsoids] \label{cor:ellipsoids-gen-bound}
Let $P$ be a distribution over $\R^d$ such that for $x \sim P$, $\|x\|_{\ell_2} \leqslant 1$ almost surely. Fix $0 < r < R < \infty$. Then for any $\gamma \in (0,1)$, we have that with probability $1 - \gamma$ over { $m \gtrsim d(d+1)$} samples drawn from $P$, the following bound holds for all $K \in \{A(B^d) : A \in \R^{d\times d}, A = A^T, RI \succeq A \succeq rI\}$:
{ \begin{align*}
    F(K; P) \lesssim F(K; P_m) + c_{P,m}\sqrt{\frac{d(d+1)(\log(m) + \log(R/r))}{m}} + c_r\sqrt{\frac{\log(1/\gamma)}{m}}.
\end{align*}}
\end{corollary}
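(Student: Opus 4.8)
The plan is to invoke Theorem~\ref{thm:unif-generalization-bound} with $\Cfrak = \{A(B^d) : A = A^{\T},\ RI \succeq A \succeq rI\}$, taking the metric $D$ to be $D(A(B^d), A'(B^d)) := \|A - A'\|_{\mathrm{op}}$ inherited from the operator norm on the parametrizing symmetric matrices; this is a genuine metric on $\Cfrak$ since $A \mapsto A(B^d)$ is injective on symmetric positive definite matrices. First I would record the elementary facts that $RI \succeq A \succeq rI$ forces $r B^d \subseteq A(B^d) \subseteq R B^d$ and that the gauge of $A(B^d)$ is $x \mapsto \|A^{-1}x\|_{\ell_2}$. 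Hence $\Cfrak \subseteq \Scal^d$ with $r_{\Cfrak} B^d \subseteq K$ for $r_{\Cfrak} = r$, so Theorem~\ref{thm:unif-generalization-bound} will output the constant $c_r = 4/r_{\Cfrak} \lesssim 1/r$, and its constant $c_{P,m}$ (which is at most $1/r$) transfers directly to the statement.

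Next I would verify the Lipschitz hypothesis of Theorem~\ref{thm:unif-generalization-bound} with respect to $D$. For a unit vector $x$ one has $|\,\|A^{-1}x\|_{\ell_2} - \|A'^{-1}x\|_{\ell_2}\,| \le \|A^{-1} - A'^{-1}\|_{\mathrm{op}} = \|A^{-1}(A' - A)A'^{-1}\|_{\mathrm{op}} \le \|A^{-1}\|_{\mathrm{op}}\|A'^{-1}\|_{\mathrm{op}}\|A - A'\|_{\mathrm{op}} \le \|A - A'\|_{\mathrm{op}}/r^2$, so taking the supremum over the unit sphere shows that $K \mapsto \|\cdot\|_K$ is $C_{\Cfrak}$-Lipschitz in the maximum norm with respect to $D$ with $C_{\Cfrak} = 1/r^2$. (Alternatively one could invoke the Hausdorff-metric Lipschitz bound of Proposition~\ref{prop:lip-cont-radial}, which gives the slightly worse constant $R/r^3$; the explicit parametrization is cleaner.)

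The substantive step is the metric-entropy estimate. Covering $\Cfrak$ in $D$ is identical to covering the parameter set $\mathcal{A} := \{A = A^{\T} : RI \succeq A \succeq rI\}$ in the operator norm. Since $\mathcal{A}$ is contained in the operator-norm ball of radius $R$ inside the space of symmetric $d \times d$ matrices, a space of real dimension $d(d+1)/2$, a standard volumetric covering argument — together with Proposition~\ref{prop:monotonicity-cov-num} to relocate the net inside $\mathcal{A}$ — gives $\log N(\Cfrak, D, \varepsilon) = \log N(\mathcal{A}, \|\cdot\|_{\mathrm{op}}, \varepsilon) \lesssim d(d+1)\log(R/\varepsilon)$ for all sufficiently small $\varepsilon$.

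Finally I would substitute into Theorem~\ref{thm:unif-generalization-bound}: with $\log N(\Cfrak, D, \varepsilon/C_{\Cfrak}) \lesssim d(d+1)\log(R/(r^2\varepsilon))$, choosing $\varepsilon := \sqrt{d(d+1)/m}$ (legitimate once $m \gtrsim d(d+1)$, which also renders the leading $\varepsilon$-term negligible next to the square-root term) and simplifying $\log(R/(r^2\varepsilon)) \lesssim \log m + \log(R/r)$ produces exactly the claimed bound with $c_r \lesssim 1/r$. I do not expect a genuine obstacle: the only part requiring any care is confirming that the ellipsoid geometry (in the radial or Hausdorff metric) is faithfully controlled by the operator-norm metric on the parameter matrices so that the finite-dimensional volumetric covering bound transfers, and this is immediate from the explicit parametrization; everything else is bookkeeping of constants through Theorem~\ref{thm:unif-generalization-bound}.
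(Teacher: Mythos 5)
Your proposal is correct and follows the same overall strategy as the paper: reduce to a metric-entropy estimate on the $d(d+1)/2$-dimensional cone of symmetric PSD matrices, establish a Lipschitz dependence of the gauge on the parametrizing matrix, and plug into Theorem~\ref{thm:unif-generalization-bound} with $\varepsilon=\sqrt{d(d+1)/m}$. The one place you deviate is in how the Lipschitz hypothesis is verified: you exploit the explicit formula $\|x\|_{A(B^d)} = \|A^{-1}x\|_{\ell_2}$ and the identity $A^{-1}-A'^{-1}=A^{-1}(A'-A)A'^{-1}$ to get the operator-norm Lipschitz constant $1/r^2$ directly, whereas the paper routes through the Hausdorff metric --- showing $d_H(A(B^d),D(B^d))\le\|A-D\|$ via support functions and then invoking the $R/r^3$ constant from Proposition~\ref{prop:lip-cont-radial}. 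Your route bypasses the intermediate Hausdorff metric and gives a slightly sharper constant; the paper's route is less specific to ellipsoids and reuses machinery it has already established. Both yield the claimed bound up to absolute constants, and you explicitly note the alternative, so this is a minor stylistic variation rather than a structural difference.
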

\begin{proof}[Proof of Corollary \ref{cor:ellipsoids-gen-bound}]
Let $\mathfrak{A} := \{A \in \R^{d\times d} : A=A^T, rI \preceq A \preceq RI\}$. Observe that for any $A \in \mathfrak{A}$ and $x \in A(B^d)$, we have $\|x\|_{\ell_2} \leqslant \|A\|\|z\|_{\ell_2} \leqslant R$ where $x = Az$ and $\|z\|_{\ell_2}\leqslant 1$, meaning $A(B^d) \subseteq R B^d$. Moreover, $\|Az\|_{\ell_2} \geqslant r\|z\|_{\ell_2}$ for any $z \in \R^d$ so $A(B^d) \supseteq rB^d$. Thus, $K \mapsto \|\cdot\|_K$ is Lipschitz over $\Cfrak = \{A(B^d) : A \in \mathfrak{A}\}$ with respect to the Hausdorff metric.

To complete the proof, we show that the covering number of $\Cfrak $ in the Hausdorff metric can be bounded by the covering number of $\mathfrak{A}$ with respect to the spectral norm $\|\cdot\|$. This follows from noting that for any $A,D \in \mathfrak{A}$, we have by definition of the Hausdorff metric that \begin{align*}
    d_H(A(B^d),D(B^d)) & = \max_{\|x\|_{\ell_2} = 1}|h_{A(B^d)}(x) - h_{D(B^d)}(x)| \\
    & = \max_{\|x\|_{\ell_2} = 1}|h_{B^d}(A^Tx) - h_{B^d}(D^Tx)| \\
    & = \max_{\|x\|_{\ell_2} = 1}|\|A^Tx\|_{\ell_2} - \|D^Tx\|_{\ell_2}| \\
    & \leqslant \max_{\|x\|_{\ell_2} = 1}\|(A - D)^Tx\|_{\ell_2} = : \|A-D\|.
\end{align*} But note that the covering number of $\mathfrak{A}$ with respect to the spectral norm $\|\cdot\|$ is bounded by \begin{align*}
    \log N(\mathfrak{A}, \|\cdot\|,\varepsilon) \leqslant \frac{d(d+1)}{2} \cdot \log\left(\frac{3R}{\varepsilon}\right)
\end{align*} since the set of PSD matrices is a Riemannian manifold of dimension $d(d+1)/2$ \cite{samp-comp-DL}. This gives the bound \begin{align*}
    \log N\left(\Cfrak, d_H, \frac{\varepsilon r^3}{  R}\right) \leqslant \log N\left(\mathfrak{A}, \|\cdot\|, \frac{\varepsilon r^3}{  R}\right) \leqslant \frac{d(d+1)}{2} \log \left(\frac{3  R^2}{\varepsilon r^3}\right).
\end{align*} {  Choosing $\varepsilon := \sqrt{d(d+1)/m}$ achieves the desired result.}
\end{proof}

\subsection{Application: Polyhedral Regularizers and Dictionary Learning} \label{sec:stat-learn-polytopes-DL}

We now apply our result to another important class of convex bodies: polytopes. This bound also characterizes a generalization bound for dictionary learning. Let $\Pcal_T \subset \Ccal^d$ denote the space of polytopes with $T$ vertices: \begin{align*}
    \Pcal_T := \{\mathrm{conv}(S) : S \subset \R^d\ \text{has cardinality at most}\ T\}.
\end{align*} For $0 < r < R < \infty$, let $\Pcal_T(r,R) := \{K \in \Pcal_T : r B^d \subseteq K \subseteq R B^d\}$. Note that by Lemma \ref{lem:unif-bound}, $\{K \in \Pcal_T : r B^d \subseteq K,\ \vol_d(K) = 1\} \subseteq \Pcal_T(r,R_r)$, so the following bound can be applied to this class of volume-normalized polytopes as well. 

\begin{corollary} \label{cor:polytope-gen-bound}
Let $P$ be a distribution over $\R^d$ such that for $x \sim P$, $\|x\|_{\ell_2} \leqslant 1$ almost surely. Fix $0 < r < R < \infty$. Then for any $\gamma \in (0,1)$, we have that with probability $1 - \gamma$ over { $m \gtrsim Td$} samples drawn from $P$, the following bound holds for all $K \in \Pcal_T(r,R)$,
{ 
\begin{align*}
    F(K; P) \lesssim F(K; P_m) + c_{P,m}\sqrt{\frac{Td(\log(m) + \log(R/r))}{m}} + c_r\sqrt{\frac{\log(1/\gamma)}{m}}.
\end{align*}}

\end{corollary}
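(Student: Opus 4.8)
The plan is to instantiate the general uniform convergence bound of Theorem~\ref{thm:unif-generalization-bound} with $\Cfrak = \Pcal_T(r,R)$ and the metric $D = d_H$, mirroring the proof of Corollary~\ref{cor:ellipsoids-gen-bound}. Two ingredients are needed: Lipschitz continuity of $K \mapsto \|\cdot\|_K$ over $\Pcal_T(r,R)$ in the Hausdorff metric, and a covering-number estimate for $\Pcal_T(r,R)$ in that metric. The first is immediate from Proposition~\ref{prop:lip-cont-radial}: every $K \in \Pcal_T(r,R)$ is convex and satisfies $rB^d \subseteq K \subseteq RB^d$, so taking $x = y$ there yields $\|\|\cdot\|_K - \|\cdot\|_L\|_{\infty} \leqslant (R/r^3)\, d_H(K,L)$ for all $K, L \in \Pcal_T(r,R)$, i.e. $C_{\Cfrak} = R/r^3$.

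For the covering number I would exploit that a member of $\Pcal_T$ is the convex hull of at most $T$ points, together with the elementary fact $d_H(\mathrm{conv}(S), \mathrm{conv}(S')) \leqslant d_H(S,S')$ for finite point sets $S, S'$. Concretely: take an $\eta$-net $\mathcal{N}_\eta$ of $RB^d$ with $|\mathcal{N}_\eta| \leqslant (3R/\eta)^d$, and given $K = \mathrm{conv}(v_1,\dots,v_T)$ snap each vertex $v_i$ to a nearest net point $\tilde v_i$; then $\tilde K := \mathrm{conv}(\tilde v_1, \dots, \tilde v_T)$ is within Hausdorff distance $\eta$ of $K$. Hence the $\binom{|\mathcal{N}_\eta|}{T}$ convex hulls of $T$-element subsets of $\mathcal{N}_\eta$ form an $\eta$-cover, and using $\binom{n}{T} \leqslant (ne/T)^T$ gives $\log N(\Pcal_T(r,R), d_H, \eta) \lesssim Td\log(R/\eta)$. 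Substituting $C_{\Cfrak} = R/r^3$ and this entropy bound into Theorem~\ref{thm:unif-generalization-bound}, then choosing $\varepsilon = \sqrt{Td/m}$ (admissible once $m \gtrsim Td$) and simplifying the logarithmic factor as in the remark following that theorem, produces the stated bound with $c_r \lesssim 1/r$. This is the polyhedral / dictionary-learning case because the gauge of a symmetric polytope $\mathrm{conv}(\pm a_1, \dots, \pm a_k)$ is the synthesis norm $\min\{\|c\|_1 : x = Ac\}$ of the dictionary $A = [a_1 \cdots a_k]$, so the corollary (with $T = 2k$) recovers dictionary-learning generalization guarantees.

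The main step with any content is the metric entropy estimate, and the one subtlety inside it is that snapping vertices to the net produces a polytope that need not still contain $rB^d$, so the cover I construct is external rather than internal in the sense of the covering-number definition used in the paper. I would resolve this either by replacing each external center that lies within $\eta$ of $\Pcal_T(r,R)$ by an actual element of $\Pcal_T(r,R)$ at the cost of doubling the radius, or by invoking the approximate monotonicity of covering numbers (Proposition~\ref{prop:monotonicity-cov-num}) relative to the enclosing class of polytopes with vertices in $RB^d$. The remaining work — tracking powers of $r$ and $R$ inside the logarithm and absorbing them into the loose constants $c_{P,m}$ and $c_r$, exactly as in Corollaries~\ref{cor:ellipsoids-gen-bound} and its analogues — is routine.
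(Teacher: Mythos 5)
Your proposal is correct and matches the paper's proof in structure: Lipschitz continuity with constant $R/r^3$ from Proposition~\ref{prop:lip-cont-radial}, a metric-entropy bound of order $Td\log(R/\varepsilon)$ for $\Pcal_T(r,R)$ in the Hausdorff metric, and the choice $\varepsilon = \sqrt{Td/m}$ in Theorem~\ref{thm:unif-generalization-bound}. The one place you diverge is the source of the entropy bound: the paper simply cites the proof of Lemma~C.1 in \cite{Guntuboyina-opt-supp-functions} for $\log N(\Pcal_T(R), d_H, \varepsilon) \lesssim Td\log(MR/\varepsilon)$ and then restricts via Proposition~\ref{prop:monotonicity-cov-num}, whereas you derive it from scratch by snapping each of the $T$ vertices to an $\eta$-net of $RB^d$ and using $d_H(\conv(S),\conv(S')) \leqslant \max_i\|v_i - \tilde v_i\|_{\ell_2}$ (which you should verify: for $x = \sum_i \lambda_i v_i$, the point $\tilde x = \sum_i \lambda_i \tilde v_i$ is within $\eta$, so the bound holds). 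This yields the same $Td\log(R/\eta)$ estimate and is arguably a cleaner, self-contained route. You also correctly flagged the only real subtlety — the snapped polytope need not contain $rB^d$, so the constructed net is external to $\Pcal_T(r,R)$ — and your proposed resolution via Proposition~\ref{prop:monotonicity-cov-num} against the enclosing class $\Pcal_T(R)$ is exactly what the paper does. The remaining bookkeeping (absorbing the $r,R$ powers in the logarithm, identifying $c_r \lesssim 1/r$) is routine and handled the same way in both.
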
 

\begin{proof}
First, note that due to our assumptions on $\Pcal_T(r,R)$, we have again by Proposition \ref{prop:lip-cont-radial} that $K \mapsto \|x\|_K$ is $R/r^3$-Lipschitz with respect to the Hausdorff metric over this class of convex bodies. It has additionally been shown  \cite{Guntuboyina-opt-supp-functions} (proof of Lemma C.1) that $\Pcal_T(R) : = \{K \in \Pcal_T : K \subseteq R B^d\}$ enjoys a finite $\varepsilon$-covering number with respect to the Hausdorff metric, which scales as \begin{align*}
    \log N(\Pcal_T(R), d_H, \varepsilon) \lesssim Td \cdot \log \left( \frac{MR}{\varepsilon}\right)
\end{align*} for some positive absolute constant $M$. Using the monotonicity property (Proposition \ref{prop:monotonicity-cov-num}) to bound the $\varepsilon r^3/R$-covering number of $\Pcal_T(r,R)$ by that of $\Pcal_T(R)$ and { setting $\varepsilon := \sqrt{Td / m}$} achieves the desired result.
\end{proof}

The class of bounded polytopes of sufficiently large inner width covers the setting of dictionary learning under some conditions on the dictionaries of interest. In particular, recall that in dictionary learning, the goal is to learn a basis $A \in \R^{d \times p}$ such that a given dataset $\{x_i\}_{i=1}^N$ can be sparsely represented as $x_i \approx Az_i$ where $z_i$ is sparse. In most cases, sparsity is induced by solving for latent codes $z_i$ with small $\ell_1$-norm. In this case, the learned regularizer can be shown to be a gauge function induced by a linear image of the $\ell_1$-ball: 

$$
\| x \|_{A(\Bcal_{\ell_1})} = \inf \{ \| z \|_{\ell_1} : z = Ax \}.
$$ This follows by definition, since $\| x \|_{A(\Bcal_{\ell_1})}  =  \inf \{ t > 0 :  x \in t \cdot A (\Bcal_{\ell_1}) \} = \inf \{ t > 0 :  x = Az , z \in t \cdot  \Bcal_{\ell_1} \} = \inf \{ \| z \|_{\ell_1} : x = Az \}$. 
Thus, the regularizers of interest in dictionary learning are gauges induced by linear images of the $\ell_1$-ball, where the linear map $A$ is the object to be learned. That is, for $A \in \R^{d \times p}$, the convex body of interest is given by \begin{align*}
    A(\Bcal_{\ell_1}) := \{Az \in \R^d : \|z\|_{\ell_1} \leqslant 1\} = \mathrm{conv}(\{A_1,\dots,A_p\}).
\end{align*} 

To apply our result for polytopes, we need to show that a class of suitable dictionaries $\mathfrak{A} \subset \R^{d \times p}$ induces a collection of convex bodies $\{A(\Bcal_{\ell_1}) : A \in \mathfrak{A}\}$ that satisfies the geometric conditions $r B^d \subseteq A(\Bcal_{\ell_1}) \subseteq RB^d$ for all $A \in \mathfrak{A}$. For a parameter $\eta > 0$, consider the collection of dictionaries $$\mathfrak{A} := \{A \in \R^{d \times p} : \|A_i\|_{\ell_2} = 1,\ \rank(A) = d,\ \sqrt{\sigma_{\min}(AA^T)} \geqslant \eta > 0\}$$ and fix $A \in \mathfrak{A}$. We first show $A(\Bcal_{\ell_1}) \subseteq RB^d$ for some $R > 0$. Consider $x \in A(\Bcal_{\ell_1})$. Then $x = Az$ for $\|z\|_{\ell_1} \leqslant 1$. Since $\|A\|^2 \leqslant \|A\|_{\ell_2}^2 = p$ where $\|\cdot\|$ and $\|\cdot\|_{\ell_2}$ denote the spectral and Frobenius norm, respectively, we see that $$\|x\|_{\ell_2} = \|Az\|_{\ell_2} \leqslant \|A\|\|z\|_{\ell_2} \leqslant \sqrt{p}\|z\|_{\ell_1} \leqslant \sqrt{p}.$$ Thus $A(\Bcal_{\ell_1}) \subseteq \sqrt{p}B^d$. We now show $A(\Bcal_{\ell_1}) \supseteq rB^d$ for some $0 < r < R$. Note that this inclusion holds if and only if the support functions satisfy $h_{A(\Bcal_{\ell_1})} \geqslant h_{rB^d}$. Note that for any $\|u\|_{\ell_2} = 1$, we have that the support function of $A(\Bcal_{\ell_1})$ is given by \begin{align*}
    h_{A(\Bcal_{\ell_1})}(u) = h_{\Bcal_{\ell_1}}(A^Tu) = \|A^Tu\|_{\ell_{\infty}}.
\end{align*} But since $A \in \mathfrak{A}$, note that  \begin{align*}
    \min_{\|u\|_{\ell_2} = 1} \|A^Tu\|_{\ell_{\infty}} \geqslant \frac{1}{\sqrt{p}} \min_{\|u\|_{\ell_2} = 1} \|A^Tu\|_{\ell_2} = \frac{1}{\sqrt{p}} \cdot \sqrt{\sigma_{\min}(AA^T)} \geqslant \frac{ \eta}{\sqrt{p}}.
\end{align*} This guarantees $\eta/\sqrt{p}B^d\subseteq A(\Bcal_{\ell_1})$. 

Thus each $K \in \{A(\Bcal_{\ell_1}) : A \in \mathfrak{A}\}$ satisfies $\eta/\sqrt{p}B^d \subseteq K \subseteq \sqrt{p}B^d$. Hence the functional $\|x\|_K$ is Lipschitz over the collection $\Cfrak := \{A(\Bcal_{\ell_1}) : A \in \mathfrak{A}\}$. Moreover, the set $$\{A(\Bcal_{\ell_1}) : A \in \mathfrak{A}\} \subseteq \Pcal_p(\sqrt{p})$$ since each $A(\Bcal_{\ell_1})$ is a polytope with at most $p$ extreme points. Thus we achieve the same bound in Corollary \ref{cor:polytope-gen-bound} with $T = p$, $r = \eta/\sqrt{p}$,  and $R = \sqrt{p}.$ These bounds match previously achieved generalization bounds for dictionary learning \cite{Vainsencher_etal11, samp-comp-DL} up to logarithmic factors.

\subsection{Application: Semidefinite Regularizers} \label{sec:stat-learn-semidef}

Our final class of convex bodies consists of linear images of Schatten $p$-norm balls. Such classes have found wide interest in data science. For example, if one wants to learn an infinite collection of dictionary elements, a natural generalization  of dictionary learning would require learning a linear image of the nuclear norm ball \cite{SohChandrasekaran}. The set of dictionary elements in this case corresponds to the convex hull of the set of rank-$1$ matrices.

For a matrix $A \in \R^{q \times q}$, let $\sigma_i(A)$ denote the $i$-th singular value of $A$. For $p \in [1,\infty]$, we define the Schatten $p$-norm of $A$ as \begin{align*}
    \|A\|_{S_p} := \begin{cases}
    \left(\sum_{i=1}^{q} \sigma_i(A)^p\right)^{1/p} & \text{if}\ p < \infty, \\
    \sigma_{\max}(A) & \text{else.}
    \end{cases}
\end{align*} Let $\Bcal_{S_p} := \{A \in \R^{q \times q} : \|A\|_{S_p} \leqslant 1\}$ denote the Schatten $p$-norm ball and define the space of linear maps from $\R^{q \times q}$ to $\R^d$ as $L(\R^{q \times q} , \R^d)$. For a linear map $\Lcal \in L(\R^{q \times q} , \R^d)$, define the induced operator norm $\|\cdot\|_{S_p,2}$ as \begin{align*}
    \|\Lcal\|_{S_p,2} := \max_{Z \in \Bcal_{S_p}} \|\Lcal(Z)\|_{\ell_2}.
\end{align*} For a collection of linear maps $\mathfrak{L}_p \subset L(\R^{q \times q}, \R^d)$, let $\Cfrak_p$ denote the induced collection of convex bodies which are linear images of Schatten $p$-norm balls  \begin{align*}
    \Cfrak_p := \{\Lcal(\Bcal_{S_p}) : \Lcal \in \mathfrak{L}_p \subset L(\R^{q \times q}, \R^d)\}. 
\end{align*} We will prove the following result, which prescribes a set of conditions on the collection $\mathfrak{L}_p$ to obtain a generalization bound.

\begin{corollary}
Let $P$ be a distribution over $\R^d$ such that for $x \sim P$, $\|x\|_{\ell_2} \leqslant 1$ almost surely. For $p \geqslant 1$, let $\tilde{p}$ denote the conjugate\footnote{Recall that the conjugate $\tilde{p}$ of $p$ is $\tilde{p} = \infty$ if $p = 1$ and $1/p + 1/\tilde{p} = 1$ otherwise.} of $p$ and consider the set of bounded linear maps $$\mathfrak{L}_p(r,R) := \left\{\Lcal \in L(\R^{q \times q}, \R^d) : r \leqslant \min_{\|x\|_{\ell_2} = 1}\|\Lcal^Tx\|_{S_{\tilde{p}}} \leqslant \|\Lcal\|_{S_p,2} \leqslant R\right\}.$$ Then for any $\gamma \in (0,1)$, we have that with probability $1 - \gamma$ over { $m \gtrsim q^2 d$} samples drawn from $P$, the following bound holds for all $K \in \{\Lcal(\mathcal{B}_{S_p}) : \Lcal \in \mathfrak{L}_p(r,R)\}$,
{ 
\begin{align*}
    F(K; P) \lesssim F(K; P_m) + c_{P,m}\sqrt{\frac{q^2 d(\log(m) + \log(R/r))}{m}} + c_r \sqrt{\frac{\log(1/\gamma)}{m}}.
\end{align*}
}

\end{corollary}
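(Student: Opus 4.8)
The plan is to follow the template used for the ellipsoidal and polyhedral regularizers: verify the two geometric preconditions of Proposition~\ref{prop:lip-cont-radial}, transfer a volumetric covering bound on the parameter set $\mathfrak{L}_p(r,R)$ to a Hausdorff covering bound on the induced bodies $\Cfrak_p(r,R) := \{\Lcal(\Bcal_{S_p}) : \Lcal \in \mathfrak{L}_p(r,R)\}$, and then invoke Theorem~\ref{thm:unif-generalization-bound} with the explicit choice $\varepsilon = \sqrt{q^2 d/m}$.

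First I would show the sandwich $r B^d \subseteq \Lcal(\Bcal_{S_p}) \subseteq R B^d$ for every $\Lcal \in \mathfrak{L}_p(r,R)$. The outer inclusion is immediate from the definition of the induced operator norm: if $Z \in \Bcal_{S_p}$ then $\|\Lcal(Z)\|_{\ell_2} \leqslant \|\Lcal\|_{S_p,2} \leqslant R$. For the inner inclusion I would compute the support function; since $\langle\cdot,\cdot\rangle$ is self-dual on $\R^{q\times q}$ and the dual norm of $\|\cdot\|_{S_p}$ is $\|\cdot\|_{S_{\tilde p}}$,
\[
    h_{\Lcal(\Bcal_{S_p})}(x) = \sup_{Z \in \Bcal_{S_p}} \langle x, \Lcal(Z)\rangle = \sup_{Z \in \Bcal_{S_p}} \langle \Lcal^T x, Z\rangle = \|\Lcal^T x\|_{S_{\tilde p}},
\]
so $r B^d \subseteq \Lcal(\Bcal_{S_p})$ is equivalent to $h_{rB^d} \leqslant h_{\Lcal(\Bcal_{S_p})}$ on $\mathbb{S}^{d-1}$, i.e.\ to $r \leqslant \min_{\|x\|_{\ell_2}=1}\|\Lcal^T x\|_{S_{\tilde p}}$, which is exactly the lower constraint defining $\mathfrak{L}_p(r,R)$. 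In particular each $\Lcal(\Bcal_{S_p})$ is a genuine convex body (compact, convex, origin in its interior), so $\|\cdot\|_{\Lcal(\Bcal_{S_p})}$ is a well-defined gauge and Proposition~\ref{prop:lip-cont-radial} applies to $\Cfrak_p(r,R)$ with Hausdorff-Lipschitz constant $C_{\Cfrak} = R/r^3$ and inner radius $r_{\Cfrak} = r$.

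Next I would bound the Hausdorff covering numbers of $\Cfrak_p(r,R)$. Using the same support-function identity together with $d_H(A,B) = \|h_A - h_B\|_{\infty}$ for convex bodies and the reverse triangle inequality,
\begin{align*}
    d_H\big(\Lcal_1(\Bcal_{S_p}),\,\Lcal_2(\Bcal_{S_p})\big)
    &= \max_{\|x\|_{\ell_2}=1}\big|\,\|\Lcal_1^T x\|_{S_{\tilde p}} - \|\Lcal_2^T x\|_{S_{\tilde p}}\,\big| \\
    &\leqslant \max_{\|x\|_{\ell_2}=1}\|(\Lcal_1-\Lcal_2)^T x\|_{S_{\tilde p}}
    = \|\Lcal_1-\Lcal_2\|_{S_p,2},
\end{align*}
where the last equality is the standard adjoint-operator-norm duality $\|\Lcal\|_{S_p,2} = \max_{\|x\|_{\ell_2}=1}\|\Lcal^T x\|_{S_{\tilde p}}$. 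Hence any $\|\cdot\|_{S_p,2}$-net of $\mathfrak{L}_p(r,R)$ yields a Hausdorff net of $\Cfrak_p(r,R)$ of the same cardinality. Since $\mathfrak{L}_p(r,R)$ lies in the radius-$R$ ball of the $q^2 d$-dimensional normed space $\big(L(\R^{q\times q},\R^d),\|\cdot\|_{S_p,2}\big)$, the usual volumetric estimate gives $\log N(\mathfrak{L}_p(r,R),\|\cdot\|_{S_p,2},\varepsilon) \leqslant q^2 d\,\log(3R/\varepsilon)$, and therefore $\log N(\Cfrak_p(r,R), d_H, \varepsilon r^3/R) \leqslant q^2 d\,\log\!\big(3R^2/(\varepsilon r^3)\big)$.

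Finally I would feed $C_{\Cfrak} = R/r^3$, $r_{\Cfrak} = r$, and this covering bound into Theorem~\ref{thm:unif-generalization-bound} and set $\varepsilon = \sqrt{q^2 d/m}$, which lies in the admissible range once $m \gtrsim q^2 d$; collapsing $\log\!\big(3R^2\sqrt{m}/(r^3\sqrt{q^2 d})\big)$ into $\log m + \log(R/r)$ up to absolute constants and absorbing the residual $1/r$ factors into $c_r$ yields the stated inequality. I do not expect a deep obstacle here — the argument is structurally identical to the ellipsoidal and polyhedral cases — and the only step demanding genuine care is the dual-norm bookkeeping in the covering transfer: correctly identifying the dual of $\|\cdot\|_{S_p}$ with $\|\cdot\|_{S_{\tilde p}}$ (so that both the support-function formula and the inner inclusion line up with the definition of $\mathfrak{L}_p(r,R)$) and the adjoint-norm identity used to pass from $d_H$-distance of bodies to $\|\cdot\|_{S_p,2}$-distance of maps.
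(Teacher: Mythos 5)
Your proposal is correct and follows essentially the same route as the paper's proof: you establish the inclusions $rB^d \subseteq \Lcal(\Bcal_{S_p}) \subseteq RB^d$ via the support-function formula $h_{\Lcal(\Bcal_{S_p})}(x) = \|\Lcal^T x\|_{S_{\tilde p}}$, transfer the Hausdorff covering number of the induced bodies to a $\|\cdot\|_{S_p,2}$-covering number of the linear maps using the reverse triangle inequality and the adjoint-norm identity $\|\Lcal^T\|_{2,S_{\tilde p}} = \|\Lcal\|_{S_p,2}$, invoke the volumetric bound on the $q^2d$-dimensional space of maps, and then plug into Theorem~\ref{thm:unif-generalization-bound} with $\varepsilon = \sqrt{q^2d/m}$. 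The only cosmetic difference is that the paper writes the adjoint-norm duality out via an explicit chain of suprema rather than citing it, and passes through Proposition~\ref{prop:monotonicity-cov-num} for the covering-number monotonicity — neither affects the substance.
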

\begin{proof}
We first show that the collection $\Cfrak_p := \{\Lcal(\mathcal{B}_{S_p}) : \Lcal \in \mathfrak{L}_p(r,R)\}$ is bounded and has a sufficiently large inner width. Clearly for $K \in \Cfrak_p$, we have $K \subseteq RB^d$. Moreover, note that the support function $h_{\Lcal(\Bcal_{S_p})}(u) = \|\Lcal^Tu\|_{S_{\tilde{p}}}$. Thus, for $K \in \Cfrak_p$, the condition $rB^d \subseteq K$ is equivalent to \begin{align*}
     h_{rB^d}(u) \leqslant h_{\Lcal(\Bcal_{S_p})}(u)\ \forall \|u\|_{\ell_2} = 1
    \Longleftrightarrow r \leqslant \min_{\|u\|_{\ell_2} = 1} \|\Lcal^Tu\|_{S_{\tilde{p}}}.
\end{align*} By assumption, this inequality holds.

Now we need to bound the covering number of $\Cfrak_p$ in the Hausdorff metric. We claim such a bound can be obtained by covering the space $\mathfrak{L}_p$ in the $\|\cdot\|_{S_p,2}$ norm. Arguing similarly to proof of Corollary \ref{cor:ellipsoids-gen-bound}, we can show that for $\Lcal, \Acal \in \mathfrak{L}_p$, we have \begin{align*}
    d_H(\Lcal(\Bcal_{S_p}), \Acal(\Bcal_{S_p})) & \leqslant \max_{\|x\|_{\ell_2} = 1}\|(\Lcal^T-\Acal^T)x\|_{S_{\tilde{p}}}  =: \|\Lcal^T - \Acal^T\|_{2,S_{\tilde{p}}}.
\end{align*} But by duality, note that $\|X\|_{S_p} = \max\{|\langle X,Z\rangle| : \|Z\|_{S_{\tilde{p}}} \leqslant 1\}$ so that for any linear map $\Lcal \in L(\R^{q \times q}, \R^d)$, \begin{align*}
     \|\Lcal^T\|_{2,S_{\tilde{p}}} = \max_{\|x\|_{\ell_2}=1}  \|\Lcal^Tx\|_{S_{\tilde{p}}} =  \max_{\|x\|_{\ell_2} = 1}\max_{\|Z\|_{S_p} \leqslant 1} |\langle Z, \Lcal^Tx\rangle|
     & = \max_{\|x\|_{\ell_2} = 1}\max_{\|Z\|_{S_p} \leqslant 1} |\langle \Lcal(Z),x\rangle| \\
     & = \max_{\|Z\|_{S_p} \leqslant 1} \max_{\|x\|_{\ell_2} = 1} |\langle \Lcal(Z),x\rangle| \\
     & = \max_{\|Z\|_{S_p} \leqslant 1}  \|\Lcal(Z)\|_{\ell_2} \\
     & = \|\Lcal\|_{S_p,2}
\end{align*} which gives $d_H(\Lcal(\Bcal_{S_p}), \Acal(\Bcal_{S_p})) \leqslant  \|\Lcal - \Acal\|_{S_p,2}.$ It suffices to then bound the covering number of $\mathfrak{L}_p(R):=\{\Lcal \in L(\R^{q \times q}, \R^d) : \|\Lcal\|_{S_p,2} \leqslant R\}$. But note that $\mathfrak{L}_p(R)$ is isomorphic to the space of $d \times q^2$ matrices of bounded $\|\cdot\|_{S_p,2}$ norm. This gives the estimate \begin{align*}
     \log N(\Cfrak_p, d_H,\varepsilon) \leqslant \log N(\mathfrak{L}_p, \|\cdot\|_{S_p,2},\varepsilon) \leqslant \log N(\mathfrak{L}_p(R), \|\cdot\|_{S_p,2},\varepsilon/2) \lesssim q^2d \log\left(1 + \frac{2R}{\varepsilon}\right).
\end{align*} Bounding the $\varepsilon r^3/R$-covering number of $\Cfrak_p$ in the Hausdorff metric by that of $\mathfrak{L}_p(R)$ in the metric induced by the $\|\cdot\|_{S_p,2}$-norm and { setting $\varepsilon := \sqrt{q^2d / m}$} achieves the claimed bound.
\end{proof}

\section{Consequences Beyond Optimization} \label{sec:posterior-persp}
In this section, we briefly illustrate the consequences of our approach to regularizer selection for solution methods of inverse problems that do not rely on optimization but instead on the closely related topic of sampling.  In the preceding sections of this paper, we advocate for summarizing a data distribution $P$ via a star body regularizer $\|\cdot\|_K$.  As discussed in Section \ref{sec:MLE-criterion}, a regularizer $\|\cdot\|_K$ defined by the gauge of a star body $K \subseteq \R^d$ is optimal for a data distribution $P$ if the Gibbs density with energy $\|\cdot\|_K$ is the projection of $P$ onto the set of regularizer-induced Gibbs densities $\overline{\mathcal{D}}$.  With this notion of optimality, our results in Section~\ref{sec:opt_reg_convex} on conditions for a data distribution under which the optimal regularizer is convex have computational implications beyond optimization.

Concretely, in the Bayesian paradigm for solving inverse problems, one aims to reconstruct an object from corrupted measurements by sampling from a suitable posterior distribution.  When the prior is log-concave (i.e., when it is proportional to $\exp(-V)$ for a convex function $V$), the induced posterior distribution is also log-concave for many common likelihood models for the observed data (e.g., in linear inverse problems with likelihood functions given by exponential family models).  In such situations, we are faced with the task of sampling from a log-concave posterior.  The area of log-concave sampling is rich with algorithms and results guaranteeing convergence, for a wide range of smooth and non-smooth convex energies $V$ \cite{VempalaSurvey, RobertsTweedie1, RobertsTweedie2, LovaszVempala1, LovaszVempala2, Bubecketal18, Durmusetal19, Dwivedietal19}.  Consequently, the identification of log-concave Gibbs densities that can approximate a given prior distribution $P$ by obtaining convex regularizers corresponding to $P$ can yield a computationally tractable sampling-based approach for solving inverse problems.

We now describe this Bayesian perspective in more detail. Suppose we collect measurements $y \in \R^m$ of an underlying signal $x_* \in \R^d$, which are modeled via the likelihood function of $y$ given $x$ of the form $e^{-\Phi(x;y)}$ for a known map $\Phi$.  (For example, if $y$ and $x_*$ are related via $y = f(x_*) + \eta$ for some known forward model $f:\R^d \rightarrow \R^m$ and $\eta \sim \mathcal{N}(0,\sigma^2 I_m)$, then $\Phi(x;y) := (2\sigma^2)^{-1}\|y - f(x)\|_{\ell_2}^2$.)  From the Bayesian perspective \cite{Stuart-BIP}, one views $x_*$ as being drawn from a prior distribution $P$ and the quantity of interest for estimating $x_*$ is the posterior distribution of $x_*$ given $y$, which we denote by $\mu_P$:
\begin{align*}
    \mu_{P}(\mathrm{d}x) := \frac{1}{Z_P(y)} e^{-\Phi(x;y)} \mathrm{d}P(x)
\end{align*} where $Z_P(y) := \int_{\R^d} e^{-\Phi(x;y)} \mathrm{d}P(x)$ is the normalizing constant.

If instead of using the posterior $\mu_P$ we use the posterior $\mu_{p_K^*}$ induced by the moment projection $p_K^*$ of $P$ onto $\overline{\mathcal{D}}$, it is natural to ask what the implications might be in terms of reconstruction. Using recent results in the Bayesian inference literature, one can show that the distance from the posterior $\mu_P$ induced by the true prior $P$ to the induced posterior distribution $\mu_{p_K^*}$ is upper bounded by the distance between $P$ and the moment projection $p_K^*$. To state the result, recall that the total variation distance between two measures $P_1,P_2$ is given by $d_{TV}(P_1,P_2) := \sup_{A}|P_1(A) - P_2(A)|$ where the supremum is over all measurable subsets. \begin{proposition}
    Fix $y \in \R^m$. Consider two prior measures $P_1,P_2$ on $\R^d$ with $P_1 \ll P_2$ and log-likelihood function $\Phi(\cdot;y) : \R^d \rightarrow \R$ such that $Z_{P_i}(y):= \int_{\R^d}e^{-\Phi(x;y)}\mathrm{d}P_i(x) < \infty$ for each $i=1,2$. Defining the posterior measures $\mu_{P_i}(\mathrm{d}x) := Z_{P_i}(y)^{-1}e^{-\Phi(x;y)}\mathrm{d}P_i(x)$, we have that the following bounds hold: \begin{align*}
        d_{TV}^2(\mu_{P_1}, \mu_{P_2}) \leqslant \frac{2}{Z_{P_1}^2(y)}D_{\mathrm{KL}}(P_1 || P_2).
    \end{align*}
\end{proposition}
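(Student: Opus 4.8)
The plan is to combine Pinsker's inequality with an elementary estimate showing that the posterior-update map $P \mapsto \mu_P$ is Lipschitz in total variation. We may assume $D_{\mathrm{KL}}(P_1 \| P_2) < \infty$, since otherwise the claimed bound is vacuous; together with the hypothesis $P_1 \ll P_2$ this guarantees that both posteriors are absolutely continuous with respect to $\lambda := P_1 + P_2$. Writing $p_i := \mathrm{d}P_i/\mathrm{d}\lambda$, $g := e^{-\Phi(\cdot;y)}$, and $Z_i := Z_{P_i}(y) = \int g\, p_i\,\mathrm{d}\lambda$, the measure $\mu_{P_i}$ has $\lambda$-density $q_i := g\, p_i / Z_i$.

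The first step is to establish the Lipschitz estimate $d_{TV}(\mu_{P_1}, \mu_{P_2}) \le \frac{2}{Z_1}\, d_{TV}(P_1, P_2)$. Starting from $2\, d_{TV}(\mu_{P_1}, \mu_{P_2}) = \int |q_1 - q_2|\,\mathrm{d}\lambda$, I would add and subtract $g\, p_2 / Z_1$ and apply the triangle inequality, obtaining $|q_1 - q_2| \le \frac{g}{Z_1}|p_1 - p_2| + g\, p_2\,\big|\tfrac{1}{Z_1} - \tfrac{1}{Z_2}\big|$. Integrating, the second term contributes $\frac{|Z_1 - Z_2|}{Z_1}$ because $\int g\, p_2\,\mathrm{d}\lambda = Z_2$, and one then bounds $|Z_1 - Z_2| = \big|\int g\,(p_1 - p_2)\,\mathrm{d}\lambda\big| \le \int g\,|p_1 - p_2|\,\mathrm{d}\lambda$ using $g \ge 0$. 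This gives $2\, d_{TV}(\mu_{P_1}, \mu_{P_2}) \le \frac{2}{Z_1}\int g\,|p_1 - p_2|\,\mathrm{d}\lambda$, and finally, using $g = e^{-\Phi(\cdot;y)} \le 1$ (valid for the likelihood models under consideration, e.g.\ $\Phi(x;y) = (2\sigma^2)^{-1}\|y - f(x)\|_{\ell_2}^2 \ge 0$), the right-hand side is at most $\frac{2}{Z_1}\int |p_1 - p_2|\,\mathrm{d}\lambda = \frac{4}{Z_1}\, d_{TV}(P_1, P_2)$.

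The second step is to square this estimate and invoke Pinsker's inequality $d_{TV}(P_1, P_2)^2 \le \tfrac12 D_{\mathrm{KL}}(P_1 \| P_2)$, which yields $d_{TV}(\mu_{P_1}, \mu_{P_2})^2 \le \frac{4}{Z_1^2}\, d_{TV}(P_1, P_2)^2 \le \frac{2}{Z_1^2}\, D_{\mathrm{KL}}(P_1 \| P_2)$, exactly the claimed bound.

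The calculations are routine, so the only real subtlety to watch is avoiding circularity in the normalization term: it is essential that $|Z_1 - Z_2|$ be controlled by the \emph{same} quantity $\int g\,|p_1 - p_2|\,\mathrm{d}\lambda$ that bounds the numerator term, so that both pieces collapse to a multiple of $d_{TV}(P_1, P_2)$ with the clean constant $4$. The use of $e^{-\Phi(\cdot;y)} \le 1$ is the one place a structural assumption on the likelihood enters; it holds for all the likelihood models discussed above, and if one wishes to dispense with nonnegativity of $\Phi(\cdot;y)$ the argument still goes through with $\sup_x e^{-\Phi(x;y)}$ in place of $1$.
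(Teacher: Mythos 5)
Your proof is correct and follows the same two-step skeleton as the paper's: a total-variation Lipschitz estimate for the prior-to-posterior map, followed by Pinsker's inequality. The paper simply cites Theorem 8 of Sprungk (2020) for the Lipschitz estimate $d_{TV}(\mu_{P_1},\mu_{P_2}) \le \frac{2}{Z_1}\, d_{TV}(P_1,P_2)$, whereas you derive it from scratch via the triangle-inequality decomposition of $q_1 - q_2$ and the bound $|Z_1 - Z_2| \le \int g\,|p_1-p_2|\,\mathrm{d}\lambda$; the computation is clean and correct. You also correctly flag the one structural hypothesis that the paper leaves implicit (and that Sprungk's framework assumes): the step $g = e^{-\Phi(\cdot;y)} \le 1$ requires $\Phi(\cdot;y) \ge 0$. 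Without that, the constant $2/Z_1$ would need to be replaced by $2\sup_x e^{-\Phi(x;y)}/Z_1$, as you observe. So your argument is a self-contained, slightly more transparent version of the paper's proof, buying you independence from the cited reference at the cost of a few lines of integration.
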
 \begin{proof}
    This result follows from Theorem 8 in \cite{Sprungk20} along with an application of Pinsker's inequality \cite{Pinsker64}, which states that $d_{TV}(P_1,P_2) \leqslant \sqrt{D_{\mathrm{KL}}(P_1 || P_2)/2}$ for any two measures $P_1,P_2$.
\end{proof}
Hence, if the normalizing constants $Z_P(y), Z_{p_K}(y)$ are finite, the $m$-projection $p_K^*$ of $P$ onto $\overline{\mathcal{D}}$ satisfies:
\begin{align*}
    \min_{p_K \in \overline{\mathcal{D}}} d^2_{TV}(\mu_P, \mu_{p_K}) \leqslant \frac{2}{Z_P(y)^2} D_{\mathrm{KL}}(P || p_K^*).
\end{align*}
Thus, identifying an element $p_K^*$ of the class $\overline{\mathcal{D}}$ to best approximate a prior $P$ with a view to minimizing the total variation between the corresponding posteriors $\mu_{P}, \mu_{p_K^*}$ may be achieved by setting $p_K^*$ equal to the $m$-projection of $P$ onto $\overline{\mathcal{D}}$; in turn, this $m$-projection can be obtained by deriving the optimal regularizer for $P$.  If this optimal regularizer is convex so that $p_K^*$ is log-concave, then under appropriate conditions on the likelihood $e^{-\Phi(x;y)}$ the posterior $\mu_{p_K^*}$ is log-concave.  To summarize this line of reasoning, identifying an optimal regularizer for a data distribution $P$ and conditions under which it is convex yield methods for obtaining samples from a log-concave posterior whose distance to the true posterior is controlled in terms of the quality of the approximation of $P$.


\section{Conclusion and Discussion}

\label{sec:conclusions}

In this paper, we analyzed the question of determining the optimal regularizer for a given data distribution. We showed that when considering the class of continuous, positively homogeneous regularizers, this question is equivalent to solving a variational optimization problem over the space of star bodies. We showed that given a distribution of interest, there exists a unique star body of fixed volume that achieves the optimal risk by exploiting dual Brunn-Minkowski theory. The density of the distribution helps characterize the radial function of the optimal star body, which can have a wide array of geometries depending on the distribution under consideration. We further analyzed the convergence of empirical risk minimizers in the limit of infinite data and robustness of solutions to noisy perturbations. We then obtained generalization guarantees for various classes of star bodies and convex bodies, recovering known results while developing novel guarantees. There are many interesting directions for future work, and we describe a few here:

\paragraph{Performance of regularizers in subsequent tasks.} An important next step in the theory of optimal regularizers is to understand their performance in downstream tasks such as inverse problems. These types of questions are interesting from a generalization perspective, and would bring unique mathematical challenges. For example, previous works \cite{OymakHassibi, Oymaketal13} have established worst-case bounds on denoising performance using a convex regularizer, which depends on the statistical complexity of the regularizer's subdifferential at the point of interest. In our setting and the convex case, this would amount to analyzing the subdifferential of $\|\cdot\|_{K_*}$ the gauge induced by the optimal star body $K_*$. Additionally, this would also likely involve quantifying the Gaussian width or statistical dimension \cite{Chandrasekaranetal12a, ChandrasekaranJordan13, Amelunxenetal14} of the tangent cone of convex bodies at datapoints of interest, a quantity that has been well-studied in the context of inverse problems and statistical estimation tasks. Understanding what the analogous quantities would be in the general nonconvex star case could lead to new geometric questions regarding the boundary structure of such bodies. 

\paragraph{Approximation power of convex bodies.} The focus of our paper mainly lied in characterizing the optimal star body regularizer for a given distribution, but did not investigate questions related to modeling mismatch. It is of interest to investigate situations such as those discussed in Section \ref{sec:general-existence-results}, where the collection of sets we are searching over $\Cfrak$ does not contain the optimal regularizer $K_*$. For example, if the optimal regularizer $K_*$ for a given distribution is nonconvex, what is the best convex approximation to this regularizer? Along these lines, are there benefits to considering the best convex approximation to the optimal star body regularizer from a computational or analytic perspective? The answers to such questions will further advance our understanding of the powers and limitations of convexity.

\paragraph{Computing the optimal regularizer.}  
One interesting direction that stems from our work is to provide a practical method for learning optimal regularizers that are specified as the gauge of a star body for a given data set.  The dictionary learning problem provides a roadmap for describing structured families of convex regularizers that are extremely expressive while being amenable to training (via alternating minimization methods) -- these are regularizers specified as the gauge of some polytope parametrized as some linear image of the $\ell_1$-ball.  A natural family of \emph{nonconvex} regularizers that extends the ideas of dictionary learning are those specified as the gauge of unions of a \emph{few} convex sets, each of which could be, for instance, specified as some projection of the $\ell_1$-ball.  It would be interesting to develop numerical algorithms for learning regularizers with such structure. %

\paragraph{Incorporating computational considerations.}  An outstanding challenge in the field of inverse problems is to better understand how contemporary data-driven methods for learning regularizers perform, and explain why they seem to work well in practice.  An interesting direction based on our work is to view stylized instances of these practical methods as instances of our framework, but with the additional constraint that the star body arises from some computational model.  For instance, we may view the dictionary learning problem as an instance of our framework with the constraint that the star body is specified as the linear image of an $\ell_1$-ball.  It would be interesting to investigate the relationship between the optimal regularizers corresponding to the constrained (to being specified by some computational model) and unconstrained instances, and in the process, provide insight into the performance of practical methods for learning regularizers.

\section*{Acknowledgements}

We thank Shuhan Yang for notifying us of a sign error in a previous version of this manuscript. We also thank Andrew Stuart for helpful discussions. VC was supported in part by AFOSR grants FA9550-23-1-0204, FA9550-23-1-0070 and NSF grant DMS 2113724. YS acknowledges support from the Ministry of Education (Singapore) Academic Research Fund (Tier 1) R-146-000-329-133.

\bibliographystyle{plain}

\bibliography{main.bib}

\end{document}